\newtheorem*{rep@theorem}{\rep@title}
\newcommand{\newreptheorem}[2]{%
\newenvironment{rep#1}[1]{%
 \def\rep@title{#2 \ref{##1}}%
 \begin{rep@theorem}}%
 {\end{rep@theorem}}}
\newtheorem{theorem}{Theorem}[section]
\newtheorem*{theorem*}{Theorem}
\newtheorem{proposition}[theorem]{Proposition} 
\newtheorem{lemma}[theorem]{Lemma} 
\newtheorem{corollary}[theorem]{Corollary}
\newtheorem{conjecture}[theorem]{Conjecture}
\theoremstyle{remark}
\newtheorem{example}{Example}
\newtheorem*{example*}{Running Example}
\newtheorem*{example2*}{Example}
\theoremstyle{definition}
\newtheorem{definition}{Definition}
		  \def\command@factory#1{%
		    \expandafter\def\csname bf#1\endcsname{\mathbf{#1}}
		  }
		  \def\command@factory#1{%
		    \expandafter\def\csname bb#1\endcsname{\mathbbm{#1}}
		  }
		  \def\command@factory#1{%
		    \expandafter\def\csname c#1\endcsname{\mathcal{#1}}
		  }
		  \def\command@factory#1{%
		    \expandafter\def\csname scr#1\endcsname{\mathscr{#1}}
		  }
		  \def\command@factory#1{%
		    \expandafter\def\csname r#1\endcsname{\textsf{\upshape{#1}}}
		  }
	\newcommand\defn[1]{\textbf{\emph{#1}}} 
	\newcommand{\Iff}{if and only if }
	\newcommand{\wrt}{with respect to }
	\newcommand{\disjointUnion}{\bigsqcup}
	\newcommand{\smallDisjointUnion}{\sqcup}
	\newcommand{\isomorphic}{\cong}
	\newcommand{\activeElements}[2]{\Act_{#1}({#2})} 
	\newcommand{\bases}{\cB}
	\newcommand{\circuits}{\cC}
	\newcommand{\circuit}{C}
	\newcommand{\cocircuits}{\circuits^{*}}
	\newcommand{\cocircuit}{\circuit^{*}}
	\newcommand{\dualMatroid}{\cM^{*}}
	\newcommand{\independents}{\cI}
	\newcommand{\intActive}{\rI\rA}
	\newcommand{\intPassive}{\rI\rP}
	\newcommand{\intPerfectMatroids}{\cI\cP}
	\newcommand{\matroidComplex}{\Delta(\matroid)}
	\newcommand{\matroidDual}{\matroid^{*}}
	\newcommand{\matroid}{\cM}	
	\newcommand{\STA}{\textit{STA}}
	\newcommand{\uniformMatroid}[2]{\cU_{#1,#2}}
	\newcommand{\intOrder}{\preccurlyeq_{\Int}}
	\newcommand{\isCovered}{\triangleleft}
	\newcommand{\lessLex}{\prec_{\lex}}
	\newcommand{\poset}{\rP}
	\newcommand{\meet}{\wedge}
	\newcommand{\join}{\vee}
\DeclareMathOperator{\Act}{Act}
\DeclareMathOperator{\dom}{dom}
\DeclareMathOperator{\height}{ht}
\DeclareMathOperator{\IA}{IA}
\DeclareMathOperator{\Int}{int}
\DeclareMathOperator{\IP}{IP}
\DeclareMathOperator{\lex}{lex}
\DeclareMathOperator{\minBasis}{MinBas}
\DeclareMathOperator{\PE}{PE}
\DeclareMathOperator{\PL}{PL}
\DeclareMathOperator{\rank}{rank}
		\def\@BracContents{} 
		\newcommand{\@BracKern}{\kern-\nulldelimiterspace}
		\newcommand{\suchthat}{%
		   \nonscript\;
		   \ifnum\currentgrouptype=16
		     \middle|
		   \else
		     \@suchthat
		   \fi
		   \nonscript\;} 
		\newcommand{\@suchthat}{%
		   { 
		   \let\suchthat\@empty 
		   \left.\@BracKern 
		   \vphantom{\@BracContents} 
		   \middle| 
		   \right.\@BracKern 
		   } 
		 }
\title{Internally Perfect Matroids}
\author{Aaron Dall}
\thanks{This work was mostly completed during the author's doctoral studies at the Universitat Polit\`ecnica de Catalunya and was partially supported by the project MINECO MTM2012-30951/FEDER with additional support coming from the MCINN grant BES-2010-030080. The author wishes to thank Julian Pfeifle for many illuminating discussions and Raman Sanyal for helpful suggestions regarding Section \ref{section:internallyPerfectMatroids}.}
\email{aaronmdall@gmail.com}
\keywords{Stanley's Conjecture, $h$-vector, matroid, internal order, internally perfect basis, internally perfect matroid}		
\subjclass[2010]{Primary 05B35; Secondary 06B23.}	
\date{}          
\begin{document}
\begin{abstract}
In 1977 Stanley proved that the $h$-vector of a matroid is an $\mathcal{O}$-sequence and conjectured that it is a pure $\mathcal{O}$-sequence. 
In the subsequent years the validity of this conjecture has been shown for a variety of classes of matroids, though the general case is still open. 
In this paper we use Las Vergnas' internal order  to introduce a new class of matroids which we call internally perfect.
We prove that these matroids satisfy Stanley's Conjecture and compare them to other classes of matroids for which the conjecture is known to hold. 
We also prove that, up to a certain restriction on deletions, every minor of an internally perfect ordered matroid is internally perfect.
\end{abstract}

\maketitle


\section{Introduction}
\label{section:introduction}

An ordered matroid is a matroid $\matroid = (E,\bases)$ together with a linear ordering of the ground set $E$.
Given an ordered matroid $\matroid$ and a basis $B \in \bases(\matroid)$, an element $e \in B$ is internally passive (with respect to $B$) if there is a basis $B' \in \bases$ such that $B' = B - \{e\} \cup \{e'\}$ where $e'<e$ in the linear ordering of the ground set.
An element that is not internally passive is internally active.

The internal order of an ordered matroid is the poset~$\poset_{\Int}(\matroid) = (\bases \cup \hat{1}, \intOrder)$ on the set of bases of $\matroid$ together with an artifical maximal element $\hat{1}$, where~$B \intOrder B'$ \Iff every internally passive element of $B$ is internally passive in $B'$.
In~\cite{lasVergnas2001active}, Las Vergnas proves that the internal order of an ordered matroid is a graded lattice and that the height of a basis $B$ in~$\poset_{\intOrder}(\matroid)$ is given by the number of internally passive elements of $B$.
The unique minimal element of $\poset_{\intOrder}(\matroid)$ is the lexicographically smallest basis of $\matroid$ and is denoted $B_{0}(\matroid)$.

Given a basis $B$ of $\matroid$, the~$\STA$-decomposition of $B$ is the partition of $B$ into sets $S,T,$ and $A$ where 
	\begin{align*}
		S &= S(B) \text{ is the set of internally passive elements of $B$ not in $B_{0}(\matroid)$}, 
		\\T&=T(B) \text{ is the set of internally passive elements of $B$ in $B_{0}(\matroid)$, and} 
		\\A&=A(B) \text{ is the set of internally active elements.}
	\end{align*}
We typically express the $\STA$-decomposition of $B$ in the form $B=S^{T}_{A}$.
For each~$f \in S$, write $B_{f}$ for the lexicographically smallest basis in $\matroid$ containing~$f \cup T$ and write $T(B;f)$ for $T(B_{f})$.

Let $B = S^{T}_{A}$ be a basis of an ordered matroid $\matroid$.
Then  $B$ is  $\defn{internally deficient}$ if $T \neq \bigcup_{S}T(B;f)$.
If $T = \bigcup_{S}T(B;f)$ but the union is not disjoint, then $B$ is said to be \defn{internally abundant}.
If $T = \disjointUnion_{S}T(B;f)$ is a disjoint union, then $B$ is called \defn{internally perfect}.
The ordered matroid $\matroid$ is \defn{internally perfect} if every basis of $\matroid$ is internally perfect.

(We use the modifier ``internally'' to stress that our definitions depend on the notion of internal activity as opposed to the dual notion of external activity.
As we will have no cause to mention the external activity again in this paper, we typically drop the modifier when the additional stress is superfluous.)

Though the definitions of perfect, abundant, and deficient bases as stated above are useful in computations, they each have a more intuitive characterization in terms of the join operator of $\poset_{\Int}(\matroid)$; see Proposition \ref{proposition:joinsOfProjections}.
Other highlights of Section~\ref{section:perfectAbundantDeficient} include proofs of the existence of perfect bases in any ordered matroid and that all rank $2$ matroids are perfect (see Proposition \ref{proposition:perfectBasesExist}).
These preliminary results give way in later sections to our two main structural results concerning perfect matroids.

The first central result details to what extent minors of a perfect matroid $\matroid$ are perfect when the ordering of the ground set of each minor is induced from that of $\matroid$. 

	\newtheorem*{theorem:minorClosedIfDelOffB0}{Theorem \ref{theorem:minorClosedIfDelOffB0}}
	\begin{theorem:minorClosedIfDelOffB0}
	\emph{Let $\matroid = (E, \bases, \phi)$ be an internally perfect ordered matroid with initial basis $B_{0}$ and let $F_{1}$ and $F_{2}$ be disjoint subsets of $E$ such that any element of~$F_{2} \cap B_{0}$ is a coloop.
		Then the minor $\matroid /F_{1}\setminus F_{2}$ is internally perfect with respect to the ordering of its ground set induced by the order of $\matroid$.}
	\end{theorem:minorClosedIfDelOffB0}

The second of our main results pertains to a conjecture of R. Stanley concerning the structure of the $h$-vectors of matroids.
Indeed, this conjecture provided the original motivation for our study of internally perfect matroids.
As such we take a momentary diversion to sketch the background of the conjecture as well as the work done in recent years toward finding a proof.

The $h$-vector, $h(\matroid) = (h_{0}, h_{1}, \dots, h_{r})$, of a rank $r$ matroid $\matroid$ is defined to be the $h$-vector of the independence complex of $\matroid$, that is, the $h$-vector of the simplicial complex on $E$ consisting of all subsets $I \subseteq B$ for some $B \in \bases$.
Given such a matroid $\matroid$, the entry $h_{i}$ can be computed in a number of ways.
For example, it is the coefficient on $x^{r-i}$ in the evaluation of the Tutte polynomial,~$T_{\matroid}(x,y)$, of~$\matroid$ at $y=1$; see~\cite{brylawski1992tutte}.
Equivalently, for any linear ordering of $E$, the entry $h_{i}$ is the number of bases of $\matroid$ with $r-i$ internally active elements; see \cite{bjorner40homology}.
By a result of Las Vergnas, one can also obtain $h_{i}$ by counting the number of bases at height $i$ in $\poset_{\Int}(\matroid)$; see Theorem \ref{theorem:internalOrderFacts}.
Any of these results implies that $h_{0}=1$ and $h_{i}\ge 0$ for all $i \in [r]$.

While a result of Kruskal \cite{kruskal1963number} and Katona \cite{katona1987theorem} gives an explicit description of the possible vectors that can occur as the $h$-vector of simplicial complexes, no such description is yet known when restricting to the matroid complex case.
So when given a vector $\bfv \in \bbN^{r+1}$ it is natural to ask for tests that would verify (or rebuff)~$\bfv$ as the $h$-vector of a rank $r$ matroid.

In \cite{stanley1977cohen}, Stanley gives such a test by considering order ideals inside of the poset~$\poset = (\bbN^{r+1},\preccurlyeq_{\dom})$, for some $r\in \bbN$, ordered by $\bfv \preccurlyeq \bfw$ \Iff~$\bfv_{i} \le \bfw_{i}$ for all~$i\in [r+1]$.
An order ideal $\cO$ of $\poset$ is the downset in $\poset$ of a subset~$V \in \bbN^{r+1}$ and the~$\cO$-sequence $(\cO_{0}, \cO_{1}\dots,\cO_{r+1})$ of $\cO$ is the vector encoding the number of elements in $\cO$ with coordinate sum equal to $i$ ($i\in \{0,1,\dots, r+1\}$).
An $\cO$-sequence is called pure if it is the $\cO$-sequence of a pure order ideal, that is, of an order ideal whose maximal elements all have the same coordinate sum.
Stanley proved that the $h$-vector of any rank $r$ matroid is the $\cO$-sequence of an order ideal in $\poset$  and made the following conjecture:
	\begin{conjecture}[Stanley 1977]
	\label{conjecture:stanley}
			The $h$-vector of a matroid is a pure $\cO$-sequence.
	\end{conjecture}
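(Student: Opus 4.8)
The plan is to approach Conjecture~\ref{conjecture:stanley} through the combinatorics of the internal order; I note at the outset that this line of attack yields the conjecture for the subclass of internally perfect matroids together with a structural reduction toward the general case, rather than the full statement, which remains open. The starting point is Las Vergnas' interpretation (Theorem~\ref{theorem:internalOrderFacts}): for any linear ordering of the ground set, $h_i$ equals the number of bases at height $i$ in $\poset_{\Int}(\matroid)$, equivalently the number of bases with exactly $i$ internally passive elements. To certify $h(\matroid)$ as a pure $\cO$-sequence it therefore suffices to produce an injection $\Phi\colon\bases(\matroid)\to\poset=(\bbN^{r+1},\preccurlyeq_{\dom})$ with three properties: the coordinate sum of $\Phi(B)$ equals the number of internally passive elements of $B$; $\Phi$ reverses order from $\poset_{\Int}(\matroid)$ to $\poset$, so that its image is an order ideal; and every basis with no internally passive elements (there is at least one, namely the top of the internal order below $\hat{1}$) is sent to a monomial of coordinate sum $r$, so that the image is pure of degree $r$. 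First I would build $\Phi$ from the $\STA$-decomposition: to $B=S^{T}_{A}$ assign a monomial encoding $S(B)$ together with the partition $T(B)=\disjointUnion_{f\in S(B)}T(B;f)$, and then verify the three properties directly, using the join characterization of Proposition~\ref{proposition:joinsOfProjections} to make order-reversal transparent. Disjointness of the union is exactly what this uses, so this step proves Conjecture~\ref{conjecture:stanley} for every internally perfect ordered matroid.

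The second step is the reduction of an arbitrary matroid to the internally perfect case, and there are two natural routes. The first is to re-choose the linear order: one would try to show that every matroid admits \emph{some} ordering making it internally perfect, after which the construction above applies verbatim; here the minor-closure result Theorem~\ref{theorem:minorClosedIfDelOffB0} is the natural engine, since it lets one assemble a good ordering of $\matroid$ inductively from good orderings of $\matroid\setminus e$ and $\matroid/e$. The second route bypasses internal perfectness and inducts on $|E|$ directly: split off a non-coloop $e$, use the deletion--contraction recursion $h(\matroid)=h(\matroid\setminus e)+(\text{shift of }h(\matroid/e))$ coming from the Tutte-polynomial description of the $h$-vector, invoke Theorem~\ref{theorem:minorClosedIfDelOffB0} to ensure the inductive hypothesis applies to the two minors (choosing $e$ to respect the coloop restriction there), and amalgamate a pure order ideal for $\matroid\setminus e$ with a shifted pure order ideal for $\matroid/e$ into a single pure order ideal for $\matroid$. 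The base cases are in hand: rank at most $2$ by Proposition~\ref{proposition:perfectBasesExist}, and the uniform matroids $\uniformMatroid{k}{n}$ as a transparent model for the amalgamation.

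The hard part is precisely this second step, and I want to be candid that it is where the general conjecture is currently out of reach. An arbitrary matroid need not admit an internally perfect ordering --- internal abundance and internal deficiency are genuine obstructions, and non-internally-perfect orderings do occur --- so the first route is blocked at present; and in the second route the amalgamation is exactly where purity can fail, since the ``shift'' of one pure order ideal need not sit on top of another pure order ideal without an extra combinatorial device linking the two certificates. Accordingly, what we actually establish in this paper is Conjecture~\ref{conjecture:stanley} \emph{for internally perfect matroids}, via the construction of the first step (Section~\ref{section:internallyPerfectMatroids}), together with the structural Theorem~\ref{theorem:minorClosedIfDelOffB0} on which an eventual inductive proof would rely; settling the internally abundant and internally deficient cases is left as the central open problem.
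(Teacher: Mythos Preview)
The statement is a conjecture that the paper does not prove in full; the paper establishes it only for internally perfect matroids (Theorem~\ref{theorem:perfectImpliesStanley}). Your proposal correctly lands on this same partial result, and your first step is essentially the paper's construction of the map $\mu$ in Section~\ref{section:internallyPerfectMatroids}.

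However, your description of the internal order is upside down, and this makes the first step internally inconsistent as written. The unique basis with no internally passive elements is $B_0$, the \emph{minimum} of $\poset_{\Int}(\matroid)$, not a coatom below $\hat 1$; height in $\poset_{\Int}$ equals $|\IP(B)|$ (Theorem~\ref{theorem:internalOrderFacts}), so the coatoms are the bases with the \emph{most} internally passive elements, namely $r-c$ by Corollary~\ref{corollary:htOfIntOrder}. Your two requirements ``the coordinate sum of $\Phi(B)$ equals $|\IP(B)|$'' and ``every basis with no internally passive elements \dots\ is sent to a monomial of coordinate sum $r$'' contradict each other. The correct picture, as in the paper, is that $\mu$ is height-preserving (hence order-preserving, not reversing), $B_0$ maps to the origin, and purity follows because all coatoms map to vectors of coordinate sum $r-c$. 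A smaller point: the ambient monoid is $\cS\cong\bbN^{h_1}$ with one generator $\bfe_f$ for each $f\in E\setminus B_0$, not $\bbN^{r+1}$; this matters because the support of $\mu(B)$ is exactly $S(B)$, which is the key both to injectivity (Lemma~\ref{lemma:muIsInjective}) and to the order-ideal check via Proposition~\ref{proposition:characterizationsByCovers}.

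On your second step: Route~1 is not merely ``blocked at present'' but definitively closed---Example~\ref{example:notPerfectAnyOrder} exhibits a matroid that is internally perfect for no linear order of its ground set, so one cannot hope to reduce the general conjecture to the internally perfect case by reordering. Route~2 is reasonable speculation, but the amalgamation of two pure multicomplexes into one is precisely the well-known obstacle, and you supply no mechanism for it; Theorem~\ref{theorem:minorClosedIfDelOffB0} does not by itself furnish compatible certificates for $\matroid\setminus e$ and $\matroid/e$. Your honest conclusion---that only the internally perfect case is established here---matches the paper's scope exactly.
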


Following a 23-year period during which no partial results were published, the last fifteen years have seen a flurry of research concerning Stanley's Conjecture~\cite{constantinescu2012generic,constantinescu2012h,deloera2011h,klee2014lexicographic,lopez1997chip,merino2012structure,oh2013generalized,schweig2010h}.
These results are essentially of two types.
In each of~\cite{deloera2011h,lopez1997chip,merino2012structure,oh2013generalized,schweig2010h} a certain class of matroids is considered and Stanley's Conjecture is shown to hold by exploiting properties of the class.
In particular, Stanley's conjecture is known to hold for cographic matroids (Merino~\cite{lopez1997chip}), paving matroids~(De Loera et al.~\cite{deloera2011h}) and matroids with rank no more than four (Klee--Samper~\cite{klee2014lexicographic}).

In each of the other papers referenced above, general properties of either matroids or order ideals are studied and then used to prove Stanley's Conjecture for a particular class of matroids.
The second of our main results is of this type.
It states that, up to a relabeling of the nodes, the internal order of an internally perfect matroid is a pure order ideal.
	\newtheorem*{theorem:perfectImpliesStanley}{Theorem \ref{theorem:perfectImpliesStanley}}
	\begin{theorem:perfectImpliesStanley}
		The internal order of an internally perfect matroid is isomorphic to a pure order ideal.
	\end{theorem:perfectImpliesStanley}
In particular this result implies that if $\matroid=(E,\bases)$ is a matroid such that there exists a linear ordering of the $E$ making $\matroid$ an internally perfect matroid, then~$\matroid$ satisfies Stanley's Conjecture.

This article is organized as follows.
In Section~\ref{section:preliminaries} we fix notation and give the necessary background on matroids and the internal order.
In Section~\ref{section:perfectAbundantDeficient} we define and prove preliminary properties of perfect, abundant, and deficient bases of an order matroid.
We turn to the proof of Theorem \ref{theorem:minorClosedIfDelOffB0} concerning the minors of perfect matroids in Section \ref{section:minors}.
We also provide an example showing that the theorem is the best possible result when the linear order on the ground set of the minor is induced from that of the original matroid.
Moreover, we conjecture that one can always find a reordering of the ground set of a minor of perfect matroid such that the minor is perfect with respect to the new order.
In Section \ref{section:internallyPerfectMatroids} we prove Theorem \ref{theorem:perfectImpliesStanley} by explicitly giving the poset isomorphism between the internal order of a perfect matroid and a pure order ideal.
Finally, in Section \ref{section:examples} we construct a variety of examples of internally perfect matroids including an infinite family of cographic matroids as well as an example of an internally perfect matroid that is not contained in any of the classes for which Stanley's Conjecture is known to hold.


\section{Preliminaries: Matroids and the Internal Order}
\label{section:preliminaries}

	First we fix some notation.
	For a positive integer $n$ we write $[n]:=\{1,2,\dots, n\}$.
	We use standard basic set theory notation with the following exception: for a singleton $\{f\}$ we typically suppress the set braces  and simply write $f$.

	We now review some basic matroid terminology following \cite{oxley1992matroid}.
	Then we recall the facts we need concerning activities and the internal order essentially following~\cite{lasVergnas2001active}.
	We assume a basic familiarity with posets and simplicial complexes at the level of Chapter 3 of \cite{stanley2011enumerative} and Chapter 1 of \cite{miller2005combinatorial}, respectively.

	\subsection{Matroids}
	\label{subsection:matroids}

		A matroid $\matroid = (E, \bases)$ is a pair consisting of a finite set $E$ and a set of bases $\bases$ satisfying the following axioms:
			\begin{enumerate}
		        \item $\bases$ is a nonempty set, and
		        \item if $B_{1}$ and $B_{2}$ are in $\bases$ and $e \in B_{1} - B_{2}$, then there is an $f \in B_{2} - B_{1}$ such that $B_{1} - e \cup f$ is in $\bases$.
	      	\end{enumerate}
	    Let $\matroid = (E,\bases)$ be a matroid and let $F \subseteq E$.
	    Then $F$ is an \defn{independent set} of~$\matroid$ if it is a subset of some basis.
	    The set of all independent sets of $\matroid$ is denoted $\cI$.
	    A subset of $E$ that is not independent of $\matroid$ is a \defn{dependent set} and a dependent set that is minimal with respect to inclusion is a \defn{circuit}.
		The set of all circuits of $\matroid$ will be denoted $\circuits = \circuits(\matroid)$.
		A \defn{loop} of $\matroid$ is a circuit consisting of one element.
		We write $\cL(\matroid)$ for the set of all loops.
		If two elements $e,f \in E$ form a two-element circuit then they are said to be \defn{parallel}.		
		A maximal collection of elements of $E$ containing no loops such that the elements are pairwise parallel in~$\matroid$ is called a \defn{parallel class} of $\matroid$.
		
		The \defn{rank} of a subset $S \subseteq E$, denoted $\rank_{\matroid}(S)$, is the cardinality of any maximal independent set of $\matroid$ contained in $S$.
		It is easy to see from the definition that every basis of $\matroid$ has the same rank $r$, called the \defn{rank} of~$\matroid$ and written~$\rank (\matroid)$.
		When the matroid under consideration is clear from the context, we typically drop it from the notation.	

		The \defn{dual}  matroid, $\matroidDual = (E, \bases^{*})$, of $\matroid$ is the matroid whose bases are the complements of bases in $\matroid$.
		The bases of the dual matroid are called \defn{cobases}.
		More generally, we prepend the prefix ``co-'' to any object associated to a matroid to indicate that we are discussing the corresponding dual object.
		In particular, the cocircuits of $\matroid$ are the circuits of $\matroid^{*}$ and will be denoted $\cocircuits$.
		For example, a \defn{coloop} of $\matroid$ is a loop in the dual matroid $\matroidDual$. 
		Equivalently, a coloop is an element of the ground set that is in every basis of $\matroid$.

		Let~$\matroid = (E, \bases)$ be a matroid, $B$ be a basis of $\matroid$, and suppose~$e\in E$ is not an element of $B$.
		Then there is a unique circuit, $\circuit(B;e)$, of $\matroid$ contained in the set $B\cup e$ called the \defn{fundamental circuit} of $B$ with respect to $e$. 
		Similarly, for an element~$f \in B$ the \defn{fundamental cocircuit} of~$B$ with respect to~$f$ is the unique cocircuit~$\cocircuit(B;f)$ of $\matroid$ contained in the set $E \setminus B \cup f$.
		It is a basic fact that for $b \in B$ and~$b' \notin B$ the following are equivalent:
			\begin{enumerate}
				\item the set $B' := B - \{b\} \cup \{b'\}$ is a basis;
				\item $b \in \circuit(B;b')$; and
				\item $b \in \cocircuit(B';b')$.
			\end{enumerate}
		The set $B'$ above is said to be obtained from $B$ by \defn{pivoting}.

		Let $\matroid$ be a rank $r$ matroid.
		As the set $\independents$ of independent sets of $\matroid$ is nonempty and closed under taking subsets,  it forms a simplicial complex $\matroidComplex$ on $E$, called the \defn{matroid} (or \defn{independence}) \defn{complex} of $\matroid$.
		The dimension of the matroid complex $\matroidComplex$ is the rank of $\matroid$.
		The \defn{$f$-vector} and \defn{$h$-vector} of a matroid $\matroid$ are the $f$- and~$h$-vector of its matroid complex, respectively.

		Two useful constructions for creating new matroids are deletion and contraction defined, respectively, as follows.
		Let $T \subseteq E$.
		The \defn{deletion} of $\matroid$ at~$T$, written as~$\matroid \setminus T$, is the matroid whose independent sets are~$I \setminus T$ for each~$I \in \independents$, while the \defn{contraction} of $\matroid$ at $T$ is the matroid defined by~$\matroid / T := (\dualMatroid \setminus T)^{*}$.
		A \defn{minor} of $\matroid$ is any matroid that can be obtained from $\matroid$ by a sequence of deletions and contractions.

		Let $\matroid = (E,\bases)$ be a matroid with $|E|=n$.
		If $\phi: E \to [n]$ is a bijection, then the usual ordering on $[n]$ induces a linear order on $E$ where $e \prec_{\phi} f$ if $\phi(e)<\phi(f)$.
		The matroid $\matroid$ together with such a bijection $\phi$ is called an \defn{ordered matroid}.
		When the map $\phi$ is clear from the context, we write $e < f$ in place of $e \prec_{\phi} f$.

	\subsection{The Internal Order}
	\label{subsection:internalOrder}

		Let $\matroid = (E, \bases, \phi)$ be an ordered matroid, let $F \subseteq E$, and let $e \in E$.
		Then $e$ is $\matroid$-\defn{active} with respect to $F$ if there is a circuit $C \subseteq F \cup e$ of $\matroid$ such that $e = \min C$.
		Notice that this definition depends on the ordering induced by the bijection $\phi$.
		The set of all $\matroid$-active elements with respect to $F$ is denoted $\activeElements{\matroid}{F}$.		
		The element $e$ is \defn{internally active} in $\matroid$ with respect to $F$ if 
			\[e \in \intActive_{\matroid}(F) := \activeElements{\matroid^{*}}{E-F} \cap F.\]
		In other words, $e$ is internally active in $\matroid$ with respect to $F$ if $e \in F$ and there is a cocircuit $C^{*}$ of $\matroid$ contained in $E - F \cup e$ such that $e = \min C^{*}$.
		When the underlying ordered matroid is clear from the context we will simply say that $e$ is internally active in $F$. 
		Any element of $F$ that is not internally active with respect to $F$ is \defn{internally passive}.
		We write $\intPassive_{\matroid}(F)$ for the set of all internally passive elements of $F$.

		In this paper we typically focus on the case when $F = B$ is a basis of $\matroid$.
		Notice that in this case an element $f \in B$ is internally active in $B$ if and only if $f$ is the minimum element of the fundamental cocircuit~$\cocircuit(B;f)$.
		As~$\matroid$ is an ordered matroid, there is a lexicographically-smallest basis which we denote by $B_{0}$ and call the \defn{initial basis} of $\matroid$.
		It is trivial to check that $\intActive(B_{0}) = B_{0}$ and $\intPassive(B_{0})= \emptyset$.
		Now consider an arbitrary basis $B \in \bases$.
		By Corollary \ref{corollary:activeInB0} below, any internally active element of $B$ is also an element of the initial basis $B_{0}$, though the converse need not hold.
		On the other hand, if $e \in B - B_{0}$, then $e$ is internally passive in $B$.
		This motivates us to partition $B$ into three sets defined as follows:
				\begin{align}
				\label{eq:defineS}
				S &= S_{\matroid}(B) := \intPassive_{\matroid}(B) - B_{0};
				\\ \label{eq:defineT} T&=T_{\matroid}(B) := \intPassive_{\matroid}(B) \cap B_{0};
				\\ \label{eq:defineA}A&=A_{\matroid}(B) := \intActive_{\matroid}(B).
				\end{align}
		We call the elements of $S = S(B)$ \defn{perpetually passive} (with respect to $\matroid$ and~$B$), while 
		the elements of $T$ are called  \defn{provisionally passive}.
		The reason for this choice of nomenclature will become obvious in the next paragraph.
		We will often write $S^{T}_{A}$ for $B$ when we want to emphasize this partition of the elements of~$B$.
		The basis $B_{0}$ is the only basis with no internally passive elements.
		Let~$B = S^{T}_{A}$ be a basis.
		Then $B$ is \defn{clean} if it has no provisionally passive elements (i.e., $T=\emptyset$).
		The basis $B$ is called \defn{principal} if $|S|=1$, and is called \defn{$f$-principal} if $S = \{f\}$.

		If $B$ and $ B'$ are bases of $\matroid$ such that $B' = B - b \cup b'$ where $b' = \min C^{*}(B',b')$ is an internally active element of $B'$ and $b \in C^{*}(B',b') - b'$, then $B'$ is said to be obtained from $B$ by \defn{internally active pivoting} and we write $B' \longleftarrow^{b'}_{b} B$.
		So an internally active pivot exchanges an internally passive element of $B$ for an internally active element of $B'$.
		Let $B' \longleftarrow^{b'}_{b} B$ be an internally active pivot.
		It is trivial to see that if $e \in S(B) \cap B'$, then $e \in S(B')$. 
		Moreover, if $e \in S(B) \cap B''$ for any basis $B''$, then $e \in S(B'')$ which justifies our calling such an element perpetually passive.
		On the other hand, if $B'$ is obtained from $B$ by an internally active pivot and~$e \in T(B)\cap B'$, then~$e$ is either in $T(B')$  or it may become active in~$B'$.
		It is in this sense that elements of $T(B)$ are provisionally passive with respect to $B$.  
		Note that the only elements of $B_{0}(\matroid)$ that are not provisionally passive with respect to any basis $B$ are the least element of $E$ (with respect to the linear order on $E$) as well as every coloop of $\matroid$.

		Let $\prec$ be the binary relation on the bases of $\matroid$ defined by $B' \prec B$ \Iff $B'$ is obtained from $B$ by internally active pivoting.
		The relation $\prec$ is trivially irreflexive and asymmetric.
		Let $\intOrder$ be the transitive closure of $\prec$.
		\begin{theorem}[\cite{lasVergnas2001active} Proposition 5.2]
		\label{theorem:internalOrderIsAPoset}
			Let $B,B' \in \bases(\matroid)$ be bases of an ordered matroid $\matroid$.
			Then the following are equivalent:
				\begin{enumerate}[label=\upshape(\roman*)]
					\item $B \intOrder B'$;
					\item $\intPassive(B) \subseteq B'$;
					\item \label{eq:ipContainment} $\intPassive(B) \subseteq \intPassive(B')$;
					\item $B$ is the lexicographically-least basis of $\bases$ containing $B \cap B'$.
				\end{enumerate}
		\end{theorem}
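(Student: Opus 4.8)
The plan is to prove the cycle of implications (i) $\Rightarrow$ (iii) $\Rightarrow$ (ii) $\Rightarrow$ (iv) $\Rightarrow$ (i). The step (iii) $\Rightarrow$ (ii) is immediate since $\intPassive(B')\subseteq B'$ always holds, so the content lies in the other three. For (ii) $\Rightarrow$ (iv) I would first isolate the following characterization of lexicographically-least bases, useful in its own right: \emph{for a basis $B$ and a subset $X\subseteq B$, the basis $B$ is the lexicographically-least basis in $\bases$ containing $X$ if and only if every element of $B-X$ is internally active in $B$.} Each direction is a short exchange argument. If some $f\in B-X$ is internally passive, then $g:=\min\cocircuit(B;f)$ satisfies $g<f$ and $g\notin B$, so $B-f\cup g$ is a basis containing $X$ that is strictly smaller than $B$; conversely, if $\tilde B\supseteq X$ is a basis smaller than $B$ and $e:=\min(B\triangle\tilde B)\in\tilde B-B$, then the fundamental circuit $\circuit(B;e)$ is not contained in the independent set $\tilde B$, so it meets $B-\tilde B\subseteq B-X$ in some $f$, and then $e\in\cocircuit(B;f)$ with $e<f$ shows that $f$ is internally passive. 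Granting this, (ii) together with $\intPassive(B)\subseteq B$ gives $\intPassive(B)\subseteq B\cap B'$, hence $B-(B\cap B')\subseteq\intActive(B)$, and the characterization yields (iv) with $X=B\cap B'$.

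For (iv) $\Rightarrow$ (i) I would produce the required internally active pivots by a descent. Suppose $\hat B$ is any basis with $B\cap B'\subseteq\hat B$ and $\hat B\neq B$. Since $B$ is the \emph{unique} lexicographically-least basis containing $B\cap B'$, the characterization above forces some $y\in\hat B-(B\cap B')$ to be internally passive in $\hat B$; setting $x:=\min\cocircuit(\hat B;y)$, so that $x<y$, the basis $\hat B-y\cup x$ is obtained from $\hat B$ by an internally active pivot — one checks that the fundamental cocircuit of the element pivoted in is precisely $\cocircuit(\hat B;y)$, whose minimum is $x$ — while $\hat B-y\cup x$ still contains $B\cap B'$ and is lexicographically smaller than $\hat B$. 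Iterating, beginning with $\hat B=B'$, the lexicographic rank strictly decreases among the finitely many bases containing $B\cap B'$, so the procedure terminates; it can terminate only at $B$, and reading the sequence of pivots in reverse shows $B\intOrder B'$.

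The technically heaviest step is (i) $\Rightarrow$ (iii). As $\intOrder$ is the transitive closure of $\prec$, it suffices to treat one internally active pivot $B''=B-b\cup b'$, with $b'=\min\cocircuit(B'';b')$ internally active in $B''$ and $b\in\cocircuit(B'';b')-b'$, and to show $\intPassive(B'')\subseteq\intPassive(B)$. The crux is to track how fundamental cocircuits transform under the pivot: one verifies $\cocircuit(B'';b')=\cocircuit(B;b)$, so that $b'=\min\cocircuit(B;b)$, and that for $e\in B\cap B''$ one has $\cocircuit(B'';e)=\cocircuit(B;e)$ whenever $b'\notin\cocircuit(B;e)$, while if $b'\in\cocircuit(B;e)$ then cocircuit elimination applied to $\cocircuit(B;e)$ and $\cocircuit(B;b)$ at $b'$ (retaining $e$) identifies $\cocircuit(B'';e)$ with a cocircuit contained in $(\cocircuit(B;e)\cup\cocircuit(B;b))-b'$. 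Given $e\in\intPassive(B'')$ one has $e\neq b'$, hence $e\in B$, and assuming toward a contradiction that $e$ is internally active in $B$, i.e.\ $e=\min\cocircuit(B;e)$, each of the two cases forces $e=\min\cocircuit(B'';e)$ as well — in the second case because then $b'>e$ while every element of $\cocircuit(B;b)$ is at least $b'$ — contradicting $e\in\intPassive(B'')$. I expect this bookkeeping with the transformed fundamental cocircuits to be the main obstacle; everything else reduces to elementary exchange properties and the uniqueness of the lexicographically-least basis containing a prescribed set.
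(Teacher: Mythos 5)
The paper itself gives no proof of this statement: it is quoted from Las Vergnas (\cite{lasVergnas2001active}, Proposition~5.2), so there is nothing internal to compare your argument against. On its own merits your proposal is correct and self-contained. The cycle (i) $\Rightarrow$ (iii) $\Rightarrow$ (ii) $\Rightarrow$ (iv) $\Rightarrow$ (i) covers all four conditions; your auxiliary characterization (``$B$ is lex-least containing $X$ iff every element of $B-X$ is internally active'') is proved correctly in both directions by the two exchange arguments, and it does double duty in (ii) $\Rightarrow$ (iv) and in the descent for (iv) $\Rightarrow$ (i). The key identifications you rely on are sound: since $(E\setminus\hat B)\cup y=(E\setminus(\hat B-y\cup x))\cup x$ and a cobasis plus one element contains a unique cocircuit, the fundamental cocircuit is unchanged under the pivot, which is exactly what makes $\hat B-y\cup x$ an internally active pivot in the sense of the paper's definition; and in (i) $\Rightarrow$ (iii) the case split on whether $b'\in\cocircuit(B;e)$, with strong cocircuit elimination retaining $e$ and the bound $\min\cocircuit(B;b)=b'>e$, correctly forces $e=\min\cocircuit(B'';e)$, giving the contradiction. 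Two small points worth smoothing in a final write-up: the paper defines $\intOrder$ as the transitive closure of an irreflexive relation, so to close the cycle when $B=B'$ you should either take the reflexive-transitive closure or treat equality separately (your descent implicitly does the latter, terminating after zero pivots); and in (iv) $\Rightarrow$ (i) it is worth stating explicitly that lexicographic order is a total order on bases, so ``the lexicographically-least basis containing $B\cap B'$'' is unique, which is what licenses the conclusion that the descent can only halt at $B$.
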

		In particular, Theorem \ref{theorem:internalOrderIsAPoset} implies that the pair $\poset_{\Int}(\matroid):=(\bases, \intOrder)$ is a poset.
		The poset $\widehat{\poset}_{\Int}(\matroid) = (\bases \cup \hat{1}, \intOrder)$ where $\hat{1}$ is an artificial top element is the \defn{internal order} of the ordered matroid $\matroid$.
		As we shall see in the next example, the internal order of an ordered matroid depends on the particular choice of ordering of the ground set.


		\begin{example}
		\label{example:perfectNotPerfect}
		Consider the vector matroid $\matroid = \matroid(M)$ on $[6]$ given by the matrix 
\[M := \kbordermatrix{
        \empty & 1 & 2 & 3 & 4 & 5 & 6\cr
        \empty &
        1&
        0&
        0&
        0&
        1&
        0 \cr
        \empty &
        0&
        1&
        0&
        1&
        {-2}&
        1\cr
        \empty&
        0&
        0&
        1&
        0&
        1&
        1\\}
        .\]
The $h$-vector of $\matroid$ is $(1,3,5,5)$.
The natural ordering on the ground set yields an ordered matroid whose internal order is shown on the left in Figure \ref{figure:poset1}. 
In this internal order we have highlighted the three principal chains in $\poset_{\Int}$: the $5$-principal chain has length $2$, while the $6$- and $4$-principal chains have lengths $1$ and $0$ respectively.

Let~$\matroid(M')$ be the ordered matroid obtained by reordering the columns of $M$ using the order~$\{2,3,1,4,5,6\}$:
  \[M' := \kbordermatrix{
        \empty & 2 & 3 & 1 & 4 & 5 & 6 \cr
        \empty &
        0&
        0&
        1&
        0&
        1&
        0\cr
        \empty &
        1&
        0&
        0&
        1&
        {-2}&
        1\cr
        \empty &
        0&
        1&
        0&
        0&
        1&
        1\\
        }.\]
The internal order of this ordered matroid is shown on the right in Figure \ref{figure:poset1}, where we have highlighted the clean bases in red.
The two posets in Figure \ref{figure:poset1} are not isomorphic as there are two height-$3$ bases in the first poset that cover exactly one element while there is only one height-$3$ basis in the second poset that covers one element. 
Nonetheless, both internal orders have a number of features in common. 
For example, both are graded lattices with~$h_{i}(\matroid)$ bases at height $i$.
\qed

\begin{figure}[htbp]
  \centering 
  \begin{subfigure}[htbp]{0.35\textwidth}
        \centering
        \resizebox{\linewidth}{!}{
        \tikzstyle{every node}=[draw=black,thin, fill = white, rectangle,inner sep=2pt]
  \begin{tikzpicture}[scale=1, vertices/.style={draw, fill=black, circle, inner sep=0pt}]
               \node  (0) at (-0+0,0)		 	{$\emptyset^{}_{123}$};
               \node  (1) at (-1.5+1.5,1.33333)	{$\color{blue}{5}^{}_{12}$};
               \node  (2) at (-1.5+3,1.33333)	{$\color{blue}{6}^{}_{12}$};
               \node  (3) at (-1.5+0,1.33333)	{$\color{blue}{4}^{}_{13}$};
               \node  (4) at (-3+3,2.66667)	 	{$\color{blue}{5}^{3}_{1}$};
               \node  (8) at (-3+4.5,2.66667)	{${56}^{}_{1}$};
               \node  (5) at (-3+6,2.66667)	 	{$\color{blue}{6}^{3}_{1}$};
               \node  (6) at (-3+0,2.66667)		{${45}^{}_{1}$};
               \node  (7) at (-3+1.5,2.66667) 	{${46}^{}_{1}$};
               \node  (9) at (-3+3,4)		 	{$\color{blue}{5}^{23}_{}$};
               \node (10) at (-3+4.5,4)		 	{${56}^{2}$};
               \node (12) at (-3+6,4)		 	{${56}^{3}$};
               \node (11) at (-3+0,4)		 	{${45}^{3}$};
               \node (13) at (-3+1.5,4)		 	{${456}^{}$};
               \node (14) at (-0+0,5.33333)	 	{$\hat{1}$};
       \foreach \to/\from in {0/1, 0/2, 0/3, 1/8, 1/4, 1/6, 2/8, 2/5, 2/7, 3/6, 3/7, 4/12, 4/9, 4/11, 5/12, 6/13, 6/11, 7/13, 8/12, 8/13, 8/10, 9/14, 10/14, 11/14, 12/14, 13/14}
       \draw [-] (\to)--(\from);
       \foreach \to/\from in {1/4, 2/5, 4/9}
       \draw [blue,-] (\to)--(\from);
       \end{tikzpicture}
       }       
    \end{subfigure}
    \quad\quad  
  \begin{subfigure}[htbp]{0.35\textwidth}  
        \centering
        \resizebox{\linewidth}{!}{
        \tikzstyle{every node}=[draw=black,thin, fill = white, rectangle,inner sep=2pt]
        \begin{tikzpicture}[scale=1, vertices/.style={draw, fill=black, circle, inner sep=0pt}]        	
                \node (0) at (-0+0,0)	 		 {$\color{red}\emptyset^{}_{231}$};
                \node (1) at (-1.5+1.5,1.33333)	 {$\color{red}{5}_{23}$};
                \node (2) at (-1.5+3,1.33333)	 {$\color{red}{6}_{21}$};
                \node (3) at (-1.5+0,1.33333)	 {$\color{red}{4}_{31}$};
                \node (4) at (-3+3,2.66667)	 	 {${5}^{1}_{2}$};
                \node (5) at (-3+4.5,2.66667)	 {$\color{red}{56}_{2}$};
                \node (6) at (-3+6,2.66667)	 	 {${6}^{3}_{1}$};
                \node (8) at (-3+1.5,2.66667)	 {$\color{red}{46}_{3}$};
                \node (7) at (-3+0,2.66667)	 	 {$\color{red}{45}_{3}$};
                \node (9) at (-3+3,4)		 	 {${5}^{31}$};
                \node (12) at (-3+4.5,4)		 {${56}^{1}$};
                \node (10) at (-3+6,4)		 	 {${56}^{3}$};
                \node (13) at (-3+1.5,4)		 {$\color{red}{456}$};
                \node (11) at (-3+0,4)			 {${45}^{1}$};
                \node (14) at (-0+0,5.33333)	 {$\hat{1}$};
        \foreach \to/\from in {0/1, 0/2, 0/3, 1/4, 1/5, 1/7, 2/8, 2/5, 2/6, 3/8, 3/7, 4/12, 4/9, 4/11, 5/12, 5/13, 5/10, 6/10, 7/13, 7/11, 8/13, 9/14, 12/14, 10/14, 13/14, 11/14}
        \draw [-] (\to)--(\from);               
        \end{tikzpicture}
           }
	    \end{subfigure}
	    \caption{Non-isomorphic internal orders for the same matroid with different ground set orderings}
		\label{figure:poset1}  
		\end{figure}		
	\end{example}
		
		Next we give a structural result due to Las Vergnas for the internal order of an ordered matroid.	
		For this we need the following definition:
		Given an independent set $I$ of a rank~$r$ ordered matroid $\matroid$, the \defn{minimum basis} containing $I$, written~$\minBasis(I)$, is defined to be the lexicographically-least basis in $\bases$ containing~$I$.	
		\begin{theorem}[\cite{lasVergnas2001active}]
		\label{theorem:internalOrderFacts}
			Let $\matroid = (E,\bases,\phi)$ be an ordered rank-$r$ matroid.
			Then the internal order of $\matroid$, $\widehat{\poset}_{\Int} = (\bases \cup \hat{1}, \intOrder)$, is a graded lattice with height function given by $\height(B) = |\intPassive(B)|$ for all $B \in \bases$.
			The meet and join in $\widehat{\poset}_{\Int}$ of any two bases are given by 
				\begin{align*}
					B_{1} \meet B_{2} &= \minBasis(\intPassive(B_{1}) \cap \intPassive(B_{2}))
					\\ B_{1} \join B_{2} &= 
					\begin{cases}
							\minBasis(\intPassive(B_{1}) \cup \intPassive(B_{2})) \text{ if $\intPassive(B_{1}) \cup \intPassive(B_{2}) \in \cI(\matroid)$ and }
							\\ \hat{1} \text{ otherwise.}
					\end{cases}
				\end{align*}
		\end{theorem}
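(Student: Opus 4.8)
The plan is to deduce the entire statement from Theorem~\ref{theorem:internalOrderIsAPoset}. The key observation is that, by the equivalence (i)$\Leftrightarrow$(iii) there, the assignment $\iota\colon B\mapsto\intPassive(B)$ is an order isomorphism from $\poset_{\Int}(\matroid)$ onto the family $\mathcal J:=\{\intPassive(B):B\in\bases\}$ partially ordered by inclusion; in particular $\iota$ is injective, with inverse $\minBasis$ on $\mathcal J$. So the idea is to transport every claim about $\widehat{\poset}_{\Int}(\matroid)$ to a claim about inclusions among $\intPassive$-sets and translate back along $\minBasis$. Beyond Theorem~\ref{theorem:internalOrderIsAPoset} I will need only two elementary facts about $\minBasis$: (a) \emph{lex-monotonicity}: if $X\subseteq Y$ are independent then $\minBasis(X)\le_{\lex}\minBasis(Y)$, and $\minBasis(X)\le_{\lex}B$ for every basis $B\supseteq X$ (both immediate from ``$\minBasis$ is the lex-least basis containing its seed''); and (b) replacing one element $g$ of a basis $B$ by a strictly smaller element $f\notin B$ yields a lex-smaller basis (compare the two sorted sequences at $\min\{f,g\}=f$, the minimum of the symmetric difference).

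First I prove two lemmas. \emph{Lemma 1}: $\minBasis(\intPassive(B))=B$ for every basis $B$. Writing $B'':=\minBasis(\intPassive(B))$, the inclusion $\intPassive(B)\subseteq B''$ gives $B\intOrder B''$ by Theorem~\ref{theorem:internalOrderIsAPoset}; and since $\intPassive(B)\subseteq B''\cap B$, lex-monotonicity together with $\minBasis(B''\cap B)\le_{\lex}B''$ forces $\minBasis(B''\cap B)=B''$, hence $B''\intOrder B$ by Theorem~\ref{theorem:internalOrderIsAPoset}; antisymmetry finishes it. \emph{Lemma 2}: $\intPassive(\minBasis(X))\subseteq X$ for every independent set $X$. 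Indeed, if $g\in\intPassive(\minBasis(X))\setminus X$, the pivot witnessing the internal passivity of $g$ in $B:=\minBasis(X)$ replaces $g$ by some smaller $f\notin B$, producing by fact (b) a lex-smaller basis that still contains $X$ (as $g\notin X$), contradicting the definition of $\minBasis(X)$. From these I extract the workhorse: \emph{if $B_1,B$ are bases with $\intPassive(B_1)\subseteq B$, then $B_1\intOrder B$} --- for then $\intPassive(B_1)\subseteq B_1\cap B$, so lex-monotonicity and Lemma~1 give $B_1=\minBasis(\intPassive(B_1))=\minBasis(B_1\cap B)$, i.e.\ $B_1\intOrder B$ by Theorem~\ref{theorem:internalOrderIsAPoset}.

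Next, the lattice structure and the meet/join formulas. Fix bases $B_1,B_2$ and set $X=\intPassive(B_1)\cap\intPassive(B_2)$, $B=\minBasis(X)$. Since $X\subseteq B\cap B_i$, lex-monotonicity gives $B=\minBasis(B\cap B_i)$, hence $B\intOrder B_i$; and any common lower bound $B'$ satisfies $\intPassive(B')\subseteq X\subseteq B$, so $B'\intOrder B$ by the workhorse. Thus $B_1\meet B_2=\minBasis(\intPassive(B_1)\cap\intPassive(B_2))$. Dually, set $Y=\intPassive(B_1)\cup\intPassive(B_2)$. If $Y$ is dependent, no basis contains $Y$, yet any common upper bound $B'$ has $\intPassive(B_i)\subseteq\intPassive(B')\subseteq B'$ and hence $Y\subseteq B'$; so the only common upper bound is $\hat1$ and $B_1\join B_2=\hat1$. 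If $Y$ is independent, then $B:=\minBasis(Y)\supseteq Y\supseteq\intPassive(B_i)$, so $B_i\intOrder B$ by the workhorse, and any common upper bound $B'$ has $Y\subseteq B\cap B'$, whence $B=\minBasis(B\cap B')$ and $B\intOrder B'$; so $B=B_1\join B_2$. Together with the trivial cases involving $\hat1$, this shows $\widehat{\poset}_{\Int}(\matroid)$ is a lattice, bounded below by $B_0=\minBasis(\emptyset)$ (the unique element with $\intPassive=\emptyset$) and above by $\hat1$.

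Finally, gradedness and the height formula. Put $\rho(B)=|\intPassive(B)|$; strict monotonicity of $\rho$ is immediate from Theorem~\ref{theorem:internalOrderIsAPoset} and the injectivity of $\iota$. The crucial point is that $\rho$ increases by exactly $1$ across every cover $B\isCovered B'$. Suppose instead $|\intPassive(B')\setminus\intPassive(B)|\ge 2$. For each $e\in\intPassive(B')\setminus\intPassive(B)$, the basis $\minBasis(\intPassive(B)\cup\{e\})$ has, by Lemma~2, internally passive set contained in $\intPassive(B)\cup\{e\}$ and, by the workhorse, containing $\intPassive(B)$; so it equals $\intPassive(B)$ or $\intPassive(B)\cup\{e\}$. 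If the latter holds for some $e$, then since $|\intPassive(B')|\ge|\intPassive(B)|+2$ we obtain a basis strictly between $B$ and $B'$, contradicting the cover; so for every such $e$ it equals $\intPassive(B)$, which forces $e\in\intActive(B)\subseteq B$. But then $\intPassive(B')\subseteq B$, so $B'\intOrder B$ by the workhorse, contradicting $B\ne B'$. Hence $\rho$ is a rank function; since $\rho(B_0)=0$ and every basis maximal in $\intOrder$ has internally active set equal to the set of coloops (a non-coloop internally active element supports an upward internally active pivot), all maximal bases share the same $\rho$, so placing $\hat1$ one rank above them makes every maximal chain of $\widehat{\poset}_{\Int}(\matroid)$ equally long, and $\height(B)=\rho(B)=|\intPassive(B)|$. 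The main obstacle here is conceptual rather than computational: the substance lies in getting Lemmas~1 and~2 exactly right --- i.e.\ correctly pinning down the interplay between lexicographic order and the internal order through $\minBasis$ --- after which the meet/join formulas and the one-step verification for covers are short; the only remaining delicate point is the bookkeeping around $\hat1$ and the maximal bases in the grading step.
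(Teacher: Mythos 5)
The paper never proves this statement: it is imported verbatim from Las Vergnas (the citation \cite{lasVergnas2001active}), so there is no in-paper argument to measure you against, and what you have written is a genuine self-contained derivation from Theorem \ref{theorem:internalOrderIsAPoset} (itself also only cited). I checked it and it is correct. Your Lemma 1 ($B=\minBasis(\intPassive(B))$) and Lemma 2 ($\intPassive(\minBasis(X))\subseteq X$) are exactly the two facts the paper itself asserts without proof just before Proposition \ref{proposition:minBases} (``every element of $\minBasis(I)$ not in $I$ is internally active'' and ``$B=\minBasis(\IP(B))$''), and your lex-monotonicity and single-exchange arguments establish them properly. The meet/join verification is sound: the bound $\minBasis(X)\intOrder B_i$ via condition (iv), the use of (iii) to push common lower/upper bounds through, and the dependent-union case forcing $\hat{1}$ are all watertight. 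The cover-length argument (that $|\intPassive(B')\setminus\intPassive(B)|=1$ across a cover, via the dichotomy $\intPassive(B_e)\in\{\intPassive(B),\intPassive(B)\cup\{e\}\}$) is the real content you had to supply, and it works; the final step identifying the internally active set of a maximal basis with the set of coloops reproduces, in effect, the paper's own proof of Corollary \ref{corollary:htOfIntOrder}. Two small economies you could take: your ``workhorse'' is literally the implication (ii)$\Rightarrow$(i) of Theorem \ref{theorem:internalOrderIsAPoset}, so the detour through lex-monotonicity and Lemma 1 to re-derive it is harmless but unnecessary; and in the grading step you should say explicitly (one line) that coloops are internally active in every basis and that $B_0\intOrder B$ for all $B$ because $\intPassive(B_0)=\emptyset$ --- both are trivial but are the hinges on which ``all maximal chains have the same length'' turns.
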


		The following corollaries will be useful later and are direct consequences of Theorems \ref{theorem:internalOrderIsAPoset} and \ref{theorem:internalOrderFacts}.
		The first verifies our previous claim that internally active elements of a basis $B$ are always elements of the initial basis $B_{0}$, while the second computes the height of the internal order in terms of a matroid's rank and its number of coloops.
		\begin{corollary}
		\label{corollary:activeInB0}
			If $B$ is a basis of an ordered matroid $\matroid$ and if $e \in B$ is internally active, then $e$ is in the initial basis $B_{0}(\matroid)$ of $\matroid$, that is, $\IA(B) \subseteq B_{0}$ for all $B \in \bases(\matroid)$.
		\end{corollary}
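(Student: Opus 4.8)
I would prove the contrapositive: every $e \in B \setminus B_{0}$ is internally passive in $B$. The strategy is to play the lexicographic minimality of the initial basis $B_{0}$ against the orthogonality of circuits and cocircuits. So suppose, toward a contradiction, that $e \in B$ is internally active in $B$ but $e \notin B_{0}$. Since $e \notin B_{0}$, the fundamental circuit $C := \circuit(B_{0};e) \subseteq B_{0} \cup e$ is defined, and since $e \in B$ the fundamental cocircuit $C^{*} := \cocircuit(B;e) \subseteq (E \setminus B) \cup e$ is defined; both of these sets contain $e$.

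The first step is to record what minimality of $B_{0}$ gives us: every element of $C$ other than $e$ is strictly smaller than $e$, i.e. $e = \max C$. Indeed, if $g \in C$ with $e < g$, then $g \in B_{0}$ (as $g \in C \subseteq B_{0} \cup e$ and $g \neq e$), so by the pivoting equivalence in Section~\ref{subsection:matroids} the set $B_{0} - g \cup e$ is again a basis; but it is lexicographically smaller than $B_{0}$, contradicting the choice of $B_{0}$ as the initial basis. The second step is to use internal activity of $e$ in $B$, which by the remark following the definition means precisely that $e = \min \cocircuit(B;e)$; hence every element of $C^{*}$ other than $e$ is strictly larger than $e$. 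Now I invoke the orthogonality property of matroids: a circuit and a cocircuit cannot meet in exactly one element, so since $e \in C \cap C^{*}$ there is some $g \in C \cap C^{*}$ with $g \neq e$. From $g \in C$ and $e = \max C$ we get $g < e$; from $g \in C^{*}$ and $e = \min C^{*}$ we get $g > e$; contradiction. Therefore $e \in B_{0}$, and since $B$ was an arbitrary basis this gives $\IA(B) \subseteq B_{0}$ for all $B \in \bases(\matroid)$.

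The only point carrying genuine content is the first step, namely extracting ``$e = \max \circuit(B_{0};e)$ for every $e \notin B_{0}$'' from minimality of $B_{0}$; this is just the standard greedy characterization of the lexicographically least basis, so I do not expect a real obstacle there, nor in citing circuit--cocircuit orthogonality, which is basic matroid theory (cf.\ \cite{oxley1992matroid}). One could instead route the argument through the internal order: by Theorem~\ref{theorem:internalOrderFacts} one has $B \meet B_{0} = \minBasis(\intPassive(B) \cap \intPassive(B_{0})) = \minBasis(\emptyset) = B_{0}$, so $B_{0} \intOrder B$ and hence, by Theorem~\ref{theorem:internalOrderIsAPoset}\ref{eq:ipContainment} (or its clause~(iv)), $B_{0} = \minBasis(B_{0} \cap B)$; but the orthogonality argument above is shorter and self-contained, so that is the route I would take.
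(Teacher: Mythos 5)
Your proof is correct, but it takes a genuinely different route from the paper. You argue directly from first principles: lexicographic minimality of $B_{0}$ forces $e=\max \circuit(B_{0};e)$ for any $e\notin B_{0}$ (via the pivot $B_{0}-g\cup e$), internal activity gives $e=\min \cocircuit(B;e)$, and circuit--cocircuit orthogonality applied to $e\in \circuit(B_{0};e)\cap \cocircuit(B;e)$ yields the contradiction. The paper instead leans on Las Vergnas' structural results: since $\widehat{\poset}_{\Int}(\matroid)$ is a graded lattice with $B_{0}$ as its unique minimum (Theorem \ref{theorem:internalOrderFacts}), one takes a saturated chain from $B_{0}$ to $B$ and uses Theorem \ref{theorem:internalOrderIsAPoset}\ref{eq:ipContainment} to chain inclusions of active sets down the chain, ending with $\IA(B)\subseteq \IA(B_{0})=B_{0}$. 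Your argument buys independence from that machinery (it uses only the definition of internal activity, fundamental circuits/cocircuits, pivoting, and orthogonality, so it would stand even before Theorem \ref{theorem:internalOrderFacts} is invoked), while the paper's proof is shorter given that the lattice theorem is already imported and it makes the poset-theoretic content (activity is monotone along the internal order) explicit. One small caveat: your parenthetical alternative via the internal order is not actually complete as stated --- knowing $B_{0}\intOrder B$, equivalently $B_{0}=\minBasis(B_{0}\cap B)$, does not by itself deliver $\IA(B)\subseteq B_{0}$ without an additional step like the paper's chain argument --- but since you explicitly choose the orthogonality route as your proof, this does not affect correctness.
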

		\begin{proof}
			By Theorem \ref{theorem:internalOrderFacts} the internal order of a matroid is a graded lattice and $B_0$ is the unique minimal element, so for any basis $B$ in the matroid there is a saturated chain saturated chain $B_0 \intOrder B_{1} \intOrder \cdots \intOrder B_{i} \intOrder B$ from $B_0$ to B in the internal order.
			But then we can apply part \ref{eq:ipContainment} of Theorem \ref{theorem:internalOrderIsAPoset} which assures us that for any basis $B$ and any such saturated chain in the internal order, we have~$\IA(B) \subset \IA(B_{i}) \subset \dots \subset \IA(B_{1}) \subset \IA(B_0)$.
		\end{proof}

		\begin{corollary}[\cite{lasVergnas2001active}]
		\label{corollary:htOfIntOrder}
			Let $\matroid = (E,\bases,<)$ be a rank-$r$ ordered matroid with $c$ coloops.
			Then the height of $\poset_{\Int}(\matroid)$ is equal to $r-c$.
		\end{corollary}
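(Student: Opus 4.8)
The plan is to establish the two inequalities $\height(\poset_{\Int}(\matroid)) \le r-c$ and $\height(\poset_{\Int}(\matroid)) \ge r-c$. Since, by Theorem~\ref{theorem:internalOrderFacts}, $\poset_{\Int}(\matroid)$ is graded with height function $\height(B)=|\intPassive(B)|$ and minimum element $B_{0}$ (of height $0$), the height of the poset equals $\max_{B\in\bases}|\intPassive(B)|$, so it suffices to bound this maximum above and below by $r-c$.

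For the upper bound, the first step is to record that a coloop is internally active in every basis: if $e$ is a coloop then $\{e\}$ is a cocircuit of $\matroid$, and since every basis $B$ contains $e$, the fundamental cocircuit satisfies $\cocircuit(B;e)=\{e\}$, whence $e=\min\cocircuit(B;e)$ and thus $e\in\intActive(B)$. Consequently $\intPassive(B)\subseteq B\setminus\{\text{coloops of }\matroid\}$ for every basis $B$; as each basis has $r$ elements and contains all $c$ coloops, $|\intPassive(B)|\le r-c$.

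For the lower bound it is enough to exhibit one basis $B$ with $|\intPassive(B)|=r-c$, equivalently with $\intActive(B)$ equal to the set of coloops (the inclusion $\supseteq$ being automatic from the previous paragraph). I would take $B$ to be a basis that is maximal in the lexicographic order and show it has no internally active element besides the coloops. Suppose $f\in B$ is internally active and is not a coloop. Then $\cocircuit(B;f)\neq\{f\}$, since a one-element cocircuit would force $f$ to be a coloop, so choose $g\in\cocircuit(B;f)\setminus\{f\}$. Internal activity of $f$ means $f=\min\cocircuit(B;f)$, hence $g>f$; and by the standard description of a fundamental cocircuit---$\cocircuit(B;f)$ consists of $f$ together with all $g\notin B$ for which $B-f\cup g$ is a basis---the set $B-f\cup g$ is again a basis. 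But replacing the element $f$ of $B$ by the strictly larger element $g$ yields a lexicographically larger basis, contradicting the maximality of $B$. Therefore $\intActive(B)$ is precisely the set of coloops, and $|\intPassive(B)|=r-c$.

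Putting the two bounds together gives $\height(\poset_{\Int}(\matroid))=r-c$. The routine upper bound aside, the crux is the lower bound, and within it the claim that a lexicographically maximal basis admits no non-coloop internally active element; this in turn relies only on the elementary fact connecting $\cocircuit(B;f)$ to single-element basis exchanges and on the obvious monotonicity of the lexicographic order under replacing a basis element by a larger one. (Alternatively one could first reduce to the coloop-free case, observing that deleting the coloops of $\matroid$ leaves its internal order unchanged, and then show that the internal order of a coloop-free rank-$\rho$ ordered matroid has height $\rho$; but the argument above is more direct.)
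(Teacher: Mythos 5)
Your proof is correct and follows essentially the same route as the paper: the upper bound comes from coloops being internally active in every basis, and the lower bound from a maximal basis in which any non-coloop internally active element $f$ could be exchanged for a larger element of $\cocircuit(B;f)$, a contradiction. The only difference is that you contradict \emph{lexicographic} maximality (needing only the basic exchange property of fundamental cocircuits), whereas the paper takes a basis maximal in $\poset_{\Int}(\matroid)$ and contradicts its maximality via Theorems \ref{theorem:internalOrderIsAPoset} and \ref{theorem:internalOrderFacts}; this is a cosmetic variation, with yours being marginally more self-contained.
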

		\begin{proof}
			By Theorem \ref{theorem:internalOrderFacts}, the height of $\poset_{\Int}(\matroid)$ is the maximal value of $|\IP(B)|$ as~$B$ varies over $\bases(\matroid)$.
			If $e$ is a coloop of $\matroid$, then $e$ is in every basis of $\matroid$ and hence is internally active in every basis.
			Thus, the height of $\poset_{\Int}(\matroid)$ is no greater than $r-c$.
			Now let $B = S^{T}_{A}$ be a basis of $\matroid$ that is maximal in $\poset_{\Int}(\matroid)$ and suppose that there is an $e \in A$ that is not a coloop.
			Then $e = \min C^{*}(B;e)$ and there is an $f \in C^{*}(B;e)$ such that $f>e$.
			So the set $B' = B - e \cup f$ is a basis of~$\matroid$ and~$\IP(B)\subsetneq \IP(B')$.
			Thus, by Theorem \ref{theorem:internalOrderIsAPoset}, $B$ is strictly smaller than~$B'$ in~$\poset_{\Int}(\matroid)$, contradicting the fact that $B$ is maximal.
			So every basis that is maximal with respect to the internal order is of the form $B=S^{T}_{A}$ where $A$ is the set of coloops of $\matroid$, and the result follows.
		\end{proof}


\section{Internally Perfect, Abundant, and Deficient Bases}
\label{section:perfectAbundantDeficient}
	Throughout this section fix a loopless ordered matroid $\matroid = (E, \bases, \phi)$ of rank~$r$.
	Our goal is to define perfect, abundant, and deficient bases of $\matroid$.
	To this end, we begin by proving some preliminary results about minimal and principal bases.

	Recall that the minimal basis $\minBasis(I)$ of an independent set~$I$ of $\matroid$ is the lexicographically-least basis in $\bases$ containing~$I$.
	Equivalently, $\minBasis(I) = I \cup J$ where $J \subseteq B_{0}$ is the set of all minimal elements of cocircuits of $\matroid$ contained in~$E-I$.
	Thus every element of~$\minBasis(I)$ that is not in $I$ is internally active with respect to $\minBasis(I)$.

	Given a basis~$B = S^{T}_{A}$ of $\matroid$, the previous observation implies that if $I \subseteq S \cup T$ then~$\minBasis(I) \intOrder B$ with equality \Iff~$I = S \cup T$. 
	In particular, the basis $B$ is determined by its internally passive elements via $B = \minBasis(\IP(B))$.
	Equivalently,~$\minBasis(I)$ is the meet in the internal order of all bases containing~$I$.
	If $f$ is perpetually passive in $B$ (so that $f \in S$), then the basis~$B' = \minBasis(f \cup T)$ is a principal basis with $S(B') = \{f\}$.
	Moreover, as $B' \intOrder B$, we have~$T(B') \subseteq T$.	
	We record these facts in the following proposition.
	\begin{proposition}
		\label{proposition:minBases}
		Let $\matroid$ be an ordered matroid and let $B = S^{T}_{A}$ be a basis of $\matroid$.
		Then $\minBasis(S \cup T) = B$ and for any $f \in S$ the basis $B' = \minBasis(\{f\}\cup T)$ satisfies $S(B')=\{f\}$ and $T(B') \subseteq T(B)$.
	\end{proposition}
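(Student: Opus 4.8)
The plan is to unpack the two claims from the characterizations of the internal order already established, rather than to compute with activities directly.

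First I would prove $\minBasis(S \cup T) = B$. Recall that $S \cup T = \IP(B)$ by definitions \eqref{eq:defineS} and \eqref{eq:defineT}, so the claim is exactly $\minBasis(\IP(B)) = B$. Since $B$ is certainly a basis of $\matroid$ containing $\IP(B)$, we know $\minBasis(\IP(B)) \intOrder B$ by the observation preceding the proposition (any basis containing $I \subseteq S \cup T$ dominates $\minBasis(I)$; here $I = S\cup T$ itself). For the reverse, I invoke Theorem \ref{theorem:internalOrderIsAPoset}: the equivalence of (i) and (iv) says that $B \intOrder B'$ precisely when $B$ is the lexicographically-least basis containing $B \cap B'$. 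Applying the equivalence of (i) and (ii) with $B' := \minBasis(\IP(B))$, it suffices to check $\IP(B) \subseteq \minBasis(\IP(B))$, which holds by definition of $\minBasis$ as a basis \emph{containing} the given independent set. Hence $B \intOrder \minBasis(\IP(B))$, and combined with the previous inequality and antisymmetry of $\intOrder$ we get equality. (One should note at the outset that $S\cup T$ is independent, being a subset of the basis $B$, so $\minBasis(S\cup T)$ is well-defined.)

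Next I would handle the principal basis $B' = \minBasis(\{f\} \cup T)$ for $f \in S$. Again $\{f\} \cup T \subseteq \IP(B) \subseteq B$ is independent, so $B'$ is well-defined, and by the same observation $B' \intOrder B$. By Theorem \ref{theorem:internalOrderFacts} this yields $\IP(B') \subseteq \IP(B)$ (via part \ref{eq:ipContainment} of Theorem \ref{theorem:internalOrderIsAPoset}), and intersecting with $B_0$ gives $T(B') = \IP(B') \cap B_0 \subseteq \IP(B) \cap B_0 = T(B) = T$, which is the second assertion. For $S(B') = \{f\}$: the description of $\minBasis$ recalled at the start of the section says that every element of $\minBasis(I)$ not in $I$ is internally active with respect to $\minBasis(I)$; taking $I = \{f\}\cup T$, every element of $B' \setminus (\{f\}\cup T)$ lies in $A(B')$. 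Thus $\IP(B') \subseteq \{f\} \cup T$, so $S(B') = \IP(B') \setminus B_0 \subseteq (\{f\} \cup T)\setminus B_0 = \{f\}$ (using $T \subseteq B_0$ and $f \in S$, hence $f \notin B_0$). For the reverse containment, $f \in B - B_0$, and any element of a basis not in $B_0$ is internally passive in that basis (stated in the text just before \eqref{eq:defineS}); since also $f \notin B_0$, we get $f \in S(B')$, provided $f \in B'$ — but $f \in I \subseteq B'$ by construction. Hence $S(B') = \{f\}$, i.e.\ $B'$ is $f$-principal.

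I expect the only real subtlety is making sure each independence hypothesis needed to even form the various $\minBasis$'s is discharged, and correctly threading the logical chain in Theorem \ref{theorem:internalOrderIsAPoset}: the statement gives equivalences among (i)--(iv), so one must pick the convenient formulation at each step (the lex-least characterization to show $B \intOrder \minBasis(\IP(B))$, the $\IP$-containment to push domination down to $T$-sets). Neither of these is genuinely hard; the proposition is essentially a bookkeeping consequence of the structural theorems of Las Vergnas recalled above, and the main point is simply to record the two identities in a form convenient for the computations in later sections.
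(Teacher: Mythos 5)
Your argument is correct and follows essentially the same route as the paper, which proves this proposition via the discussion immediately preceding it: the recalled fact that every element of $\minBasis(I)$ outside $I$ is internally active, combined with the characterizations of $\intOrder$ in Theorem \ref{theorem:internalOrderIsAPoset}. You merely make the paper's informal bookkeeping explicit (two-sided comparison plus antisymmetry for $\minBasis(S\cup T)=B$, and intersecting the $\IP$-containment with $B_{0}$ for $T(B')\subseteq T(B)$), so no further comment is needed beyond noting that the citation of Theorem \ref{theorem:internalOrderFacts} for the $\IP$-containment should really just be Theorem \ref{theorem:internalOrderIsAPoset}\ref{eq:ipContainment}.
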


	We now prove two results concerning principal bases. 
	For the first, recall that a (finite) \defn{saturated chain} in a poset $\poset$ is a subposet whose ground set consists of elements $p_{1}, p_{2}, \dots, p_{k}$ such that $p_{i+1}$ covers $p_{i}$ for all $i \in [k-1]$.
	\begin{proposition}
		\label{proposition:principalBasesGiveChains}
		For any $f \in E - B_{0}(\matroid)$,  the set of $f$-principal bases forms a saturated chain.
	\end{proposition}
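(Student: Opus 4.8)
\emph{Proof proposal.} The plan is to identify the $f$-principal bases with certain single-element exchanges of the initial basis $B_{0}$, and then to read off the saturated chain from the identity $\height(B)=|\IP(B)|$ together with the rule $B\intOrder B'\iff\IP(B)\subseteq\IP(B')$ (Theorems~\ref{theorem:internalOrderIsAPoset} and~\ref{theorem:internalOrderFacts}).

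\emph{Step 1: the $f$-principal bases are exactly the bases $B_{0}-g+f$ with $g$ in the fundamental circuit $C(B_{0};f)$.} If $B=S^{T}_{A}$ is $f$-principal, then $B=\minBasis(\IP(B))=\minBasis(\{f\}\cup T)$, and since $\minBasis(I)=I\cup J$ with $J\subseteq B_{0}$, both $T$ and $J$ lie in $B_{0}$; as $f\notin B_{0}$ this forces $B\setminus B_{0}=\{f\}$, so $|B_{0}\setminus B|=1$, say $B_{0}\setminus B=\{g\}$, and then $B=B_{0}-g+f$ with $g\in C(B_{0};f)\cap B_{0}$. Conversely, for $g\in C(B_{0};f)\cap B_{0}$ the set $B:=B_{0}-g+f$ is a basis with $B\setminus B_{0}=\{f\}$, so $S(B)=\IP(B)\setminus B_{0}\subseteq\{f\}$; since $f\in B\setminus B_{0}$ is internally passive in $B$, we get $S(B)=\{f\}$. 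Write $C_{0}:=C(B_{0};f)\cap B_{0}$, which is nonempty because $\matroid$ is loopless.

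\emph{Step 2: computing the internally passive sets.} Fix $g\in C_{0}$, put $B:=B_{0}-g+f$, and take $h\in B_{0}\setminus g$. I would track the fundamental cocircuit of $h$ through the pivot $B_{0}\mapsto B$. If $h\notin C_{0}$, then by the orthogonality $h\in C(B_{0};f)\iff f\in C^{*}(B_{0};h)$ the cocircuit $C^{*}(B_{0};h)$ contains neither $f$ nor $g$, hence lies in $(E\setminus B)\cup h$; by uniqueness of fundamental cocircuits $C^{*}(B;h)=C^{*}(B_{0};h)$, whose minimum is $h$ because every element of $B_{0}$ is internally active in $B_{0}$, so $h$ is internally active in $B$. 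If $h\in C_{0}$, then $f$ lies in both $C^{*}(B_{0};h)$ and $C^{*}(B_{0};g)$ while $h$ lies only in the first, so cocircuit elimination produces a cocircuit $C''$ with $h\in C''\subseteq(C^{*}(B_{0};h)\cup C^{*}(B_{0};g))\setminus f\subseteq(E\setminus B)\cup h$, and hence $C^{*}(B;h)=C''$; moreover $g\in C^{*}(B;h)$, since $B-h+g=B_{0}-h+f$ is a basis. As $\min C^{*}(B_{0};h)=h$, $\min C^{*}(B_{0};g)=g$ and $f\notin B_{0}$, the minimum of $(C^{*}(B_{0};h)\cup C^{*}(B_{0};g))\setminus f$ equals $\min(g,h)$, which lies in $\{g,h\}\subseteq C^{*}(B;h)$; therefore $\min C^{*}(B;h)=\min(g,h)$, and $h$ is internally passive in $B$ exactly when $h>g$. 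Since $\IP(B)\cap B_{0}\subseteq B_{0}\setminus g$ and $f\in\IP(B)\setminus B_{0}$, this yields
\[\IP(B_{0}-g+f)=\{f\}\cup\{h\in C_{0}:h>g\}.\]

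\emph{Step 3: conclusion.} Writing $C_{0}=\{g_{1}<g_{2}<\dots<g_{k}\}$, Step~2 gives $\IP(B_{0}-g_{j}+f)=\{f\}\cup\{g_{j+1},\dots,g_{k}\}$, so the internally passive sets of the $f$-principal bases form the nested chain $\{f\}\subsetneq\{f,g_{k}\}\subsetneq\dots\subsetneq\{f,g_{2},\dots,g_{k}\}$ in which each term is obtained from the previous one by adjoining a single element. By the grading of $\widehat{\poset}_{\Int}(\matroid)$ and the containment rule for $\intOrder$, this means $B_{0}-g_{k}+f,\ B_{0}-g_{k-1}+f,\ \dots,\ B_{0}-g_{1}+f$ is a saturated chain, and by Step~1 it consists precisely of the $f$-principal bases. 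I expect the main obstacle to be Step~2 — pinning down $C^{*}(B;h)$ after the single pivot, and in particular verifying that $g\in C^{*}(B;h)$ exactly when $h\in C_{0}$; granting the displayed formula, the chain structure falls out immediately from the grading of the internal order.
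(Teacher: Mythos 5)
Your proof is correct and follows essentially the same route as the paper: identify the $f$-principal bases with the exchanges $B_{0}-e\cup f$ for $e\in C(B_{0};f)-f$, compute their internally passive sets, observe that these sets are nested and grow one element at a time, and conclude via the grading and the containment criterion for $\intOrder$. The only difference is that you verify in detail (via orthogonality and cocircuit elimination) the passivity criterion that the paper simply asserts, and your formula $\IP(B_{0}-g+f)=\{f\}\cup\{h\in C(B_{0};f)\cap B_{0}: h>g\}$ matches the paper's claim.
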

	\begin{proof}
		Write $C(B_{0};f)-f = (e_{1}, e_{2}, \cdots, e_{k})$ where $e_{i} < e_{i+1}$ with respect to the order on the ground set of $\matroid$.
		Then every $f$-principal basis of $\matroid$ is of the form~$B_{i} := B_{0} - e_{i} \cup f$ for some $i \in [k]$.
		Moreover, an element $g \in B_{i}$ is internally passive in $B_{i}$ \Iff $g>e_{i}$.
		It follows that $T(B_{i}) = T(B_{i+1}) \cup e_{i+1}$ for all~$1\le i \le k-1$. 
		Thus~{$\IP(B_{i+1})$ = $\IP(B_{i}) \cup e_{i+1}$}.
		This in turn implies that $B_{i+1}$ covers $B_{i}$ in~$\poset_{\Int}(\matroid)$ by Theorem \ref{theorem:internalOrderFacts}.
		Hence the set of $f$-principal bases forms a saturated chain.
	\end{proof}

	For a basis $B = S^{T}_{A}$ and an $f \in S$ we define the $\defn{$f$-part of T}$ to be the set~$T(B;f)$ consisting of provisionally passive elements of $B$ that are also provisionally passive in $\minBasis(f \cup T)$, that is, $T(B;f) := T(\minBasis(f \cup T))$.
	For a given basis $B$, one can read off the~$f$-part of $T$ easily from the internal order using the following proposition.
	\begin{proposition}
		\label{proposition:fPartIsMax}
		Let $B = S^{T}_{A}$ be a basis of an ordered matroid and let $f \in S$.
		Then $\minBasis(f \cup T)$ is the maximal $f$-principal basis that is less than (or equal to) $B$ in the internal order.
	\end{proposition}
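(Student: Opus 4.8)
The plan is to show two things: first, that $\minBasis(f \cup T)$ is an $f$-principal basis satisfying $\minBasis(f \cup T) \intOrder B$; and second, that it is the \emph{maximal} such basis in the internal order. For the first part, set $B' := \minBasis(f \cup T)$. Since $f \in S = S_{\matroid}(B)$, the element $f$ is perpetually passive, so by the remarks preceding Proposition \ref{proposition:minBases} (or by Proposition \ref{proposition:minBases} itself) we have $S(B') = \{f\}$, which is precisely the statement that $B'$ is $f$-principal. Moreover $f \cup T \subseteq S \cup T = \IP(B)$, and since $B = \minBasis(\IP(B))$ is the lexicographically-least basis containing its internally passive elements, the characterization in Theorem \ref{theorem:internalOrderIsAPoset}(iv) (applied with $B' \cap B \supseteq f \cup T$, together with the fact that $\minBasis$ of a subset is the meet of all bases containing it) gives $B' \intOrder B$. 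So $B'$ is an $f$-principal basis below $B$.

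For the second part, suppose $B''$ is any $f$-principal basis with $B'' \intOrder B$; I want to show $B'' \intOrder B'$. By Theorem \ref{theorem:internalOrderIsAPoset}(ii)--(iii), $B'' \intOrder B$ means $\IP(B'') \subseteq \IP(B) = S(B) \cup T(B)$. Now $B''$ being $f$-principal means $S(B'') = \{f\}$, so $\IP(B'') = \{f\} \cup T(B'')$; combined with the containment just noted, $T(B'') \subseteq T(B)$, hence $\IP(B'') = \{f\} \cup T(B'') \subseteq \{f\} \cup T(B)$. Since $B'' \intOrder B$ and $\IP(B'') \in \cI(\matroid)$ (it is a subset of the basis $B''$), the set $\{f\} \cup T(B)$ is independent as well — actually this is automatic since $f \cup T \subseteq \IP(B) \subseteq B$. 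Because $\IP(B'')$ is an independent set containing $f \cup T(B'')$, and $B' = \minBasis(f \cup T)$ is by definition the lex-least basis containing $f \cup T$, I need a containment going the other direction. Here is where I must be careful: I actually want $T(B'') \subseteq T(B') = T(B;f)$, i.e. that $B''$ sits below $B'$, not just below $B$.

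To close this, I would argue as follows. Since $B'$ and $B''$ are both $f$-principal bases, Proposition \ref{proposition:principalBasesGiveChains} tells us the set of all $f$-principal bases forms a saturated chain in $\poset_{\Int}(\matroid)$; in particular any two $f$-principal bases are comparable. So either $B'' \intOrder B'$ (done) or $B' \intOrder B''$. In the latter case, $\IP(B') \subseteq \IP(B'')$, i.e. $\{f\}\cup T(B') \subseteq \{f\} \cup T(B'')$, so $T(B') \subseteq T(B'')$. But also $B' = \minBasis(f \cup T)$ contains $T(B')$, and $T(B') = T(\minBasis(f\cup T))$ is by definition the set of provisionally passive elements of that minimal basis; combined with $B'' \intOrder B$ giving $T(B'') \subseteq T(B) = T$, we get $T(B') \subseteq T(B'') \subseteq T$, so $B''$ is a basis containing $f \cup T(B'')\supseteq$ ... — rather, the cleanest finish is: $B' \intOrder B''$ and $B'' \intOrder B$ would force $B' \intOrder B$ (which we have) but also that $B'$ is not maximal among $f$-principal bases below $B$ unless $B' = B''$; and $B'' \intOrder B$ with $B'$ defined as $\minBasis(f\cup T)$ means any $f$-principal $B''\intOrder B$ has $f \cup T(B'') \subseteq f\cup T$, so $B''$ is a basis lex-no-smaller than... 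I expect the genuinely delicate point to be pinning down that $\minBasis(f\cup T)$ is an \emph{upper} bound for the $f$-principal bases below $B$ rather than a lower bound — the resolution is exactly the chain structure from Proposition \ref{proposition:principalBasesGiveChains} together with the monotonicity $B''\intOrder B \Rightarrow T(B'')\subseteq T$, which forces $\IP(B'') \subseteq \IP(B') $ and hence $B'' \intOrder B'$ via Theorem \ref{theorem:internalOrderIsAPoset}. This comparability argument is the main obstacle, and once it is in place the maximality is immediate.
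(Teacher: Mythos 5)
Your overall plan is sound and the first half is fine: $\minBasis(f \cup T)$ is $f$-principal and lies below $B$ in the internal order, exactly as in Proposition \ref{proposition:minBases} and the remarks preceding it. The trouble is in the second half, at precisely the step you yourself flag as ``the main obstacle.'' You reduce everything to the assertion that $B'' \intOrder B$ (equivalently $T(B'') \subseteq T$) ``forces $\IP(B'') \subseteq \IP(B')$''; but by Theorem \ref{theorem:internalOrderIsAPoset} the containment $\IP(B'') \subseteq \IP(B')$ is just a restatement of $B'' \intOrder B'$, which is the maximality claim you are trying to prove, so this is asserted rather than derived. The appeal to Proposition \ref{proposition:principalBasesGiveChains} only buys you comparability of $B'$ and $B''$, and in the leftover case $B' \intOrder B''$ your argument trails off: from $T(B') \subseteq T(B'') \subseteq T$ alone you never conclude $B' = B''$ or reach a contradiction. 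So, as written, the proof has a genuine gap at its decisive point.

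The gap is small and closable with a tool you already had on the table: use the equivalence (i)$\Leftrightarrow$(ii) of Theorem \ref{theorem:internalOrderIsAPoset} (containment of $\IP(B'')$ in the \emph{set} $B'$) rather than (i)$\Leftrightarrow$(iii). Indeed, if $B''$ is $f$-principal with $B'' \intOrder B$, then $\IP(B'') = \{f\} \cup T(B'') \subseteq \{f\} \cup T \subseteq \minBasis(f \cup T) = B'$, whence $B'' \intOrder B'$ immediately; this also renders the chain structure of Proposition \ref{proposition:principalBasesGiveChains} unnecessary. For comparison, the paper closes the same point by contradiction and in a more hands-on way: it writes the $f$-principal bases explicitly as $B_{0} - e \cup f$ with $e \in C(B_{0};f)$ and shows that any $f$-principal basis strictly above $\minBasis(f \cup T)$ contains a provisionally passive element lying outside $T$ (and outside $S$, since it belongs to $B_{0}$), so its internally passive set is not contained in $\IP(B)$ and it cannot sit below $B$. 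Either finish is fine, but some such argument must actually appear; your draft stops just short of it.
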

	\begin{proof}
		Let $B_{1} = \minBasis(f \cup T)$ and suppose that $B_{2}$ is an $f$-principal basis such that $B_{1} \prec_{\Int} B_{2} \intOrder B$.
		Then, as in the proof of the previous proposition, there exist $e_{1}, e_{2} \in C(B_{0};f)$ such that 
		 $B_{i} = B_{0} - e_{i} \cup f$ and $e_{1}< e_{2}$.
		Since $e_{1}$ is not in~$B_{1}$ and $T$ is a subset of $B'$, $e_{1}$ is not an element of $T$.
		On the other hand, by the argument given in the previous proposition, $e_{1}$ is a provisionally passive element of $B_{2}$.
		So $e_{1} \in T(B_{2})$ and hence $\IP(B_{2}) \nsubseteq \IP(B)$.
		But then $B_{2}$ is not less than $B$ in the internal order by Theorem \ref{theorem:internalOrderIsAPoset}.
		This contradiction implies the maximality of $B_{1}$.
	\end{proof}

Given a basis $B-S^{T}_{A}$, it is natural to ask to what extent the union over $f \in S$ of the $f$-parts of $T$ cover $T$.
There are three possible answers to this question which lead us to the central definitions of this paper.
\begin{definition}
	Let $B=S^T_A$ be a basis of $\matroid$ and let $\widetilde{T}$ be the union of the $f$-parts of $T$ as $f$ runs over $S$. 
	Then $B$ is
		\begin{enumerate}
			\item (\defn{internally}) \defn{deficient} if $\widetilde{T}$ is a proper subset of $T$;
			\item (\defn{internally}) \defn{abundant} if $\widetilde{T} = T$ but $\widetilde{T}$ is not a disjoint union; and
			\item (\defn{internally}) \defn{perfect} if $\widetilde{T} = T$ and for all $f,g \in S$ with $f \neq g$ the set~$T(B;f)\cap T(B;g)$ is empty.
		\end{enumerate}
\end{definition}
	We write $\cD,\cA,$ and $ \cP$ for the set of deficient, abundant, and perfect bases of $\matroid$, respectively.
	Clearly, these sets partition the bases of $\matroid$, that is,~$\bases = \cD \smallDisjointUnion \cA \smallDisjointUnion \cP$. 
	Moreover, the set of perfect bases $\cP$ is never empty, as the next proposition shows.
	\begin{proposition}
		\label{proposition:perfectBasesExist}
		Let $\matroid$ be an ordered matroid. If $B = S^{T}_{A}$ is a basis of $\matroid$ with either $T = \emptyset$ or $|S| = 1$, then $B$ is perfect.
		If, in addition, $\matroid$ is a rank-$r$ matroid with $c$ coloops and $r-c=2$, then every basis of $\matroid$ is perfect.
	\end{proposition}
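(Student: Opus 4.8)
The plan is to prove the first assertion directly, by a short case analysis, and then obtain the second as an immediate consequence of the height bound on the internal order. The one structural fact I would record at the outset is that $S(B) = B \setminus B_{0}$ for every basis $B$: each element of $B \setminus B_{0}$ is internally passive (as noted in the preliminaries) and lies outside $B_{0}$, hence belongs to $S(B)$, while conversely $S(B) = \IP(B) \setminus B_{0} \subseteq B \setminus B_{0}$. In particular $S(B) = \emptyset$ precisely when $B = B_{0}$.

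For the first assertion I would split into the two hypothesized cases. If $T = \emptyset$, then for every $f \in S$ the $f$-part $T(B;f)$ is a subset of $T = \emptyset$, so $\widetilde{T} = \bigcup_{f \in S} T(B;f) = \emptyset = T$ and every intersection $T(B;f) \cap T(B;g)$ is empty; thus $B$ is perfect. If instead $|S| = 1$, say $S = \{f\}$, the crucial point is that $\minBasis(\{f\} \cup T) = \minBasis(S \cup T) = B$ by the first assertion of Proposition \ref{proposition:minBases}, so $T(B;f) = T(\minBasis(\{f\} \cup T)) = T(B) = T$. Hence $\widetilde{T} = T(B;f) = T$, and since $S$ contains no two distinct elements the disjointness condition is vacuous; so $B$ is perfect.

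For the second assertion, suppose $\matroid$ has rank $r$ and $c$ coloops with $r - c = 2$. By Corollary \ref{corollary:htOfIntOrder} the internal order $\poset_{\Int}(\matroid)$ has height $2$, and by Theorem \ref{theorem:internalOrderFacts} the height of a basis $B = S^{T}_{A}$ equals $|\IP(B)| = |S| + |T|$; thus $|S| + |T| \le 2$. If $B = B_{0}$ then $T = \emptyset$, so $B$ is perfect by the first assertion. If $B \neq B_{0}$, then $|S| \ge 1$ by the observation above, so either $|S| = 1$, or $|S| = 2$ and $|T| = 0$; in the former case $B$ is perfect because it is principal, in the latter because $T = \emptyset$. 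In all cases $B$ is perfect.

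I do not anticipate a real obstacle; the statement is a bookkeeping consequence of the results already assembled. The only delicate point is the $|S| = 1$ case, where one must avoid mistaking $\minBasis(\{f\} \cup T)$ for a principal basis strictly below $B$: the identity $\minBasis(S \cup T) = B$ from Proposition \ref{proposition:minBases} is exactly what identifies it with $B$ itself. It is also worth confirming that the case $|S| = 0$ with $T \neq \emptyset$ cannot occur, which is immediate from $S(B) = B \setminus B_{0}$ together with $T(B_{0}) = \emptyset$.
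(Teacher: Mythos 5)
Your proof is correct and follows essentially the same route as the paper: the $T=\emptyset$ case is trivial, the $|S|=1$ case reduces to the identity $B=\minBasis(\{f\}\cup T)$ (you cite Proposition \ref{proposition:minBases}, the paper cites Proposition \ref{proposition:fPartIsMax}, but the content is the same), and the rank-two-modulo-coloops case follows from the height bound in Corollary \ref{corollary:htOfIntOrder}. Your added remark that $S(B)=B\setminus B_{0}$, so $|S|=0$ forces $B=B_{0}$ and hence $T=\emptyset$, just makes explicit a detail the paper leaves implicit.
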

	\begin{proof}
		Let $B=S^{T}_{A}$ be a basis.
		If $T = \emptyset$, then $\widetilde{T} = \emptyset$ and so $T$ is trivially perfect.
		If $|S|=1$, then $B$ is a $f$-principal for some $f \in E - B_{0}$.
		In this case $B$ is perfect since $B = \minBasis(f \cup T)$ by Proposition \ref{proposition:fPartIsMax}.

		Now suppose $\matroid$ is a rank-$r$ ordered matroid with $c$ coloops such that $r-c=2$.
		Then, by Corollary \ref{corollary:htOfIntOrder}, the height of $\poset_{\Int}(\matroid)$ is two.
		Thus, if $B=S^{T}_{A}$ is a basis of $\matroid$, then $|S| \in \{0,1,2\}$.
		In any case, the previous paragraph implies that $B$ is perfect since either
		$|S|=1$, or~$|S|\in\{0,2\}$ in which case $T = \emptyset$.		
	\end{proof}

	In particular, the previous proposition tells us that if $\matroid$ is an unordered matroid of rank $2$, then $\matroid$ is internally perfect for any linear ordering of its ground set.
	This is far from the case for matroids of higher rank.
	In Example \ref{example:typesOfBases} we see that the matroid $\matroid$ of Example \ref{example:perfectNotPerfect} is not perfect with respect to the natural ordering of its ground set but is perfect with respect to the ordering $(2,3,1,4,5,6)$.
	Furthermore, in Example \ref{example:notPerfectAnyOrder} we supply a matroid that is not perfect with respect to any linear ordering of its ground set.

	\begin{example}
	\label{example:typesOfBases}
		Consider the two internal orders in Figure \ref{figure:poset1}.
		For the poset on the left one can check directly from the definitions that every basis is perfect except for~$B = 45^{1}$
		which has
		\[T(B;4) = T(B;5) = \emptyset \neq T = \{1\}.\]
		Thus $B$ is deficient with respect to the natural order on $[6]$.
		On the other hand, one can verify that every basis of the ordered matroid whose internal order is given by the poset on the right is perfect. \qed

	\begin{example}
    \label{example:notPerfectAnyOrder}
    	In this example we provide a matroid that is not internally perfect for any linear ordering of the ground set.
		Let $\matroid$ be the vector matroid of the columns of the matrix $M$ (over $\bbQ$) given by
		\[M :=
      \begin{pmatrix*}[r]
        1&  0&  0&  0&  0&  {-2}&  {-1}&  {1}\\
        0&  1&  0&  0&  0&  1&  1&  {1}\\
        0&  0&  1&  0&  0&  {-1}&  0&  {1}\\
        0&  0&  0&  1&  0&  {-2}&  0&  {1}\\
        0&  0&  0&  0&  1&  0&  0&  {1}\\
      \end{pmatrix*}.
		\]
		Then $\matroid$ is a rank-5 matroid with~42 bases and $h$-vector $(1,3,6,9,12,11)$.		
    	The internal order of $\matroid(M)$ (with respect to the natural order on the ground set) is given in Figure \ref{figure:r5n8} where the $f$-principal chains are colored green and the perfect (respectively abundant, deficient) bases are black (respectively blue, red).
    	For example, the  basis $B = 57^{24}_{0}$ is deficient because while $T=\{24\}$, the $f$-parts of $T$ are $T(B;5) = \emptyset$ and $T(B;7) = \{4\}$ and hence their union does not cover $T$.
    	On the other hand, the basis $B' = 57^{34}_{0}$ is abundant because the $f$-part of $T = \{34\}$ are $T(B';5) = \{3\}$ and $T(B';7) = \{3,4\}$, and their union covers $T$ but is not a disjoint union.

    	\begin{figure}[htbp]
        \centering
        \tikzstyle{every node}=[draw=black, thick, fill = white, rectangle,inner sep=2pt]
        \begin{tikzpicture}[scale=1, vertices/.style={draw, fill=black, circle, inner sep=0pt}]
               \node (0) at (-0+0,0)            {\scriptsize$\emptyset^{}_{01234}$};  
               \node (5) at (-5/4+0,8/5)        {\scriptsize${5}^{}_{0124}$};         
               \node (6) at (-5/4+5/4,8/5)      {\scriptsize${6}^{}_{0234}$};         
               \node (7) at (-5/4+5/2,8/5)      {\scriptsize${7}^{}_{0123}$};         
               \node (35) at (-25/8+0,16/5)     {\scriptsize${5}^{3}_{014}$};         
               \node (56) at (-25/8+5/4,16/5)   {\scriptsize${56}^{}_{024}$};         
               \node (57) at (-25/8+5/2,16/5)   {\scriptsize${57}^{}_{012}$};         
               \node (16) at (-25/8+15/4,16/5)  {\scriptsize${6}^{1}_{234}$};         
               \node (67) at (-25/8+5,16/5)     {\scriptsize${67}^{}_{023}$};         
               \node (47) at (-25/8+25/4,16/5)  {\scriptsize${7}^{4}_{012}$};         
               \node (235) at (-5+0,24/5)       {\scriptsize${5}^{23}_{04}$};         
               \node (156) at (-5+5/4,24/5)     {\scriptsize${56}^{1}_{24}$};         
               \node (356) at (-5+5/2,24/5)     {\scriptsize${56}^{3}_{04}$};         
               \node (567) at (-5+15/4,24/5)    {\scriptsize${567}^{}_{02}$};         
               \node (357) at (-5+5,24/5)       {\scriptsize${57}^{3}_{01}$};         
               \node (457) at (-5+25/4,24/5)    {\scriptsize${57}^{4}_{01}$};         
               \node (167) at (-5+15/2,24/5)    {\scriptsize${67}^{1}_{23}$};         
               \node (467) at (-5+35/4,24/5)    {\scriptsize${67}^{4}_{02}$};         
               \node (347) at (-5+10,24/5)      {\scriptsize${7}^{34}_{01}$};                
               \node (1235) at (-55/8+0,32/5)   {\scriptsize${5}^{123}_{4}$};         
               \node (1356) at (-55/8+5/4,32/5) {\scriptsize${56}^{13}_{4}$};         
               \node (2356) at (-55/8+5/2,32/5) {\scriptsize${56}^{23}_{4}$};         
               \node (1567) at (-55/8+15/4,32/5){\scriptsize${567}^{1}_{2}$};         
               \node (3567) at (-55/8+5,32/5)   {\scriptsize${567}^{3}_{0}$};         
               \node (4567) at (-55/8+25/4,32/5){\scriptsize${567}^{4}_{0}$};         
               \node (2357) at (-55/8+15/2,32/5){\scriptsize${57}^{23}_{0}$};         
               \node (2457) at (-55/8+35/4,32/5){\scriptsize$\color{red}{57}^{24}_{0}$};          
               \node (3457) at (-55/8+10,32/5)  {\scriptsize$\color{blue}{57}^{34}_{0}$};        
               \node (1467) at (-55/8+45/4,32/5){\scriptsize${67}^{14}_{2}$};         
               \node (3467) at (-55/8+25/2,32/5){\scriptsize${67}^{34}_{0}$};         
               \node (2347) at (-55/8+55/4,32/5){\scriptsize${7}^{234}_{0}$};         
               \node (13567) at (-25/4+0,8)     {\scriptsize${567}^{13}_{}$};         
               \node (14567) at (-25/4+5/4,8)   {\scriptsize${567}^{14}_{}$};         
               \node (23567) at (-25/4+5/2,8)   {\scriptsize${567}^{23}_{}$};         
               \node (24567) at (-25/4+15/4,8)  {\scriptsize$\color{red}{567}^{24}_{}$};          
               \node (34567) at (-25/4+5,8)     {\scriptsize$\color{blue}{567}^{34}_{}$};        
               \node (12357) at (-25/4+25/4,8)  {\scriptsize${57}^{123}_{}$};         
               \node (13457) at (-25/4+15/2,8)  {\scriptsize$\color{red}{57}^{134}_{}$};          
               \node (23457) at (-25/4+35/4,8)  {\scriptsize$\color{blue}{57}^{234}_{}$};        
               \node (13467) at (-25/4+10,8)    {\scriptsize${67}^{134}_{}$};         
               \node (23467) at (-25/4+45/4,8)  {\scriptsize${67}^{234}_{}$};         
               \node (12347) at (-25/4+25/2,8)  {\scriptsize${7}^{1234}_{}$};         
       \foreach \to/\from in {0/5, 0/6, 0/7, 5/35, 5/56, 5/57, 6/56, 6/16, 6/67, 7/57, 7/67, 7/47,35/235, 35/356, 35/357, 56/156, 56/356, 56/567, 57/567, 57/357, 57/457, 16/156, 16/167, 67/567, 67/167, 67/467, 47/457, 47/467, 47/347,235/1235, 235/2356, 235/2357, 156/1356, 156/1567, 356/1356, 356/2356, 356/3567, 567/1567, 567/3567, 567/4567, 357/3567, 357/2357, 357/3457, 457/4567, 457/2457, 457/3457, 167/1567, 167/1467, 467/4567, 467/1467, 467/3467, 347/3457, 347/3467, 347/2347,1235/12357, 1356/13567, 2356/23567, 1567/13567, 1567/14567, 3567/13567, 3567/23567, 3567/34567, 4567/14567, 4567/24567, 4567/34567, 2357/23567, 2357/12357, 2357/23457, 2457/24567, 2457/23457, 3457/34567, 3457/13457, 3457/23457, 1467/14567, 1467/13467, 3467/34567, 3467/13467, 3467/23467, 2347/23457, 2347/23467, 2347/12347}
       \draw [-] (\to)--(\from);
       \foreach \to/\from in {5/35, 6/16, 7/47, 35/235, 47/347, 235/1235, 347/2347, 2347/12347}
       \draw [-, green] (\to)--(\from);
       \end{tikzpicture}
    
    	\caption{{Perfect}, {\color{blue} abundant}, and {\color{red}deficient} bases of a rank-$5$ matroid on $8$ elements}
        \label{figure:r5n8}
    	\end{figure}
    \end{example}
    	Using, for example, the \texttt{Macaulay2} package \texttt{Posets} (see \cite{M2}), one can perform a brute force computation to show that none of the $8!$ linear orders on the ground set of $\matroid$ yields a perfect ordered matroid.
    
    	We now use this example to make some observations in order to motivate upcoming results. 
    	First notice that~if $B=S^{T}_{A}$ is a perfect basis of $\matroid$, then $B$ covers exactly $|S|$ bases in $\poset_{\Int}$ and that $B$ can be expressed as the join of principal bases in a unique way.
    	For example, the perfect basis $567^{13}$ covers the bases $56^{13}_{4}, 567^{1}_{2},$ and~$567^{3}_{0}$ and can only be written as the join of the principal bases as follows: 
    		\begin{align*}
    			B &= \bigvee_{f\in S} \minBasis(f \cup T)
    			\\&= 5^{3}_{014} \vee 6^{1}_{234} \vee 7_{0123}.
    		\end{align*}
    	Also notice that if $B$ is a perfect basis and $B' \intOrder B$, then $B'$ is perfect.

    	When $B$ is abundant, it covers more than $|S|$ bases and can be expressed as the join of $f$-principal bases in a number of ways.
    	For example, the basis $57^{34}_{0}$ covers~$3$ bases and can be written as such a join in three distinct ways:
    		\[B \hspace{11pt}=\hspace{11pt} 5^{3}_{014} \vee 7^{34}_{01}
    			\hspace{11pt}=\hspace{11pt}5^{3}_{014} \vee 7^{4}_{012}
    			\hspace{11pt}=\hspace{11pt}5_{0124} \vee 7^{34}_{01}.\]
    	
    	Finally, notice that any deficient basis of $\matroid$ cannot be expressed as the join of~$f$-principal bases. 
    	\qed
	\end{example}

	We now proceed to prove that the properties verified in the previous example hold in general.
	We begin by characterizing the three types of bases in terms of the interplay between principal bases and the join operator in the internal order.
	For a basis $B = S^{T}_{A}$ of an ordered matroid $\matroid$ let
			\begin{equation}
				\label{equation:decomposeIntoPrincipal}
				B' := \bigvee_{f \in S} \minBasis(f \cup T).
			\end{equation}
	Let us call $B$ \defn{decomposable} if $B = B'$, and \defn{undecomposable} otherwise.
	Moreover, call $B$ \defn{uniquely decomposable} if $B$ is decomposable and \eqref{equation:decomposeIntoPrincipal} is the unique way to write $B$ as the join of $f$-principal bases for $f\in S$, and \defn{multi-decomposable} otherwise. 
	Since every basis of $\matroid$ is of exactly one of these three types, we obtain a partition of $\bases(\matroid)$ which we call the \defn{decomposability partition}.
	With this terminology in hand we can now state our characterizations of perfect, abundant, and deficient bases.
	\begin{proposition}
	\label{proposition:joinsOfProjections}
		Let $B$ be a basis of an ordered matroid $\matroid$.
		Then $B$ is perfect (abundant, deficient) if and only if $B$ is uniquely (respectively, multi-, un-) decomposable.		
	\end{proposition}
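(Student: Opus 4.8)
The plan is to push everything through internally passive sets and the join formula, leaving a single matroid-theoretic step as the crux. Write $\IP(B)=S\cup T$ and $\widetilde{T}:=\bigcup_{f\in S}T(B;f)$. The engine is the following: if $B_{1},\dots,B_{k}\intOrder B$, then $\IP\bigl(\bigvee_{i}B_{i}\bigr)=\bigcup_{i}\IP(B_{i})$. Indeed, iterating the join formula of Theorem~\ref{theorem:internalOrderFacts} (no join equals $\widehat{1}$, since every $B_{i}$ lies below $B$) gives $\bigvee_{i}B_{i}=\minBasis\bigl(\bigcup_{i}\IP(B_{i})\bigr)$; each $B_{i}\intOrder\bigvee_{i}B_{i}$ yields $\bigcup_{i}\IP(B_{i})\subseteq\IP\bigl(\bigvee_{i}B_{i}\bigr)$ by Theorem~\ref{theorem:internalOrderIsAPoset}, while $\minBasis(I)=I\cup J$ with $J$ internally active forces $\IP(\minBasis(I))\subseteq I$, and the two inclusions combine. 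Applying this to $B':=\bigvee_{f\in S}\minBasis(f\cup T)$, and using that $\IP(\minBasis(f\cup T))=\{f\}\cup T(B;f)$ (the element $f\notin B_{0}$ is internally passive by Corollary~\ref{corollary:activeInB0}, and $T(B;f)$ is its $T$-part by definition), gives $\IP(B')=S\cup\widetilde{T}$. Since $\widetilde{T}\subseteq T$ always, $B=B'$ if and only if $\widetilde{T}=T$; hence $B$ is undecomposable precisely when $\widetilde{T}\subsetneq T$, i.e.\ precisely when $B$ is deficient.

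Second, I would parametrise the decompositions of a decomposable $B$. If each $B_{f}$ ($f\in S$) is $f$-principal and $\bigvee_{f}B_{f}=B$, then $B_{f}\intOrder B$, so $B_{f}\intOrder\minBasis(f\cup T)$ by Proposition~\ref{proposition:fPartIsMax}, whence $T(B_{f})\subseteq T(B;f)$; conversely, by Proposition~\ref{proposition:principalBasesGiveChains} the $f$-principal bases below $B$ form a saturated chain, and following $\IP$ along it shows that their $T$-parts are exactly the final segments (with respect to the order on $E$) of $T(B;f)$. Combined with the engine, $\bigvee_{f}B_{f}=B$ if and only if $\bigcup_{f}T(B_{f})=T$. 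So the decompositions of $B$ are exactly the tuples $(U_{f})_{f\in S}$ with $U_{f}$ a final segment of $T(B;f)$ and $\bigcup_{f}U_{f}=T$, the maximal one being~\eqref{equation:decomposeIntoPrincipal}. The implication ``perfect $\Rightarrow$ uniquely decomposable'' is then immediate: if the $T(B;f)$ are pairwise disjoint and cover $T$, then $\bigcup_{f}U_{f}=T$ with $U_{f}\subseteq T(B;f)$ forces $U_{f}=T(B;f)$ for every $f$.

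The crux is the converse, ``uniquely decomposable $\Rightarrow$ perfect''. Since shrinking final segments is monotone, a second decomposition exists if and only if one can shrink a \emph{single} $T(B;f)$ and still cover $T$, which (unravelling) happens exactly when $\min T(B;f)\in T(B;g)$ for some $f\neq g$. So it remains to show: if $\widetilde{T}=T$ but the sets $T(B;f)$ are not pairwise disjoint, then some $\min T(B;f)$ is doubly covered. This is where the poset formalism runs out and the matroid axioms must be used. I would argue by contradiction: pick a doubly covered $t\in T(B;f)\cap T(B;g)$ that is minimal among all doubly covered elements; unique decomposability then forces $\min T(B;f)<t$ and $\min T(B;g)<t$, each singly covered. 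Strong circuit elimination applied to the fundamental circuits $C(B_{0};f)$ and $C(B_{0};g)$ through their common element $t$ yields a circuit inside $B_{0}\cup\{f,g\}$ containing $f$ and $g$ but not $t$; since $\{f,g\}\cup T\subseteq B$ is independent, this circuit must meet $B_{0}\setminus B$, and playing that element off against the final-segment descriptions of $T(B;f)$ and $T(B;g)$ should produce the contradiction. I expect this circuit-exchange argument to be the main obstacle; the rest is routine bookkeeping with $\IP$-sets.

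Finally, the remaining equivalences follow formally. We will have shown deficient $\Leftrightarrow$ undecomposable and (both directions) perfect $\Leftrightarrow$ uniquely decomposable; since $\bases(\matroid)=\cD\sqcup\cA\sqcup\cP$ and the three decomposability types likewise partition $\bases(\matroid)$, the remaining pair abundant $\Leftrightarrow$ multi-decomposable is forced.
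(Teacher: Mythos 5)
Your first two parts are sound and essentially reproduce the paper's bookkeeping: the ``engine'' $\IP\bigl(\bigvee_i B_i\bigr)=\bigcup_i\IP(B_i)$ for bases below $B$, the identity $\IP(B')=S\cup\widetilde{T}$, and hence deficient $\Leftrightarrow$ undecomposable, together with perfect $\Rightarrow$ uniquely decomposable via your (correct and useful) description of the $T$-parts of $f$-principal bases as the final segments of $C(B_0;f)-f$. The genuine gap is the step you yourself call the crux: ``uniquely decomposable $\Rightarrow$ perfect,'' which you correctly reduce to the claim that if $\widetilde{T}=T$ but the sets $T(B;f)$ are not pairwise disjoint, then $\min T(B;f)\in T(B;g)$ for some $f\neq g$. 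You do not prove this claim; you sketch a strong-circuit-elimination strategy and write that it ``should produce the contradiction'' and is ``the main obstacle.'' This is exactly where the matroid axioms, rather than poset formalism, must do the work, and the sketch does not close: after eliminating the common element $t$ from $C(B_0;f)$ and $C(B_0;g)$ you get a circuit through $f$ and $g$ meeting $B_0\setminus B$, but nothing in the outline explains how that element contradicts the assumed single coverage of the two minima. So, as written, the proposal does not establish the proposition; its central implication is left as a plan.

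It is also worth comparing routes. The paper never proves the hard converse at all: since $\cD\sqcup\cA\sqcup\cP$ and the three decomposability types are both partitions of $\bases(\matroid)$, it proves only the three forward implications, handling ``abundant $\Rightarrow$ multi-decomposable'' by exhibiting a second family of principal bases $P_f$ with $T(P_f)=\{t\in T(B;f)\mid f=\min\{g\in S\mid t\in T(B;g)\}\}$; the converses then come for free. If you restructure your argument this way you can delete the hard direction entirely. Be aware, though, that your own final-segment analysis shows this does not make the difficulty vanish: a family of $f$-principal bases with prescribed $T$-parts exists only if those parts are final segments of the $C(B_0;f)-f$, which (as you observed) is equivalent to some $\min T(B;f)$ being doubly covered; the paper's construction takes this realizability for granted. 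So the matroid-theoretic point you isolated is the real content here, and it must actually be proved, not sketched, before either version of the argument is complete.
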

	\begin{proof}
		Note that we have two partitions of the bases of $\matroid$.
		On the one hand we have $\bases = \cD \smallDisjointUnion \cA \smallDisjointUnion \cP$, and on the other we have the decomposability partition.
		Therefore it is sufficient to prove the ``only if'' direction of the proposition.
			
		First we deal with the trivial case $B=B_{0}(\matroid)$.
		In this case $B$ is perfect by Proposition \ref{proposition:perfectBasesExist}.
		Since $B$ is the least element of the lattice $\poset_{\Int}(\matroid)$, the typical convention for empty joins in lattices yields $B = \bigvee \emptyset = B'$. 
		As this expression is clearly unique, $B_{0}$ is uniquely decomposable.

		Now we turn to the general case.
		Let $B \neq B_{0}$ be a basis of $\matroid$. 
		By repeated application of the expression for the join of two bases in Theorem \ref{theorem:internalOrderFacts} we may write~$B'$ as 
			\[
				B' = \minBasis\left(\bigcup_{f \in S}\intPassive \big(\minBasis(f \cup T\big)\right).
			\]
		For every $f \in S$ the internally passive elements of $\minBasis(f \cup T) $ are internally passive in $B$ by Proposition \ref{proposition:minBases}.
		It follows that $\bigcup_{S}\intPassive (\minBasis(f \cup T)) \subseteq \intPassive(B)$ and, in particular, that the union is an independent set in $\matroid$.
		Moreover, Proposition~\ref{proposition:minBases} implies that the internally passive elements of $\minBasis(f \cup T)$ are the elements of~$f \cup T(B;f)$ and so we have
			\[B' = \minBasis\left(S \cup \bigcup_{f \in S} T(B;f)\right).\]

		If the basis $B$ is deficient, then $\intPassive(B') = S \cup \widetilde{T} \subsetneq \intPassive(B)$ which proves that~$B'$ is strictly smaller than $B$ in the internal order.
		Now suppose there is a collection of principal bases whose join is $B$.
		Then this collection must be of the form 
			\[\{B_ f \suchthat f \in S \text{ and } B_ f \text{ is $f$-principal}\}\]
		and there must be at least one $f \in S$ for which $\minBasis(f \cup T) \prec_{\Int} B_ f$.
		But then~$\minBasis(f \cup T) \prec_{\Int} B_ f \intOrder B$ which contradicts the fact that~$\minBasis(f \cup T)$ is the maximal $f$-principal smaller than $B$ in the internal order.
		Hence $B$ cannot be written as the join of principal bases.

		Now fix a basis $B$ that is not deficient.
		Then $\intPassive(B') = \intPassive(B)$ and so the bases~$B$ and $B'$ coincide.
		Note that in this case we can write $B = \bigvee_{f \in S} P_ f$ for any collection of principal bases $\{P_ f \suchthat S(P_ f) = f\}$ such that $\bigcup T(P_ f) = T$.
		If $B$ is a perfect basis, we must have $P_ f = \minBasis(f \cup T)$ and so \eqref{equation:decomposeIntoPrincipal} is a unique expression for~$B$ as the join of principal bases.
		On the other hand, if $B$ is abundant then we can write~$B = \bigvee P_ f$ where $S(P_ f)=f$ and 
		\[T(P_ f) = \{t \in T(B;f) \suchthat f = \min\{g \in S \suchthat t \in T(B;g)\}\}.\] 
		As the bases $P_ f$ are all principal and the sets $T(P_ f)$ partition $T$ it follows that~$\bigvee P_ f$ is an expression of $B$ as the join of principal bases and that this expression is different from \eqref{equation:decomposeIntoPrincipal}, completing the proof.
	\end{proof}

	Proposition \ref{proposition:joinsOfProjections} gives us one way to use the internal order of $\matroid$ to determine if a basis $B$ is perfect.
	The next proposition gives us another.

	\begin{proposition}
		\label{proposition:perfectionFiltersDown}
		Every basis in the downset of an internally perfect basis in~$\poset_{\Int}(\matroid)$ is internally perfect.
	\end{proposition}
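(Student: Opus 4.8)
By Proposition \ref{proposition:joinsOfProjections} it suffices to show that if $B$ is uniquely decomposable and $B' \intOrder B$ then $B'$ is uniquely decomposable; I will instead argue straight from the definitions that $B'$ is neither internally abundant nor internally deficient, which by the trichotomy $\bases = \cD \smallDisjointUnion \cA \smallDisjointUnion \cP$ yields the claim. Write $B = S^{T}_{A}$ and $B' = {S'}^{T'}_{A'}$. Since $B' \intOrder B$, Theorem \ref{theorem:internalOrderIsAPoset} gives $\intPassive(B') \subseteq \intPassive(B)$, and intersecting (respectively subtracting) $B_0$ yields $S' \subseteq S$ and $T' \subseteq T$.

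The first step is to rule out abundance. Fix $f \in S'$. By Proposition \ref{proposition:fPartIsMax} the basis $\minBasis(f \cup T')$ is the maximal $f$-principal basis at or below $B'$ in $\poset_{\Int}$; as $B' \intOrder B$ it is in particular an $f$-principal basis at or below $B$, hence it lies at or below the maximal such basis, namely $\minBasis(f \cup T)$. Thus $\minBasis(f \cup T') \intOrder \minBasis(f \cup T)$, and comparing internally passive sets (Theorem \ref{theorem:internalOrderIsAPoset} again, then restricting to $B_0$) shows $T(B';f) \subseteq T(B;f)$ for every $f \in S'$. Because $B$ is perfect the sets $\{T(B;f)\}_{f \in S}$ are pairwise disjoint, so their subsets $\{T(B';f)\}_{f \in S'}$ (indexed over $S' \subseteq S$) are also pairwise disjoint; hence $B'$ is not abundant.

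It remains to show $B'$ is not deficient, i.e., that $T' \subseteq \bigcup_{f \in S'} T(B';f)$, the reverse inclusion being automatic. Fix $e \in T'$; then $e \in T$, and by perfection of $B$ there is a unique $f_0 \in S$ with $e \in T(B;f_0)$. Writing $C(B_0;f_0) - f_0 = (e_1 < \dots < e_k)$, Proposition \ref{proposition:principalBasesGiveChains} presents the $f_0$-principal bases as the members $B_i = B_0 - e_i \cup f_0$ of a saturated chain with $\intPassive(B_i) = \{f_0\} \cup \{e_{i+1}, \dots, e_k\}$; in this language $\minBasis(f_0 \cup T) = B_p$ and $\minBasis(f_0 \cup T') = B_{p'}$, where $p = \max\{i : e_i \notin T\}$ and $p' = \max\{i : e_i \notin T'\}$ satisfy $p \le p'$ (since $T' \subseteq T$), and $e = e_m$ with $m > p$. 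The goal now is to produce an owner for $e$ below $B'$: either (i) $f_0 \in B'$ (so $f_0 \in S'$) and $e \in T(B';f_0)$, or (ii) $e \in T(B';g)$ for some other $g \in S'$.

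I expect step (i)--(ii) to be the main obstacle. The leverage available is that $e \in T'$ means $e$ is internally passive in $B'$, so there is $h < e$ with $h \notin B'$ and $B' - e \cup h \in \bases$; the plan is to trace this pivot through the fundamental circuit $C(B_0;f_0)$ and, using that $B$ is perfect — so that no two distinct principal bases at or below $B$ claim overlapping pieces of $T$, which constrains how $B'$ meets the $f_0$-principal chain — to conclude that $f_0 \in B'$ and that $e$, together with the circuit elements above it, stays internally passive in $B'$, placing $e$ in $T(B';f_0)$; in the remaining case, where a gap in $T'$ above $e$ along the chain forces $e$ out of $T(B';f_0)$, one argues that the gap element $e_{p'}$ carries $e$ into the top segment of a different principal basis at or below $B'$. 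Granting this for every $e \in T'$ gives $T' \subseteq \bigcup_{f \in S'} T(B';f)$, so $B'$ is neither deficient nor abundant, hence internally perfect.
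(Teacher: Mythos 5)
Your proposal is incomplete in exactly the place where the real content of the proposition lies. The first half is fine: from $B' \intOrder B$ you correctly get $S' \subseteq S$, $T' \subseteq T$, and (via Proposition \ref{proposition:fPartIsMax}, since any $f$-principal basis below $B'$ is also below $B$) the inclusion $T(B';f) \subseteq T(B;f)$ for each $f \in S'$, so pairwise disjointness of the $f$-parts is inherited from the perfection of $B$ and $B'$ cannot be abundant. But the second half — that $B'$ is not deficient, i.e.\ that every $e \in T'$ lies in $T(B';f)$ for some $f \in S'$ — is never proved: you state a plan (``trace the pivot through $C(B_0;f_0)$'', ``granting this for every $e \in T'$'') and explicitly flag it as the main obstacle. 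Identifying, for an arbitrary $B'$ below $B$, which principal basis ``owns'' each $e \in T'$ (including the case $f_0 \notin B'$, where ownership must migrate to some other $g \in S'$) is precisely the nontrivial claim, and as written nothing forces the dichotomy (i)/(ii) you assert. So there is a genuine gap.

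The paper avoids this difficulty by not attacking an arbitrary element of the downset at once: it suffices to show perfection passes down one covering step, since any $B'' \intOrder B$ is reached along a saturated chain. If $B' \isCovered B$ with $B = S^{T}_{A}$, then $|\IP(B')| = |\IP(B)| - 1$, so either an element $e \in S$ with $T(B;e) = \emptyset$ is dropped (and the remaining $f$-parts are untouched, so $T(B') = T(B) = \disjointUnion_{f \in S(B')} T(B';f)$), or a single $e \in T$ is dropped, and by perfection of $B$ it lies in a unique $T(B;f)$, so $T(B') = (T(B;f) - e) \smallDisjointUnion \disjointUnion_{g \in S - f} T(B;g)$ is again a disjoint cover. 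In a single covering step the bookkeeping you were trying to do globally becomes trivial, and no case analysis about which principal basis inherits $e$ is needed. I recommend restructuring your argument around this reduction (or else supplying a complete proof of your step (i)--(ii), which is essentially equivalent in difficulty to the proposition itself).
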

	\begin{proof}
		It is sufficient to prove that every basis covered by an internally perfect basis is internally perfect.
		For this let $B = S^{T}_{A}$ be a perfect basis of an ordered matroid~$\matroid$ and let $B' = B - e \cup a$ be covered by $B$.
		As $\poset_{\Int}$ is a graded poset we must have $|\intPassive(B')| = |\intPassive(B)|-1$ and so either 
			\begin{inparaenum}
				\item $e \in S$ and $T(B')=T(B)$, 
				or~\item $e \in T$ and $S(B')=S(B)$.
			\end{inparaenum}
		In the first case we have $T(B;e) = \emptyset$ since $B' \isCovered B$.
		It then follows that
			\[T(B') = T(B) = \disjointUnion_{f \in S(B)}T(B;f) = \disjointUnion_{f \in S(B')}T(B;f),\]
		as desired.
		In the second case, since~$B$ is perfect and $e \in T$ we have a unique $f \in S$ such that $e \in T(B;f)$.
		But then 
		\begin{align*}
			T(B') &= T(B) -e 
			\\ &= (T(B;f) - e) \cup \disjointUnion_{g \in S-\{f\}}T(B;g) .
		\end{align*}
		This union is disjoint, so it follows that $B'$ is perfect in this case.
	\end{proof}

	In the subsequent sections we will be interested in perfect ordered matroids, that is, ordered matroids whose bases are all perfect.
	Note that Proposition \ref{proposition:perfectionFiltersDown} supplies us with a useful computational tool to verify that a given ordered matroid is perfect insofar as it implies that to check that every basis of an ordered matroid is perfect, it is enough to check that all coatoms of the internal order (i.e., all bases covered by the artificial top element $\hat{1}$) are perfect.
	Also note that implicit in the proof of Proposition \ref{proposition:perfectionFiltersDown} is the fact that a perfect basis $B=S^{T}_{A}$ covers at most $|S|$ many bases in the internal order.
	In \cite{lasVergnas2001active}, Las Vergnas shows that the number of bases covered by an arbitrary basis $B$ in the internal order is no greater than the number of internally passive elements of $B$.
	In the next proposition we compute the exact number of bases covered by a perfect basis $B$ in the internal order.

	\begin{proposition}
		\label{proposition:characterizationsByCovers}
		If $B = S^{T}_{A}$ is a perfect basis of an ordered matroid $\matroid$, then it covers exactly $|S|$ bases in $\poset_{\Int}(\matroid)$.
		
	\end{proposition}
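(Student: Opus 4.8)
The plan is to prove the matching lower bound: since it was already noted just after Proposition~\ref{proposition:perfectionFiltersDown} that a perfect basis covers at most $|S|$ bases, it suffices to exhibit $|S|$ distinct bases covered by $B$. Two preliminary remarks drive the construction. First, because $\poset_{\Int}(\matroid)$ is a graded lattice with height function $|\intPassive(\cdot)|$ (Theorem~\ref{theorem:internalOrderFacts}), Theorem~\ref{theorem:internalOrderIsAPoset} shows that a basis $B'$ is covered by $B$ \emph{if and only if} $\intPassive(B') = \intPassive(B)\setminus\{x\}$ for some $x\in\intPassive(B)$. Second, whenever bases $B_{1},\dots,B_{m}$ satisfy that $\bigcup_{i}\intPassive(B_{i})$ is independent, repeated application of the join formula in Theorem~\ref{theorem:internalOrderFacts} gives $\bigvee_{i}B_{i} = \minBasis(\bigcup_{i}\intPassive(B_{i}))$ (no partial join collapses to $\hat 1$), whence $\intPassive(\bigvee_{i}B_{i}) = \bigcup_{i}\intPassive(B_{i})$: the inclusion $\subseteq$ holds since $\intPassive(\minBasis(I))\subseteq I$, and $\supseteq$ holds since each $B_{i}\intOrder\bigvee_{j}B_{j}$.

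Write $B_{f} := \minBasis(f\cup T)$ for $f\in S$; by Proposition~\ref{proposition:minBases} this is $f$-principal with $\intPassive(B_{f}) = \{f\}\cup T(B;f)$ and $T(B;f)\subseteq T$. Since $B$ is perfect, $T = \disjointUnion_{f\in S}T(B;f)$, so every union of the sets $\intPassive(B_{f})$ is contained in $\intPassive(B)$ and hence independent. Partition $S$ into $S_{0} = \{f\in S : T(B;f) = \emptyset\}$ and $S_{1} = \{f\in S : T(B;f)\neq\emptyset\}$ and, to each $f\in S$, I would associate a cover $C_{f}$ of $B$ as follows. If $f\in S_{0}$, set $C_{f} := \bigvee_{g\in S\setminus\{f\}}B_{g}$; the second remark gives $\intPassive(C_{f}) = \bigcup_{g\neq f}(\{g\}\cup T(B;g)) = (S\setminus\{f\})\cup T = \intPassive(B)\setminus\{f\}$. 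If $f\in S_{1}$, then $\intPassive(B_{f})$ has at least two elements, so $B_{f}$ is not the least element $\minBasis(f)$ of the $f$-principal chain; since that chain is saturated (Proposition~\ref{proposition:principalBasesGiveChains}), $B_{f}$ covers in $\poset_{\Int}(\matroid)$ an $f$-principal basis $B_{f}^{-}$, which, being $f$-principal and covered by $B_{f}$, satisfies $\intPassive(B_{f}^{-}) = \{f\}\cup(T(B;f)\setminus\{m_{f}\})$ for some $m_{f}\in T(B;f)$. Then set $C_{f} := B_{f}^{-}\vee\bigvee_{g\in S\setminus\{f\}}B_{g}$; the second remark gives $\intPassive(C_{f}) = \intPassive(B_{f}^{-})\cup\bigcup_{g\neq f}(\{g\}\cup T(B;g)) = S\cup(T\setminus\{m_{f}\}) = \intPassive(B)\setminus\{m_{f}\}$. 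In either case the first remark shows $C_{f}$ is covered by $B$.

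Finally, the bases $C_{f}$ ($f\in S$) are pairwise distinct, since the element of $\intPassive(B)$ that $\intPassive(C_{f})$ omits is $f$ when $f\in S_{0}$ and $m_{f}$ when $f\in S_{1}$: distinct $f\in S_{0}$ give distinct omitted elements of $S$; the $m_{f}$ for distinct $f\in S_{1}$ lie in the pairwise disjoint sets $T(B;f)$ and so are distinct elements of $T$; and an element of $S$ is never equal to one of $T$. Thus $B$ covers at least $|S_{0}|+|S_{1}| = |S|$ bases, which together with the upper bound yields exactly $|S|$.

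The part I expect to require the most care is the claim that each $C_{f}$ has precisely the advertised set of internally passive elements --- equivalently, that nothing becomes internally active on passing from $B$ to $C_{f}$. This is exactly where perfectness enters: it is the disjointness of the $f$-parts of $T$ that keeps the unions of internally passive sets appearing above inside $\intPassive(B)$, so that the joins in question do not collapse to $\hat 1$ and no spurious activity is created. (The degenerate cases $S = \{f\}$, where the empty join is $B_{0}$ by the usual convention, need only a cursory check but cause no difficulty.)
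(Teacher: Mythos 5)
Your proof is correct, and it shares the paper's overall strategy: the upper bound is quoted from the remark following Proposition~\ref{proposition:perfectionFiltersDown} (exactly as the paper does), and the lower bound is obtained by manufacturing $|S|$ covers of $B$ out of joins of the principal bases $\minBasis(f\cup T)$. Where you diverge is in how the covers are produced and told apart. The paper sets $A_g=\bigvee_{f\in S-\{g\}}\minBasis(f\cup T)$, proves the interval $[A_g,B]$ is a chain isomorphic to $[B_{0},\minBasis(g\cup T(B;g))]$, takes the unique element of that interval covered by $B$, and gets distinctness from the incomparability of the $A_g$'s (whose join is $B$). You instead compute internally passive sets exactly, via the identity $\intPassive(\bigvee_i B_i)=\bigcup_i\intPassive(B_i)$ for joins with independent union (which follows from $\intPassive(\minBasis(I))\subseteq I$ and monotonicity of $\intPassive$ under $\intOrder$) together with the cover criterion in the graded lattice, and for $f$ with $T(B;f)\neq\emptyset$ you splice in the predecessor of $\minBasis(f\cup T)$ on the saturated $f$-principal chain of Proposition~\ref{proposition:principalBasesGiveChains}. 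This makes explicit something the paper's interval argument leaves implicit, namely which element of $\intPassive(B)$ each cover omits ($f$ itself when $T(B;f)=\emptyset$, some $m_f\in T(B;f)$ otherwise), so distinctness reduces directly to the disjointness of the $f$-parts of $T$ --- the same place perfection enters the paper's proof via uniqueness of the decomposition. The only points needing the ``cursory check'' you promise are genuinely easy: the bottom of the $f$-principal chain is $\minBasis(f)$ with $\intPassive=\{f\}$ (every element of $\minBasis(I)$ outside $I$ is active, and $f\notin B_{0}$ is passive), so $\minBasis(f\cup T)$ really does have a chain predecessor when $T(B;f)\neq\emptyset$, and the empty join convention handles $|S|=1$. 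No gap.
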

	\begin{proof}
		Let $B = S^{T}_{A}$ be a perfect basis of $\matroid$.
		Then Proposition \ref{proposition:joinsOfProjections} yields the following unique expression of $B$ as the join of principal bases:
		\begin{equation}
		\label{equation:uniqueDecomp}	
		B = \bigvee_{f \in S} \minBasis(\{f\} \cup T).
		\end{equation}		
		For $g\in S$ write $A_{g} := \bigvee_{f \in S - \{g\}} \minBasis(\{f\} \cup T)$ and consider the interval~$[A_{g},B]$ in $\poset_{\Int}$.
		We claim that the interval $[A_{g},B]$ is isomorphic to the chain~$[B_{0}, B']$ where~$B' = \minBasis\big(\{g\} \cup T(B;g)\big)$.
		To see this note that a basis $B''$ in the half-open interval $(A_{g},B]$ satisifies 
			\begin{align*}
				S(A_{g}) &\subsetneq S(B'') \subseteq S(B)
				\\T(A_{g}) &\subseteq T(B'') \subseteq T(B).
			\end{align*}
		As Equation \eqref{equation:uniqueDecomp} is the unique expression of $B$ as the join of principal bases, it follows that $S(B'')- S(A_{g}) = \{g\}$ and $T(B'')-T(A_{g}) \subseteq T(B;g)$.
		It follows that~$B''$ is the join of $A_{g}$ and some $g$-principal basis in $[B_{0},B']$, proving the claim.
		
		Given distinct $f,g \in S$, the bases $A_ f$ and $A_{g}$ are incomparable in the internal order of $\matroid$ and their join is $B$. 
		If we write $B_{g}$ for the basis in $[A_{g},B]$ that is covered by $B$, then
		it follows that $B_ f \neq B_{g}$ for distinct $f,g \in S$.
		This implies that $B$ covers at least $|S|$ bases in $\poset_{\Int}$.
		Since the proof of the previous proposition implies that $B$ covers at most $|S|$ bases, the result follows.
	\end{proof}

\section{Minors of Perfect Matroids}
\label{section:minors}

In the previous section we studied local properties of internally perfect bases of an ordered matroid.
In this and subsequent sections, we take a global perspective and investigate \defn{internally perfect ordered matroids}, that is, ordered matroids whose bases are all internally perfect.
For brevity's sake we typically call such an ordered matroid a perfect matroid.
Moreover, we call an unordered matroid (\defn{internally}) \defn{perfect} if there is some linear ordering of its ground set so that the resulting ordered matroid is internally perfect.
Our goal in this section is to study the minors of perfect matroids in order to prove the following theorem.
\begin{theorem}
\label{theorem:minorClosedIfDelOffB0}
	Let $\matroid = (E, \bases, \phi)$ be an internally perfect ordered matroid with initial basis $B_{0}$ and let $F_{1}$ and $F_{2}$ be disjoint subsets of $E$ such that any element of~$F_{2} \cap B_{0}$ is a coloop.
	Then the minor $\matroid /F_{1}\setminus F_{2}$ is internally perfect with respect to the ordering of its ground set induced by the order of $\matroid$.
\end{theorem}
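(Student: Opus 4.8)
The plan is to reduce to single-element minors, handling deletions and contractions separately by exploiting that deletion and contraction of disjoint sets commute, so that $\matroid/F_1\setminus F_2=(\matroid\setminus F_2)/F_1$. First I would prove that $\matroid\setminus F_2$ is internally perfect with initial basis $B_0\setminus F_2$, and then that internal perfectness is closed under contracting an arbitrary subset; the latter needs no side hypothesis, since the restriction on $F_2\cap B_0$ is dictated precisely by the deletion step. Each of these follows by induction from the one-element cases: (i) delete a single $f\notin B_0$; (ii) delete a single coloop; (iii) contract a single element. Throughout one may assume looplessness, discarding any loops created by a contraction — a loop lies in no basis and no cocircuit, so it affects neither the internal order nor internal perfectness. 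For the deletion induction I would also check that the hypothesis ``$F_2\cap B_0$ consists of coloops'' passes to each intermediate matroid: deleting an $f\notin B_0$ leaves $B_0$ and all coloop relations unchanged, while deleting a coloop removes it from $B_0$ and from the coloop set and leaves the status of the remaining elements intact.

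For case (i) the crucial observation is that when $f\notin B_0$, the element $f$ is never the minimum of a fundamental cocircuit $C^*_\matroid(B;e)$ for a basis $B\ni e$ with $f\notin B$: if it were, then $B-e\cup f$ would be a basis with $f$ internally active, forcing $f\in B_0$ by Corollary~\ref{corollary:activeInB0}, a contradiction. Since deleting $f$ only removes $f$ from fundamental cocircuits, this shows that for every basis $B$ of $\matroid\setminus f$ the sets $\IP(B)$ and $\IA(B)$, the decomposition $B=S^T_A$, the operator $\minBasis$, and the initial basis are all unchanged; hence a basis of $\matroid\setminus f$ is internally perfect there exactly when it is in $\matroid$, and $\matroid\setminus f$ is internally perfect. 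Case (ii) is analogous: a coloop $f$ lies in no fundamental cocircuit of any basis, so $\IP$ and $\minBasis$ transfer verbatim, while $\IA(B-f)=\IA(B)-f$ and the initial basis becomes $B_0-f$; the decomposition of $B-f$ matches that of $B$, so internal perfectness is preserved.

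For case (iii), coloop contraction coincides with coloop deletion, so I would assume $f$ is neither a loop nor a coloop. The cocircuits of $\matroid/f$ are exactly the cocircuits of $\matroid$ avoiding $f$, and for $e\in B-f$ the fundamental cocircuit $C^*_\matroid(B;e)$ already avoids $f$ and equals $C^*_{\matroid/f}(B-f;e)$; hence $\IA_{\matroid/f}(B-f)=\IA_\matroid(B)\setminus f$, $\IP_{\matroid/f}(B-f)=\IP_\matroid(B)\setminus f$, and $\minBasis_{\matroid/f}(I)=\minBasis_\matroid(I\cup f)\setminus f$. The real work is the new initial basis. If $f\in B_0$ then $B_0(\matroid/f)=B_0-f$, and I would split on whether $f$ is internally active or passive in $B$ — in the passive case using that $B$ is perfect to single out the unique $g\in S$ with $f\in T(B;g)$ — to conclude that $S$ is unchanged, $T$ either is unchanged or loses $f$, each $f$-part $T(B;g)$ transfers to $\matroid/f$ with at most one of them losing $f$, and the disjoint decomposition of $T$ survives. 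If $f\notin B_0$ then $B_0(\matroid/f)=\minBasis_\matroid(f)\setminus f=:J_1$, which is contained in $B_0$; writing $B=S^T_A$ (so $f\in S$) one finds $S_{\matroid/f}(B-f)=(S\setminus f)\sqcup(T\setminus J_1)$ and $T_{\matroid/f}(B-f)=T\cap J_1$ — the provisionally passive elements outside $J_1$ are promoted to perpetually passive. For each $h\in S_{\matroid/f}(B-f)$ the basis $D_h:=\minBasis_\matroid(\{h,f\}\cup(T\cap J_1))$ satisfies $D_h\intOrder B$, hence is internally perfect in $\matroid$ by Proposition~\ref{proposition:perfectionFiltersDown}; computing its activities shows $S_\matroid(D_h)$ equals $\{f\}$ or $\{h,f\}$ according to the type of $h$, whence $T_{\matroid/f}(B-f;h)=\IP_\matroid(D_h)\cap J_1$ can be read off from the perfect decomposition of $D_h$, and one checks that these sets partition $T\cap J_1$, so $B-f$ is internally perfect.

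Putting the pieces together, $\matroid\setminus F_2$ is internally perfect with initial basis $B_0\setminus F_2$ by iterating (i) and (ii), and then $(\matroid\setminus F_2)/F_1$ — which is $\matroid/F_1\setminus F_2$ with the ordering of its ground set induced from $\matroid$ — is internally perfect by iterating (iii) (discarding loops as they appear). I expect the contraction of an $f\notin B_0$ to be the main obstacle: there the initial basis changes to $\minBasis_\matroid(f)\setminus f$ rather than merely keeping or dropping $f$, so the decompositions $B=S^T_A$ are genuinely reorganized, with some provisionally passive elements becoming perpetually passive, and re-establishing the disjoint-union condition requires carefully pushing the $f$-parts through $\minBasis_{\matroid/f}(I)=\minBasis_\matroid(I\cup f)\setminus f$ and leaning on the hereditary perfectness supplied by Proposition~\ref{proposition:perfectionFiltersDown}.
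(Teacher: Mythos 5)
Your proposal takes essentially the same route as the paper's proof: reduce to single-element minors, dispose of deletions (coloops and elements outside $B_{0}$) by noting the $\STA$-data are unchanged, and show that contracting any single element preserves perfection by a case analysis on where the contracted element sits relative to $B_{0}$ and the decomposition $S^{T}_{A}$ --- your identities $\IP_{\matroid/f}(B-f)=\IP_{\matroid}(B)\setminus f$ and $\minBasis_{\matroid/f}(I)=\minBasis_{\matroid}(I\cup f)\setminus f$, together with the resulting formulas for the new $S$, $T$, and initial basis, are exactly the content of the paper's Propositions \ref{proposition:IAandIPofContractions}--\ref{proposition:relateMinBas} and Lemmas \ref{proposition:relateSTAwe}--\ref{proposition:relateSTAnoe}. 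The one compressed step, verifying that the sets $\IP_{\matroid}(D_h)\cap J_1$ partition $T\cap J_1$ when $f\notin B_{0}$, is precisely the concluding computation of Theorem \ref{theorem:perfectClosedUnderContractions} and does go through with the tools you invoke (the principal chains of Propositions \ref{proposition:principalBasesGiveChains}--\ref{proposition:fPartIsMax} and Proposition \ref{proposition:perfectionFiltersDown}), so the plan is sound.
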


For the remainder of this section let us fix an ordered internally perfect matroid~$\matroid = (E, \bases, \phi)$ and an element $e \in E$.
In the trivial case where $\matroid$ has exactly one basis, Proposition \ref{proposition:perfectBasesExist} implies that $\matroid$ is internally perfect.
Moreover, the same proposition implies that every minor of $\matroid$ is internally perfect.
So without loss of generality we assume hereafter that $|\bases(\matroid)|>1$.

We prove Theorem \ref{theorem:minorClosedIfDelOffB0} in three main steps in order of increasing complexity.
First we show that deleting or contracting a loop or coloop of $\matroid$ preserves perfection.
Then we prove that $\matroid-e$ is perfect whenever $e \notin B_{0}(\matroid)$; see Section \ref{PMdeletions}.
Finally, in Section \ref{PMcontractions} we demonstrate that~$\matroid/e$ is  perfect whenever~$\matroid$ is.

We begin with the case when $e \in E$ is either a loop or a coloop.
It is a straightforward exercise to prove that in this case the posets $\poset_{\Int}(\matroid),\poset_{\Int}(\matroid/e),$ and $\poset_{\Int}(\matroid-e)$ are all isomorphic.
This fact and some simple comparisons of~$\STA$-decompositions of bases in the various matroids are enough to show that if $\matroid$ is internally perfect, then so are $\matroid-e$ and $\matroid/e$.

\subsection{Perfect Matroids and Deletion}
\label{PMdeletions}

Having fixed an internally perfect ordered matroid $\matroid$ and an element $e \in E(\matroid)$ as in the introduction to this section, let $\cN := \matroid - e$.
Our goal is to prove that $\cN$ is perfect whenever $e$ is not in the initial basis of $\matroid$.
We will need the following well-known (and easy to prove) facts characterizing the bases, circuits and cocircuits of $\cN$ in terms of those of $\matroid$.

	\begin{proposition}
	\label{proposition:delBasicFacts}
		Let $\matroid = (E, \bases)$ be a matroid, $e \in E$, and $\cN = \matroid - \{e\}$.
		Then 
			\begin{enumerate}[label=\upshape(\roman*)]
			\item \label{eq:delBases} $\bases(\matroid - e) = \{B \in \bases(\matroid) \suchthat e \notin B\}$,
			\item \label{eq:delCircuits} $\circuits(\cN) = \{C \subseteq E-e \suchthat C \in \circuits(\matroid)\}$, and
			\item \label{eq:delCocircuits} $\cocircuits(\cN)$ is the set of minimal nonempty members of the set 
				\[\{C^{*}-\{e\} \suchthat C^{*} \in \cocircuits(\matroid)\}.\]
			\end{enumerate}
	\end{proposition}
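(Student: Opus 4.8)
The plan is to verify the three characterizations directly from the definitions of deletion, independent sets, circuits, and cocircuits, using the fact that $\matroid / e$ and $\matroid \setminus e$ are defined via the duality $\matroid / T := (\matroidDual \setminus T)^{*}$.

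First, for part \ref{eq:delBases}, recall that $\matroid \setminus e$ has independent sets $\{I \setminus e : I \in \independents(\matroid)\}$; since $e \notin E(\matroid \setminus e)$, these are exactly the independent sets of $\matroid$ that avoid $e$. The bases of $\matroid \setminus e$ are the maximal such sets. I would argue that a basis $B$ of $\matroid$ with $e \notin B$ remains maximal independent in $\matroid \setminus e$ (nothing can be added), and conversely that any basis $B'$ of $\matroid \setminus e$ is independent in $\matroid$ and must have size $r(\matroid)$: if not, one could augment $B'$ within $\matroid$ by some element, and that element cannot be $e$ alone forced — here one uses that $\rank(\matroid \setminus e) = \rank(\matroid)$ unless $e$ is a coloop, and in the coloop case the statement still holds since then there are no bases avoiding $e$ on either side. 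Actually the cleanest route is: $B \in \bases(\matroid \setminus e)$ iff $B$ is a maximal member of $\{I \in \independents(\matroid) : e \notin I\}$, and a short exchange-axiom argument shows these maximal members are precisely the bases of $\matroid$ missing $e$ (when such exist).

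For part \ref{eq:delCircuits}, a circuit of $\matroid \setminus e$ is a minimal dependent subset of $E - e$. A subset $C \subseteq E - e$ is dependent in $\matroid \setminus e$ iff it is dependent in $\matroid$ (dependence of a set not containing $e$ is unaffected by deleting $e$), so the minimal dependent subsets of $E-e$ in $\matroid \setminus e$ are exactly the circuits of $\matroid$ contained in $E - e$. For part \ref{eq:delCocircuits}, I would dualize: $\cocircuits(\matroid \setminus e) = \circuits((\matroid \setminus e)^{*}) = \circuits(\matroid^{*} / e)$. Now apply the dual of the contraction-circuit fact: the circuits of a contraction $\matroid^{*} / e$ are the minimal nonempty sets of the form $C - e$ where $C$ ranges over circuits of $\matroid^{*}$, i.e. over cocircuits of $\matroid$. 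This yields exactly the stated description. (If one prefers to avoid invoking the contraction-circuit lemma as a black box, it follows from the same observation as part \ref{eq:delCircuits} applied in the dual together with the definition of contraction, but citing it as well-known — as the proposition's preamble does — is legitimate.)

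The main obstacle, such as it is, is bookkeeping around the degenerate cases: when $e$ is a coloop, $\matroid \setminus e$ has strictly smaller rank and there are no bases of $\matroid$ avoiding $e$, so \ref{eq:delBases} reads $\emptyset = \emptyset$ and one must check the rank drop does not break \ref{eq:delCircuits} or \ref{eq:delCocircuits} (it does not: a coloop lies in no circuit, and the fundamental cocircuit $\{e\}$ contributes the empty set which is excluded by the ``minimal nonempty'' clause). When $e$ is a loop, $\matroid \setminus e$ and $\matroid$ have the same rank and $\{e\}$ is itself a circuit, excluded from \ref{eq:delCircuits} since it is not contained in $E - e$. Handling these two extremes explicitly is the only place care is needed; everything else is immediate from the definitions.
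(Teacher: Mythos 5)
The paper offers no proof of this proposition (it is quoted as a well-known fact), so the only question is whether your argument is sound. Parts \ref{eq:delCircuits} and \ref{eq:delCocircuits} are fine: dependence of a subset of $E-e$ is unaffected by deleting $e$, and dualizing via $\cocircuits(\matroid \setminus e) = \circuits(\matroid^{*}/e)$ together with the standard description of the circuits of a contraction (which the paper itself records later as Proposition \ref{proposition:basicContractionFacts}) gives exactly the stated description. Part \ref{eq:delBases} is also fine in the case the paper actually uses, namely when $e$ is not a coloop: then a basis of $\matroid$ avoiding $e$ exists, it has rank $r(\matroid) = r(\matroid \setminus e)$, and the maximal independent sets of $\matroid$ contained in $E-e$ are precisely the bases of $\matroid$ missing $e$.

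The genuine error is your treatment of the coloop case. You claim that if $e$ is a coloop then ``there are no bases avoiding $e$ on either side'' and that \ref{eq:delBases} reads $\emptyset = \emptyset$. That is false: $\matroid \setminus e$ is a matroid, so $\bases(\matroid \setminus e)$ is never empty; when $e$ is a coloop its bases are exactly $\{B - e : B \in \bases(\matroid)\}$, a nonempty family of rank $r(\matroid)-1$, while the right-hand side of \ref{eq:delBases} is empty. So part \ref{eq:delBases} as literally stated simply fails for coloops, and your attempted rescue asserts something impossible (a matroid with no bases). The correct resolution is to observe that the identity holds precisely when $e$ is not a coloop, which is the only situation in which the paper invokes it: the proposition is applied to $e \notin B_{0}(\matroid)$ (such an $e$ lies outside some basis, hence is not a coloop), and loops and coloops are dispatched separately at the start of Section \ref{section:minors}. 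Your degenerate-case remarks for parts \ref{eq:delCircuits} and \ref{eq:delCocircuits} are correct — a coloop lies in no circuit, and the empty set produced by the cocircuit $\{e\}$ is excluded by the ``minimal nonempty'' clause — so the needed fix is local to part \ref{eq:delBases}.
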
 

As any set $B \subseteq E-e$ that is a basis of $\cN$ is also a basis of $\matroid$, it is helpful to introduce notation to clarify the matroid of which we are considering $B$ a basis.
For a basis $B = S^{T}_{A} \in \bases(\cN)$ we write $B' = S'^{T'}_{A'}$ for the corresponding basis of~$\matroid$.
Define the set $X(B')$ to be the set of all internally passive elements $b$ of $B'$ such that $\{e' \in C^{*}(B';b) \suchthat e' < b\} = \{e\}$.
The next lemma computes the internally active (respectively, passive) elements of a basis $B \in \bases(\cN)$ in terms of the active (respectively, passive) elements of $B'$.


	\begin{lemma}
	\label{lemma:IA/IPBofDeletion}
		Let $B \in \bases(\cN)$ and $B'$ be the corresponding basis in $\bases(\matroid)$.
		Then we have $\IP(B) = \IP(B')-X(B')$ and $\IA(B) = \IA(B')\cup X(B')$.	
		Moreover, whenever~$e \notin B_{0}(\matroid)$, then $X = \emptyset$, i.e., the $\text{STA}$-decompositions of $B$ and $B'$ coincide.	
	\end{lemma}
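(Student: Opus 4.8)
The plan is to unwind the definition of internal activity in terms of fundamental cocircuits and compare the cocircuit structure of $\matroid$ with that of $\cN = \matroid - e$ using Proposition \ref{proposition:delBasicFacts}\ref{eq:delCocircuits}. Fix $B \in \bases(\cN)$ and let $B'$ be the same set viewed as a basis of $\matroid$ (legitimate by Proposition \ref{proposition:delBasicFacts}\ref{eq:delBases}). The key observation is that for $b \in B'$, the fundamental cocircuit $C^{*}_{\cN}(B;b)$ is obtained from $C^{*}_{\matroid}(B';b)$ by the rule in part \ref{eq:delCocircuits}: since $E(\cN) - B = (E - B') - e$, the cocircuit $C^{*}_{\cN}(B;b)$ is the unique minimal nonempty set of the form $C^{*} - e$ with $C^{*} \in \cocircuits(\matroid)$ and $b \in C^{*} - e$; and because fundamental cocircuits are unique and $C^{*}_{\matroid}(B';b) \subseteq E - B' \cup b$, one checks that $C^{*}_{\cN}(B;b) = C^{*}_{\matroid}(B';b) - e$ (this minimal set is already a cocircuit of $\cN$, so no further shrinking occurs).

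Next I would feed this into the definition of internal activity. An element $b \in B$ is internally active in $\cN$ iff $b = \min C^{*}_{\cN}(B;b) = \min\bigl(C^{*}_{\matroid}(B';b) - e\bigr)$, whereas $b$ is internally active in $\matroid$ iff $b = \min C^{*}_{\matroid}(B';b)$. Comparing these two conditions: if $b$ is internally active in $\matroid$, then $b$ is below every element of $C^{*}_{\matroid}(B';b)$, hence below every element of $C^{*}_{\matroid}(B';b) - e$, so $b$ is internally active in $\cN$; thus $\IA(B') \subseteq \IA(B)$. Conversely, if $b$ is internally passive in $\matroid$ but internally active in $\cN$, then removing $e$ from $C^{*}_{\matroid}(B';b)$ turns $b$ into the minimum, which forces $e$ to be the unique element of $C^{*}_{\matroid}(B';b)$ strictly below $b$ — i.e., $\{e' \in C^{*}(B';b) \suchthat e' < b\} = \{e\}$, which is exactly the condition defining $b \in X(B')$. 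This shows $\IA(B) = \IA(B') \cup X(B')$, and since $\IP$ and $\IA$ partition the basis, taking complements within $B = B'$ gives $\IP(B) = \IP(B') - X(B')$.

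For the final sentence, suppose $e \notin B_{0}(\matroid)$; I must show $X(B') = \emptyset$. Suppose $b \in X(B')$. Then $e$ is the unique element of $C^{*}_{\matroid}(B';b)$ below $b$; in particular, if $b = \min C^{*}_{\matroid}(B';b)$ then $X$-membership would be vacuous, so $b$ is internally passive in $\matroid$ and $e < b$ with $e \in C^{*}_{\matroid}(B';b)$. But then $b \in C^{*}_{\matroid}(B';b)$ and $e \notin B'$, and the equivalence recalled in the preliminaries (the "pivoting" equivalence: $b \in C^{*}(B';b) = C^{*}(B' ; e)$... more precisely, for $b \in B'$, $e \notin B'$ with $e \in C^{*}(B';b)$, the set $B' - b \cup e$ is a basis and $e \in C(B'-b\cup e\,;\,b)$) shows $B'' := B' - b \cup e$ is a basis of $\matroid$ containing $e$. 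Since $e < b$, the basis $B''$ is obtained from $B'$ by an internally active pivot in the $\intOrder$-downward direction only if $e$ is internally active in $B''$; whether or not that holds, $e \in B''$ and $e \notin B_0$, so by Corollary \ref{corollary:activeInB0} $e$ is internally passive in $B''$, and then lexicographic considerations show $B'$ is not the lexicographically-least basis containing $B' \cap B''$, contradicting nothing directly — so instead I would argue more cleanly: since $e \notin B_0$ and $e$ is the minimum of $C^{*}_{\matroid}(B';b)$ restricted below $b$, one shows $e$ can be used to replace $b$ and repeating drives us toward $B_0$, but $B_0$ cannot contain $e$; the cleanest route is to observe that $e \notin B_0$ means $e$ is internally passive in every basis containing it (this is precisely Corollary \ref{corollary:activeInB0}), and use the explicit description $B_0 = \minBasis(\emptyset)$ together with the characterization of $\minBasis$ via minimal elements of cocircuits to contradict $b \in X(B')$.

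The main obstacle I anticipate is the last paragraph: cleanly deriving $X(B') = \emptyset$ from $e \notin B_0$ without circular reasoning. The identity $C^{*}_{\cN}(B;b) = C^{*}_{\matroid}(B';b) - e$ and the activity comparison are routine once the cocircuit bookkeeping is set up; the subtlety is packaging the "$e \notin B_0$" hypothesis — likely one wants to invoke that $e \notin B_0$ forces $e$ to never be the minimum of any cocircuit contained in $E - B_0 \cup e$, hence $e$ is never internally active, and then trace through why this prevents $e$ from being the unique sub-$b$ element of a fundamental cocircuit of a basis of $\matroid$.
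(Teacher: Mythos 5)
Your handling of the two displayed equalities is correct and is essentially the paper's argument: everything reduces to the identity $C^{*}_{\cN}(B;b) = C^{*}_{\matroid}(B';b) - e$ (which the paper asserts and you justify via Proposition \ref{proposition:delBasicFacts}) together with reading off internal activity from the minimum of the fundamental cocircuit. The genuine gap is in the final claim, and you flag it yourself: starting from $b \in X(B')$ with $e \notin B_{0}(\matroid)$ you construct the pivot $B'' = B' - b \cup e$ but then concede you are ``contradicting nothing directly'' and fall back on unexecuted ``lexicographic considerations.'' The missing step is short: since $(E - B'') \cup e = (E - B') \cup b$, the fundamental cocircuits $C^{*}_{\matroid}(B'';e)$ and $C^{*}_{\matroid}(B';b)$ are each the unique cocircuit of $\matroid$ contained in this common set, hence they coincide; and because the only element of $C^{*}_{\matroid}(B';b)$ smaller than $b$ is $e$, every element of this cocircuit other than $e$ is at least $b > e$, so $e = \min C^{*}_{\matroid}(B'';e)$. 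Thus $e$ would be internally active in $B''$, which contradicts Corollary \ref{corollary:activeInB0} precisely because $e \notin B_{0}(\matroid)$. This one computation is the entire content of the paper's last paragraph, and without it your proof of $X(B') = \emptyset$ is incomplete (your observation that $e$ is internally passive in $B''$ points in the wrong direction: the contradiction comes from showing $e$ would have to be \emph{active} there).

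A second, smaller omission: the lemma's conclusion that the $\STA$-decompositions of $B$ and $B'$ coincide does not follow from $\IA(B)=\IA(B')$ and $\IP(B)=\IP(B')$ alone, since $S$ and $T$ are defined relative to the initial basis. You also need the observation (made explicitly in the paper) that $B_{0}(\cN) = B_{0}(\matroid)$ whenever $e \notin B_{0}(\matroid)$, which holds because $B_{0}(\matroid)$ survives the deletion and remains lexicographically least.
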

	\begin{proof}
		An element $b$ is internally passive in $B'$ if and only if $b \neq \min C^{*}(B';f)$.
		Since $C^{*}(B;f) = C^{*}(B';f)-e$, it follows that $b \in \IP(B)$ \Iff $b \in \IP(B')$ and $e$ is not the only element of the set $\{e' \in C^{*}(B';b) \suchthat e' < b\}$, proving the first claim.
		The second claim follows immediately since $\IA(B) = B - \IP(B)$.

		Now suppose $e \notin B_{0}(\matroid)$.
		Then $e$ is not internally active in any basis in which it occurs.
		We claim that this implies $X(B') = \emptyset$.
		Otherwise there is a $b \in B'$ such that $\{e' \in C^{*}(B';b) \suchthat e' < b\} = \{e\}$, from which it follows that $e$ is internally active in the basis $B' - b \cup e$.
		This contradiction shows that $X(B')$ must be empty and hence that $\IP(B) = \IP(B')$ and $\IA(B) = \IA(B')$.
		The fact that the~$\STA$-decompositions of $B$ and $B'$ coincide now follows from the observation that $B_{0}(\cN) = B_{0}(\matroid)$ whenever $e \notin B_{0}(\matroid)$.
	\end{proof}	


With the previous lemma in hand it is easy to see that $\cN$ is perfect whenever~$\matroid$ is perfect and $e \notin B_{0}(\matroid)$.
Let $B=S^{T}_{A} \in \bases(\cN)$ and $B'=S'^{T'}_{A'}$ for the corresponding basis in $\bases(\matroid)$. 
By Lemma \ref{lemma:IA/IPBofDeletion}, the bases $B$ and $B'$ have identical~$\STA$-decompositions and so
			\[T = T' = \disjointUnion_{f\in S'} T(B';f) = \disjointUnion_{f\in S} T(B;f)\]
where the last equation follows from the fact that 
	\[T(B';f) = T(\minBasis_{\matroid}(f \cup T')) = T(\minBasis_{\cN}(f \cup T)) = T(B;f).\]			
Thus we have proven the following corollary.

	\begin{corollary}
	\label{corollary:delIsPerfect}
		Let $\matroid$ be an internally perfect ordered matroid.
		Then $\cN = \matroid - e$ is an internally perfect ordered matroid with respect to the ordering on the ground set of $\matroid$ whenever $e \in E - B_{0}(\matroid)$.
	\end{corollary}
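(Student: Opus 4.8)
The plan is to verify internal perfectness one basis at a time, transporting the known perfectness of $\matroid$ across the deletion. So I would fix an arbitrary basis $B = S^{T}_{A} \in \bases(\cN)$ and let $B' = S'^{T'}_{A'}$ be the same set regarded as a basis of $\matroid$, which is legitimate by part \ref{eq:delBases} of Proposition \ref{proposition:delBasicFacts}. Since $e \notin B_{0}(\matroid)$, the second assertion of Lemma \ref{lemma:IA/IPBofDeletion} tells us that the $\STA$-decompositions of $B$ and $B'$ coincide and that $B_{0}(\cN) = B_{0}(\matroid)$; in particular $S = S'$ and $T = T'$.

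The substantive step is to show that the $f$-parts of $T$ are likewise unaffected by the deletion, that is, $T(B;f) = T(B';f)$ for every $f \in S$. The key observation is that $\minBasis_{\matroid}(I) = \minBasis_{\cN}(I)$ for every independent set $I \subseteq E - e$: writing $\minBasis_{\matroid}(I) = I \cup J$ with $J$ the set of minimal elements of cocircuits of $\matroid$ contained in $E - I$, we have $J \subseteq B_{0}(\matroid)$, so $e \notin J$ and $e \notin I$, whence $\minBasis_{\matroid}(I)$ is a basis of $\cN$ by part \ref{eq:delBases} of Proposition \ref{proposition:delBasicFacts}; being lex-least among bases of $\matroid$ containing $I$, it is a fortiori lex-least among bases of $\cN$ containing $I$. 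Applying this with $I = \{f\} \cup T$ and then invoking the first assertion of Lemma \ref{lemma:IA/IPBofDeletion} once more to the basis $\minBasis_{\cN}(\{f\} \cup T)$ (again $e \notin B_{0}$, so its $\STA$-decomposition is the same in $\cN$ and $\matroid$), we get $T(B;f) = T(\minBasis_{\cN}(\{f\} \cup T)) = T(\minBasis_{\matroid}(\{f\} \cup T)) = T(B';f)$.

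Combining the two steps finishes the proof: as $\matroid$ is internally perfect, $B'$ is perfect, so $T' = \disjointUnion_{f \in S'} T(B';f)$, and substituting $S = S'$, $T = T'$, and $T(B;f) = T(B';f)$ yields $T = \disjointUnion_{f \in S} T(B;f)$, i.e.\ $B$ is internally perfect. Since $B$ was arbitrary and $B_{0}(\cN) = B_{0}(\matroid)$, the matroid $\cN$ is internally perfect with respect to the induced order. I expect the only point requiring any care to be the identity $\minBasis_{\matroid}(I) = \minBasis_{\cN}(I)$ for $I \subseteq E - e$, since that is exactly what makes the $f$-parts transfer; everything else is bookkeeping already handed to us by Lemma \ref{lemma:IA/IPBofDeletion}. (In fact most of this reasoning appears in the paragraph preceding the statement, so writing out the corollary's proof is largely a matter of assembling those observations.)
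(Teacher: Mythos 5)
Your proposal is correct and follows essentially the same route as the paper: transfer the $\STA$-decomposition across the deletion via Lemma \ref{lemma:IA/IPBofDeletion} and then identify the $f$-parts through the equality $T(\minBasis_{\matroid}(f \cup T)) = T(\minBasis_{\cN}(f \cup T))$. Your explicit justification that $\minBasis_{\matroid}(I) = \minBasis_{\cN}(I)$ for $I \subseteq E - e$ (because $\minBasis_{\matroid}(I) - I \subseteq B_{0}(\matroid)$ avoids $e$) is exactly the fact the paper leaves implicit, so this is the same proof, just slightly more detailed.
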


Taken together, Corollary \ref{corollary:delIsPerfect} and the remarks at the end of the introduction to this section show that $\cN = \matroid - e$ is perfect whenever $e$ is a (co)loop or is not an element of the initial basis of $\matroid$.
This result is the best possible in the sense that there are internally perfect ordered matroids for which deleting an element of their initial bases yields an ordered matroid that is not perfect with respect to the order on the original matroid, as the next example shows.

	\begin{example}
	\label{example:notMinorClosed}
		Consider the ordered vector matroid $\matroid$ on seven elements given by the matrix
			\[M:=
			\begin{pmatrix}
				1 & 0 & 0 & 0 & 0 & 0 &  1 \\
				0 & 1 & 0 & 0 & 1 & 1 &  2 \\
				0 & 0 & 1 & 0 & 0 & 1 &  0 \\
				0 & 0 & 0 & 1 & 0 & 0 & -1 		
			\end{pmatrix}.
			\]
		One can verify that each of the 3 maximal bases in $\poset_{\Int}(\matroid)$ are perfect, and hence that $\matroid$ is internally perfect by Proposition \ref{proposition:perfectionFiltersDown}.
		Note that the second column is neither a loop nor coloop of $\matroid$ and that $B = \{1,4,6,7\}$ is a basis of $\cN = \matroid- \{2\}$.
		The $\STA$-decomposition of $B$ is $67^{4}_{1}$ but $T(B;6) = T(B;7) = \emptyset$ and so $\cN$ is not internally perfect with respect to the order of its ground set induced by the order of $\matroid$.	  

		Though the ordered matroid $\cN$  is not internally perfect with respect to the linear order $E(\cN) = (1,3,4,5,6,7)$, the underlying unordered matroid is internally perfect.
		To see this note that the elements $2$ and $5$ of $\matroid$ are parallel and hence the unordered matroids $\matroid-\{2\}$ and $\matroid - \{5\}$ are isomorphic.
		Since $5$ is not in $B_{0}(\matroid)=\{1,2,3,4\}$, Corollary \ref{corollary:delIsPerfect} implies that $\matroid-\{5\}$ is internally perfect.
		Thus the ordered matroid $\cN' = \matroid - \{2\}$ with $E(\cN') = (1,5,3,4,6,7)$ is internally perfect
	\end{example}

	It is natural to ask whether the phenomenon witnessed in the previous example is borne out in the general case.
	More precisely, is there always a reordering of the ground set of the ordered matroid $\cN = \matroid - e$ (where $e \in B_{0}(\matroid)$) such that $\cN$ is internally perfect?
	Computational evidence verifies that this is true for matroids with small rank ($r(\matroid) \le 4$).
	Establishing this in the general case would prove the following conjecture. 

	\begin{conjecture}
	\label{conjecture:unorderedDelAlwaysPerfect}
		The class of unordered internally perfect matroids is closed under deletion.
	\end{conjecture}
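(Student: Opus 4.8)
The conjecture is the deletion case of the broader expectation, voiced just before it, that a minor of a perfect matroid can always be reordered so as to be perfect; the plan is to reduce it to Corollary~\ref{corollary:delIsPerfect} by a careful choice of ordering. Fix an unordered internally perfect matroid $\matroid = (E,\bases)$ and an element $e \in E$; we must exhibit a linear order on $E - e$ for which $\matroid \setminus e$ is internally perfect. Pick a linear order $\phi$ witnessing the internal perfection of $\matroid$, and write $B_{0}(\matroid,\phi)$ for the corresponding initial basis. If $e$ is a loop or a coloop, the opening remarks of Section~\ref{section:minors} already give that $\matroid \setminus e$ is internally perfect; and if $e \notin B_{0}(\matroid,\phi)$, then Corollary~\ref{corollary:delIsPerfect} gives it with the induced order. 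Hence the entire problem collapses onto the case in which $e$ is neither a loop nor a coloop but lies in $B_{0}(\matroid,\psi)$ for \emph{every} witnessing order $\psi$.

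The most direct attack is to show that this bad case never occurs, i.e.\ that an internally perfect matroid always admits a witnessing order $\psi$ with $e \notin B_{0}(\matroid,\psi)$ whenever $e$ is not a coloop, after which Corollary~\ref{corollary:delIsPerfect} finishes. Since $e$ is not a coloop it lies on a circuit $C$, and any order placing the elements of $C - e$ before $e$ forces $e \in \closure(\{f : f <_{\psi} e\})$ and hence $e \notin B_{0}(\matroid,\psi)$; such a target order is reachable from $\phi$ by a sequence of transpositions of consecutive ground-set elements, each of which either fixes $B_{0}$ or replaces it by a single pivot. So the real content is a \textbf{stability lemma}: a consecutive transposition that does not reorder any element of $E-e$ relative to $e$ in a way that disturbs the decomposition behaviour of the bases avoiding $e$ should carry a witnessing order to a witnessing order. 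I would try to prove this by tracking, basis by basis, how the $\STA$-decomposition (in the spirit of Lemma~\ref{lemma:IA/IPBofDeletion}) and the unique join-into-principal-bases decomposition (Proposition~\ref{proposition:joinsOfProjections}) change under the transposition, and showing that for bases not meeting $e$ neither is affected.

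I expect this stability lemma to be the crux, and the reason the statement is still only a conjecture: internal perfection is genuinely order-sensitive (Example~\ref{example:perfectNotPerfect}) and can fail for \emph{every} order (Example~\ref{example:notPerfectAnyOrder}), so one really must understand the combinatorial structure of the set of witnessing orders rather than hope for a soft argument — and the naive transposition path from $\phi$ to the target order may pass through non-witnessing orders, so the lemma must be robust enough to route around them or the target order must be reached by some other means. If the transposition approach proves too delicate, the fallback is to stop reordering $\matroid$ and build a witnessing order for $\matroid \setminus e$ directly: Proposition~\ref{proposition:perfectionFiltersDown} reduces internal perfection to a finite condition on the coatoms of $\poset_{\Int}(\matroid \setminus e)$, which one could try to control using the pure order ideal furnished by Theorem~\ref{theorem:perfectImpliesStanley} for $\matroid$, together with the parallel-element trick of Example~\ref{example:notMinorClosed} to dispatch the case where $e$ has a parallel partner outside $B_{0}$ and a separate argument for the genuinely new cases.
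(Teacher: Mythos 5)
This statement is left open in the paper: it is Conjecture \ref{conjecture:unorderedDelAlwaysPerfect}, supported there only by brute-force computation for matroids of rank at most $4$, so there is no proof of record to compare yours against. Your proposal is likewise not a proof but a plan, and you say so yourself; the honest self-assessment is appreciated, but the gap is real and sits exactly where you locate it. Your reduction is sound and matches what the paper already establishes: loops and coloops are handled by the remarks opening Section \ref{section:minors}, and $e \notin B_{0}(\matroid,\phi)$ is Corollary \ref{corollary:delIsPerfect}, so everything hinges on a non-coloop $e$ lying in the initial basis of every witnessing order. Your observation that ordering all of $C-e$ before $e$ forces $e \notin B_{0}$ is also correct.

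The unproved step, however, is not a technical ``stability lemma'' one would expect to fall to bookkeeping; it is essentially the whole conjecture, and in fact your ``direct attack'' asks for something strictly stronger than what is needed. You want every non-coloop $e$ of a perfect matroid to avoid the initial basis of \emph{some} witnessing order of $\matroid$ itself; the conjecture only requires a good order on $E-e$, which need not be induced from, or even compatible with, any witnessing order of $\matroid$ (Example \ref{example:notMinorClosed} is resolved in the paper by exploiting a parallel element, i.e.\ an isomorphism of the deletion, not by reordering $\matroid$). Nothing in the paper's machinery (Lemma \ref{lemma:IA/IPBofDeletion}, Proposition \ref{proposition:joinsOfProjections}, Proposition \ref{proposition:perfectionFiltersDown}) controls how the set of internally perfect bases changes under a transposition of consecutive ground-set elements, and Examples \ref{example:perfectNotPerfect} and \ref{example:notPerfectAnyOrder} show this dependence on the order is genuinely delicate; so neither the existence of the target witnessing order nor the transposition-by-transposition tracking is established, and your fallback via coatoms of $\poset_{\Int}(\matroid\setminus e)$ is also only a suggestion. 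As it stands, the proposal reproves the known partial cases and correctly isolates the open core, but it does not close it.
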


\subsection{Contractions of Perfect Matroids}
\label{PMcontractions}

	We have seen that internal perfection is preserved when deleting elements from a perfect matroid as long as the elements deleted do not belong to the initial basis.
	Our present goal is to show that internal perfection and matroid contraction are more compatible in the sense that any contraction of an internally perfect is internally perfect.
	As before, we fix an ordered internally perfect matroid~$\matroid = (E, \bases, \phi)$ and an element $e \in E$.
	The proof presented here is essentially a detailed analysis of $\STA$-decompositions of the minimal bases $\minBasis_{\cN}(I)$ and $\minBasis_{\matroid}(I)$, where $\cN = \matroid/e$ and $I$ is an independent set of $\cN$.

	First we collect some well-known results concerning matroid contractions (of unordered matroids).
	\begin{proposition}
	\label{proposition:basicContractionFacts}
		Let $\matroid = (E, \matroid)$ be a matroid, $e \in E$, and $\cN = \matroid/e$. Then
			\begin{enumerate}[label=(\alph*), font = \upshape]
				\item \label{cont1}  a set $I \subseteq E-e$ is independent in $\cN$ \Iff $I\cup e$ is independent in~$\matroid$;
				\item \label{cont2} the circuits of $\cN$ are the minimal nonempty sets of~$\{C- e \suchthat C \in \circuits(\matroid)\}$; and
				\item \label{cont3} the set of cocircuits of $\cN$ is $\{C^{*} \subseteq E-e \suchthat C^{*}\in \cocircuits(\matroid)\}$.
			\end{enumerate}
	\end{proposition}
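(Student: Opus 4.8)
The three assertions are classical; see, e.g., \cite{oxley1992matroid}. The plan is to read them off from the definition $\cN=\matroid/e=(\matroid^{*}\setminus e)^{*}$ together with the deletion facts already collected in Proposition~\ref{proposition:delBasicFacts} and the duality principle that the circuits of any matroid are precisely the cocircuits of its dual (equivalently, the cocircuits of a matroid are the circuits of its dual).

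First I would prove~\ref{cont3}. By definition the cocircuits of $\cN$ are the circuits of $\cN^{*}=\matroid^{*}\setminus e$; applying part~\ref{eq:delCircuits} of Proposition~\ref{proposition:delBasicFacts} with $\matroid^{*}$ in place of $\matroid$ identifies these with the members of $\circuits(\matroid^{*})$ that avoid $e$, which are exactly the cocircuits of $\matroid$ contained in $E-e$. I would then prove~\ref{cont2} in the same way: the circuits of $\cN$ are the cocircuits of $\cN^{*}=\matroid^{*}\setminus e$, and part~\ref{eq:delCocircuits} of Proposition~\ref{proposition:delBasicFacts} applied to $\matroid^{*}$ shows these are the minimal nonempty members of $\{D^{*}-e\suchthat D^{*}\in\cocircuits(\matroid^{*})\}$, which by duality is precisely $\{C-e\suchthat C\in\circuits(\matroid)\}$.

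Finally I would settle~\ref{cont1}. Since $\matroid$ is loopless in this section, $e$ is not a loop, so $\rank_{\matroid}(\{e\})=1$ and the standard rank identity for contraction gives $\rank_{\cN}(X)=\rank_{\matroid}(X\cup e)-1$ for every $X\subseteq E-e$. Hence, for $I\subseteq E-e$, the set $I$ is independent in $\cN$ iff $\rank_{\matroid}(I\cup e)=|I|+1=|I\cup e|$, i.e.\ iff $I\cup e$ is independent in $\matroid$, which is~\ref{cont1}. (Alternatively, \ref{cont1} can be extracted from~\ref{cont2}: if $I\cup e$ is independent in $\matroid$ it contains no set $C-e$ with $C\in\circuits(\matroid)$ and hence no circuit of $\cN$, while conversely any circuit $C$ of $\matroid$ inside $I\cup e$ must use $e$ when $I$ is independent, so $\emptyset\neq C-e\subseteq I$ would contain a circuit of $\cN$.)

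I do not expect a real obstacle here, as the whole proposition is bookkeeping layered on Proposition~\ref{proposition:delBasicFacts}. The one point deserving a word of care is the degenerate behaviour of~\ref{cont1} when $e$ is a loop (then $\cN=\matroid\setminus e$ while $I\cup e$ is never independent in $\matroid$), but this case does not arise in the present setting since $\matroid$ is assumed loopless throughout, and in general one simply restricts attention to non-loops, as is standard.
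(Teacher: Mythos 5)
Your proof is correct. The paper offers no argument for this proposition at all --- it is recorded as a collection of well-known facts (implicitly deferring to \cite{oxley1992matroid}) --- so there is no authorial proof to compare against; your derivation is a legitimate way to fill that gap, and it is economical in that it leans only on material already in the paper: parts \ref{eq:delCircuits} and \ref{eq:delCocircuits} of Proposition \ref{proposition:delBasicFacts}, the definition $\matroid/e=(\matroid^{*}\setminus e)^{*}$, and circuit--cocircuit duality, plus the standard rank identity for \ref{cont1}. The one substantive point is the one you already flag: as literally stated, \ref{cont1} fails when $e$ is a loop (then $\matroid/e=\matroid\setminus e$ and $I\cup e$ is never independent), so the rank argument genuinely needs $e$ to be a non-loop; since the matroids in play are taken loopless (and the paper only ever applies the proposition in that setting), your caveat disposes of this correctly. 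Your parenthetical alternative derivation of \ref{cont1} from \ref{cont2} also quietly uses that $C-e\neq\emptyset$ for a circuit $C\ni e$, i.e.\ again that $e$ is not a loop --- worth stating explicitly if you keep that variant --- but the main argument as written is complete.
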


	As $\matroid = (E, \bases, \phi)$ is an ordered matroid, the restriction of $\phi$ to $E-e$ makes the contraction $\cN:= \matroid/e$ an ordered matroid, which in turn allows to talk about internal activity and passivity in $\cN$.
	
	Let $B$ be a basis of $\cN$.
	Recall from Section \ref{subsection:internalOrder} that an element $b \in B$ is internally active in $\matroid$ \wrt $B$ \Iff there is a cocircuit $C^{*}$ of $\matroid$ contained in $(E-\{e\})-B \cup b$ such that $b = \min C^{*}$.
	It follows from Proposition~\ref{proposition:basicContractionFacts}\ref{cont3} that such a cocircuit $C^{*}$ of $\cN$ is also a cocircuit of $\matroid$.
	Moreover, since $b = \min C^{*} \subseteq E - (B \cup e) \cup b$, we have that $b \in \IA_{\matroid}(B \cup e)$.
	Since any basis is the disjoint union of its sets of internally active and internally passive elements, we have the shown the following proposition.


	\begin{proposition}
	\label{proposition:IAandIPofContractions}
	Let $\matroid$ be an ordered matroid, $e \in E(\matroid)$, and $\cN=\matroid/e$.
	Then, for any basis $B$ of $ \cN$, we have 
	\begin{enumerate}[label=(\alph*), font = \upshape]
	 	\item $\IA_{\cN}(B) \subseteq \IA_{\matroid}(B \cup e)$ with equality \Iff $e \notin \IA(B \cup e)$, and
	 	\item $\IP_{\cN}(B) \subseteq \IP_{\matroid}(B \cup e)$ with equality \Iff $e \in \IA(B \cup e)$.
	\end{enumerate}
	\end{proposition}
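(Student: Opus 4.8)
The plan is to determine the set $\IA_{\cN}(B)$ precisely as a subset of $\IA_{\matroid}(B\cup e)$, and then read off both inclusions together with their equality conditions in (a) and (b) by complementation, using throughout that for any ground-set subset $F$ one has $\IP(F)=F\setminus\IA(F)$, both in $\matroid$ and in $\cN$. Note first that $e\notin B$, since the ground set of $\cN=\matroid/e$ is $E-e$.

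The key step I would establish is the set identity $\IA_{\matroid}(B\cup e)\cap B=\IA_{\cN}(B)$. One inclusion is exactly the content of the paragraph preceding the statement: if $b\in B$ is internally active in $\cN$ with respect to $B$, then a cocircuit of $\cN$ witnessing this is also a cocircuit of $\matroid$ by Proposition \ref{proposition:basicContractionFacts}\ref{cont3}, and it is contained in $(E-e)-B\cup b\subseteq E-(B\cup e)\cup b$, so $b\in\IA_{\matroid}(B\cup e)$. For the reverse inclusion, restricted to $B$, I would take $b\in B\cap\IA_{\matroid}(B\cup e)$, witnessed by a cocircuit $D^{*}$ of $\matroid$ with $b=\min D^{*}$ and $D^{*}\subseteq E-(B\cup e)\cup b$; because $b\neq e$ this forces $D^{*}\subseteq E-e$, so Proposition \ref{proposition:basicContractionFacts}\ref{cont3} again makes $D^{*}$ a cocircuit of $\cN$, and it witnesses $b\in\IA_{\cN}(B)$.

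Granting this identity, (a) follows by writing $\IA_{\matroid}(B\cup e)$ as the disjoint union of $\IA_{\matroid}(B\cup e)\cap B=\IA_{\cN}(B)$ and $\IA_{\matroid}(B\cup e)\cap\{e\}$: the inclusion $\IA_{\cN}(B)\subseteq\IA_{\matroid}(B\cup e)$ always holds and is an equality precisely when $e\notin\IA(B\cup e)$. For (b) I would complement inside the respective ground sets, splitting into the two cases $e\in\IA(B\cup e)$ and $e\notin\IA(B\cup e)$; in the first, $\IA_{\matroid}(B\cup e)=\IA_{\cN}(B)\cup\{e\}$ gives $\IP_{\matroid}(B\cup e)=(B\cup e)\setminus(\IA_{\cN}(B)\cup\{e\})=\IP_{\cN}(B)$, and in the second, $\IP_{\matroid}(B\cup e)=\IP_{\cN}(B)\cup\{e\}\supsetneq\IP_{\cN}(B)$, so the inclusion holds with equality exactly when $e\in\IA(B\cup e)$. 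There is no genuine obstacle here; the only point requiring a moment's care is verifying that the cocircuit of $\matroid$ witnessing the activity of an element $b\in B$ avoids $e$, and hence descends to a cocircuit of $\cN$, which is automatic since $b\neq e$.
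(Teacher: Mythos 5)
Your proposal is correct and follows essentially the same route as the paper: both rest on the cocircuit characterization of internal activity together with Proposition \ref{proposition:basicContractionFacts}\ref{cont3} (a witnessing cocircuit avoids $e$ since its minimum $b\neq e$ lies in $B$), and then obtain (b) by complementing within the basis. The only difference is that you spell out the reverse inclusion $B\cap\IA_{\matroid}(B\cup e)\subseteq\IA_{\cN}(B)$ explicitly, which the paper leaves implicit in its compressed discussion preceding the statement.
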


	Let us illustrate Proposition \ref{proposition:IAandIPofContractions} in the case in which it will be predominantly used, namely, when $B = \minBasis_{\cN}(I)$ for some independent set $I \in \cN$.
	In this case the basis $B'' := B \cup e \in \bases(\matroid)$ is equal to $\minBasis_{\matroid}(I \cup e)$, and the proposition tells us that internally active (respectively, passive) elements of $B$ remain active (respectively, passive) in $B \cup e$.

	There is a second basis of $\matroid$ that is natural to consider when dealing with the basis $B = \minBasis_{\cN}(I)$, namely, $B' := \minBasis_{\matroid}(I)$.
	Note that if $e \in B'$, then the bases $B'$ and $B'' = \minBasis_{\matroid}(I \cup e)$ coincide.
	On the other hand, if $e \notin B'$, then evidently $B' \lessLex B''$.
	In this case, since the basis $B$ is contained in  $B'\cap B''$, we have $B'' = B' - a \cup e$ for some $a \in B' - (I \cup e)$.
	Moreover, as $B''$ is the minimum basis in $\matroid$ containing $I \cup e$, the element $a$ must equal $\max(C(B';e) - (I \cup \{e\}))$.
	It is then straightforward to show that $a$ is an internally active element of $B'$.
	The next proposition records these facts.

	\begin{proposition}
	\label{proposition:relateMinBas}
		Let $\matroid = (E,\bases, \phi)$ be an internally perfect matroid, $e \in E$, and let $\cN := \matroid/e$.
		For $I \in \independents(\matroid)$ such that $I \cup e$ is also independent, let $B'$ denote the basis $\minBasis_{\matroid}(I)$ and write $J = I - e$.
		Then
			\[
				\minBasis_{\cN}(J) = 
					\begin{cases}
						B' - e \text{ if $e \in B'$, and} 
						\\ B' - a \text{ otherwise}
					\end{cases}
			\]
		where $a := \max \left(C(B';e) - (I \cup \{e\}) \right)$ is an internally active element of $B'$.
	\end{proposition}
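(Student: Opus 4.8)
The plan is to argue directly from the defining property of minimum bases, namely that $\minBasis_{\matroid}(I)$ is the lexicographically-least basis of $\matroid$ containing $I$, together with the characterization recalled at the start of Section \ref{section:perfectAbundantDeficient}: a basis is a minimum basis of an independent set exactly when each of its elements outside that set is internally active. First I would dispose of the easy case $e \in B'$. If $e$ is already in $B' = \minBasis_{\matroid}(I)$, then $B'$ contains $I \cup e$, hence $B' = \minBasis_{\matroid}(I \cup e) = B''$ by minimality (any basis lex-below $B'$ containing $I \cup e$ would in particular contain $I$, contradicting the definition of $B'$). Then $B = \minBasis_{\cN}(J)$ is obtained from $B''$ by deleting the contracted element $e$: indeed $B'' - e$ is independent in $\cN$ by Proposition \ref{proposition:basicContractionFacts}\ref{cont1}, it contains $J = I - e$, and it is a basis of $\cN$ since $B''$ has the right cardinality; minimality transfers because the ground-set order of $\cN$ is the restriction of that of $\matroid$ and $B''$ was lex-least containing $I \cup e \supseteq J \cup e$.

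For the main case $e \notin B'$, the key observation is that $B' \cup e$ contains a unique circuit, the fundamental circuit $C(B';e)$, and that $B'' = \minBasis_{\matroid}(I \cup e)$ must be of the form $B' - a \cup e$ for some $a \in C(B';e) - e$ with $a \notin I$ (since $B''$ sits between $I \cup e$ and $B' \cup e$ in a single pivot — here I would invoke that $B''$ contains $I \cup e$, is lex-minimal among such bases, and that pivoting out any element of $C(B';e)-e$ from $B' \cup e$ gives a basis). To pin down $a = \max(C(B';e) - (I \cup e))$: pivoting out a \emph{larger} element of the fundamental circuit produces a lexicographically \emph{smaller} basis, so lex-minimality of $B''$ forces $a$ to be the largest available element, i.e.\ the largest element of $C(B';e)$ not forced to remain (those in $I$, and $e$ itself). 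Then, exactly as in the previous paragraph, $B = \minBasis_{\cN}(J) = B'' - e = B' - a$, using Proposition \ref{proposition:basicContractionFacts}\ref{cont1} to see $B' - a$ is independent (equivalently a basis) in $\cN$ and the restriction of the order to argue lex-minimality.

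It remains to show $a$ is internally active in $B'$. Here I would use the standard duality between fundamental circuits and fundamental cocircuits. Since $a \in C(B';e)$, pivoting gives $a \in C^{*}(B' - a \cup e; e) = C^{*}(B''; e)$, equivalently $e \in C^{*}(B'; a)$. To see $a = \min C^{*}(B';a)$, suppose some $a' \in C^{*}(B';a)$ satisfies $a' < a$; then $B' - a \cup a'$ is a basis of $\matroid$ lexicographically smaller than $B'$ while still containing $I$ (one must check $a' \neq$ any element of $I$: if $a'$ were in $I$ it could not be in the cocircuit $C^{*}(B';a) \subseteq (E - B') \cup a$ unless $a' = a$, contradiction), contradicting $B' = \minBasis_{\matroid}(I)$. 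Hence $a = \min C^{*}(B';a)$, so $a \in \IA_{\matroid}(B')$.

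The main obstacle I anticipate is the bookkeeping in the main case: carefully justifying that $B''$ differs from $B' \cup e$ by exactly one pivot within the fundamental circuit $C(B';e)$, and that lex-minimality selects the \emph{maximum} admissible element $a$ — the direction of the inequality (bigger element pivoted out $\leftrightarrow$ smaller basis) is the sort of thing that is easy to state backwards. Everything else (the two "transfer of minimality to the contraction" arguments, and the circuit/cocircuit duality showing $a$ is internally active) is routine once the order-restriction compatibility is noted. The internal-perfection hypothesis on $\matroid$ is not actually needed for this proposition and could be dropped, but since the proposition is used only in that setting I would leave the statement as is.
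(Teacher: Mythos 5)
Your plan is correct and follows essentially the same route as the paper, whose own justification is the sketch in the paragraph preceding the proposition: the easy case $e \in B'$ (where $B' = \minBasis_{\matroid}(I\cup e)$ and one passes to the contraction), the case $e \notin B'$ where $\minBasis_{\matroid}(I \cup e) = B' - a \cup e$ with $a = \max\left(C(B';e) - (I \cup \{e\})\right)$ forced by lexicographic minimality, and the observation that $a \in B' - I$ is internally active in $B' = \minBasis_{\matroid}(I)$. Your remark that internal perfection is never used is likewise consistent with the paper's (unwritten) argument.
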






	The bases $B, B',$ and $ B''$ described above satisfy relations that will be crucial in proving that the matroid $\cN$ is internally perfect.
	Let us write $B=S^{T}_{A}, B'=S'^{T'}_{A'}$, and $B''=S''^{T''}_{A''}$.
	The following technical lemmas provide the relations between the triples $(S,T,A), (S',T',A')$ and $(S'',T'',A'')$.
	First, in Lemma \ref{proposition:relateSTAwe}, we consider the easier case when $e \in B'$ (so that $(S',T',A')=(S'',T'',A'')$).
	Then we deal with the case when $e \notin B'$ in Lemma \ref{proposition:relateSTAnoe}.

	\begin{lemma}
	\label{proposition:relateSTAwe}
		Let $\matroid = (E,\bases, \phi)$ be a perfect ordered matroid, let $e \in E$, and let $\cN=\matroid/e$.
		Fix $I \in \independents(\cN)$ and let $B,B'$ and $B''$ be as above.
		If $e \in B'$, then $B' = B''$ and 
			\[
				(S,T,A) = 
				\begin{cases}
					(S',T',A'-e) \text{ if $e\in A'$,}
					\\(S',T'-e,A') \text{ if $e \in T'$,}
					\\(S'-e,T',A') \text{ if $e \in S'$ and $T(B';e)=\emptyset$, and}
					\\(S'-e \cup a,T'-a,A') \text{ if $e \in S'$ and $T(B';e)\neq\emptyset$}
				\end{cases}			
			\]
			where $a := \max(C(B_{0}(\matroid); e) - \{e\})$.
	\end{lemma}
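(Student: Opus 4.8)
The plan is to compute the $\STA$-decomposition of $B = \minBasis_{\cN}(I)$ directly from that of $B' = \minBasis_{\matroid}(I) = B''$ by tracking how internal activity of each element $b \in B = B' - e$ changes when we pass from $\matroid$ to $\cN = \matroid/e$. The governing principle is Proposition \ref{proposition:IAandIPofContractions}: since here $e \in B'$, we are in the case $B'' = B \cup e$, and an element $b \in B$ is internally active in $\cN$ \wrt $B$ iff there is a cocircuit $C^{*}$ of $\matroid$ with $C^{*} \subseteq (E - e) - B \cup b$ and $b = \min C^{*}$, i.e., iff $b$ is internally active in $\matroid$ \wrt $B''$ \emph{via a cocircuit avoiding $e$}. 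So the only elements whose status can change are those $b \in B$ whose only ``witnessing'' fundamental cocircuit $C^{*}(B''; b)$ in $\matroid$ contains $e$; and conversely the removal of $e$ from the ground set can only turn passive elements active, never the reverse (removing a potential ``blocker'' below $b$). The four cases in the statement are organized by where $e$ sits in the $\STA$-decomposition of $B'$.

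First I would dispose of the two easy cases. If $e \in A' = \IA_{\matroid}(B')$, then $e$ is internally active, so by Proposition \ref{proposition:IAandIPofContractions}(a) we do not have equality there; but more to the point, removing $e$ from $B'$ and from the ground set does not disturb any fundamental cocircuit of the remaining elements in a way that creates new activity — here I would argue that for $b \in B$, the cocircuit $C^{*}(B''; b) = C^{*}(B'; b)$ does not contain $e$ (since $e$, being internally active in $B''$, lies in its own fundamental cocircuit and $B_0$, and one checks $e \notin C^{*}(B'';b)$ for $b \neq e$ using that $e$ is a minimum of a cocircuit), so every element keeps its status: $(S,T,A) = (S', T', A' - e)$. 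If $e \in T' = \IP_{\matroid}(B') \cap B_0$, then $e$ is internally passive, and again the fundamental cocircuits of the other elements are unchanged by deleting $e$ from the ground set of the basis-complement picture; the one subtlety is that $B_0(\cN)$ may differ from $B_0(\matroid)$, but since $e \in B_0(\matroid)$ and $e$ is being contracted, one has $B_0(\cN) = B_0(\matroid) - e$, so $T$ (the passive elements lying in $B_0$) is just $T' - e$ and $S = S'$, $A = A'$.

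The real work is in the two sub-cases where $e \in S' = \IP_{\matroid}(B') - B_0$. Here $e$ is perpetually passive in $B'$, meaning $e \notin B_0$ yet $e$ lies in some basis; since $e \in B' = \minBasis_{\matroid}(I)$, necessarily $e \in I$. Removing $e$ from the ground set can now free up an element $a$ of $C(B_0(\matroid); e) - e$ — specifically $a := \max(C(B_0(\matroid);e) - \{e\})$ — to become internally active in $\cN$, because $e$ was the minimum of the cocircuit $C^{*}(B'; a)$ that previously blocked $a$'s activity, and $a$ is precisely the element of that cocircuit whose fundamental cocircuit had $e$ as its minimum. I expect this computation — showing that $a \in C(B_0(\matroid);e)$, that $a \in B'$, that $a$ becomes active in $\cN$, and that \emph{no other} element changes status — to be the main obstacle, and it will hinge on carefully relating $C^{*}(B';a)$, $C(B_0;e)$, and the orderings. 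Once $a$'s behavior is pinned down, the dichotomy is governed by $T(B';e)$: if $T(B';e) = \emptyset$ then $\minBasis_{\matroid}(e \cup T')$ is already $B_0$-like near $e$ and no provisionally passive element is consumed, giving $(S,T,A) = (S' - e, T', A')$ (with $a$ absorbed back into $A'$, or $a$ was already active); if $T(B';e) \neq \emptyset$ then the element $a$ genuinely migrates from $T'$ into the active set while $e$ leaves $S'$, yielding $(S' - e \cup a, T' - a, A')$. I would verify $a \in T'$ in this last case by noting $a \in C(B_0;e) \subseteq B_0$ combined with $a$ being passive in $B'$, and verify consistency of cardinalities against Corollary \ref{corollary:htOfIntOrder} / the grading of $\poset_{\Int}$ as a sanity check. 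Throughout, the isomorphism $\poset_{\Int}(\cN) \cong$ an interval-type sub-structure of $\poset_{\Int}(\matroid)$ is not needed; everything is local to the single basis $B$.
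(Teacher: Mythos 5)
Your treatment of the two easy cases ($e\in A'$ and $e\in T'$) is essentially sound, but the mechanism you propose for the main case $e\in S'$ is wrong, and it is wrong in a way that would derail the proof. You assert that contracting $e$ can "free up" $a=\max(C(B_{0}(\matroid);e)-\{e\})$ "to become internally active in $\cN$" and that $a$ "migrates from $T'$ into the active set"; but the formula you are trying to prove has $A=A'$ in both subcases, with $a$ joining $S$, not $A$. In fact no element of $B$ changes its active/passive status here at all: since $e\in B'=B''$, for every $b\in B$ the fundamental cocircuit $C^{*}_{\matroid}(B';b)$ avoids $e$, hence by Proposition \ref{proposition:basicContractionFacts}(c) it is a cocircuit of $\cN$ and coincides with $C^{*}_{\cN}(B;b)$, so $\IA_{\cN}(B)=\IA_{\matroid}(B')-e$ and $\IP_{\cN}(B)=\IP_{\matroid}(B')-e$. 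Your guiding principle that removing $e$ "can only turn passive elements active, never the reverse" is the deletion heuristic; under contraction the cocircuits through $e$ disappear, so if anything the opposite could occur --- and in the present situation neither does.

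What actually drives the case analysis, and what your proposal never identifies in the hard case, is the change of the \emph{initial basis}: $B_{0}(\cN)=B_{0}(\matroid)-e$ when $e\in B_{0}(\matroid)$ (covering the cases $e\in A'$ and $e\in T'$), while $B_{0}(\cN)=B_{0}(\matroid)-a$ when $e\notin B_{0}(\matroid)$, i.e.\ when $e\in S'$. Since $S$ and $T$ are defined relative to the initial basis of the ambient matroid, the element $a$ keeps its internally passive status but is reclassified from provisionally passive ($T'$) to perpetually passive ($S$) exactly when $a\in\IP(B')$; if $a\notin\IP(B')$ (or $a\notin B'$) nothing moves and one gets $(S'-e,T',A')$. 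The one genuinely nontrivial verification is then that $a\in\IP(B')$ (equivalently $a\in T(B';e)$) holds if and only if $T(B';e)\neq\emptyset$, which is precisely what separates the last two lines of the statement; this is the step the paper's argument isolates, and your plan replaces it with the false claim that $a$ becomes active, so the proposal as written would not close.
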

	\begin{proof}
		Adopt the notation in the statement of the proposition and let $e \in B'$.
		Evidently $B'=B''$ and so $(S',T',A')=(S'',T'',A'')$.
		So by Proposition \ref{proposition:IAandIPofContractions} we have either 
		\begin{inparaenum}[\upshape (a)]
			\item \label{eq:case1} $A = A'$ and $\IP(B) =\IP(B)-e$, or
			\item \label{eq:case2} $A = A'-e$ and~$\IP(B) =\IP(B)$.
		\end{inparaenum}

		Suppose $e \in B_{0}(\matroid)$ so that $B_{0}(\cN)=B_{0}(\matroid)-e$.
		If \eqref{eq:case1} holds, then $e \in T'$ and direct computations show that $S=S'$, $T=T'-e$, and $A=A'$ in this case.
		For example, we~have
			\begin{align*}
				S &= \IP(B)-B_{0}(\cN)
				\\&= (\IP(B')-\{e\})-(B_{0}(\matroid)-\{e\})
				\\&= (\IP(B')- B_{0}(\matroid)) - \{e\}
				\\&= S'
			\end{align*}
		where the last equation follows since $e \in B_{0}(\matroid)$ and hence $e \notin S'$.
		On the other hand, if \eqref{eq:case2} holds then $e \in A'$ and it follows immediately that $S=S', T=T'$ and~$A=A'-\{e\}$.
		This proves the proposition in case $e \in B_{0}(\matroid)$.

		Now suppose $e \notin B_{0}(\matroid)$.
		Then $e \in B'$ implies that $e \in S'$ and so case \eqref{eq:case1} above holds.
		Moreover, the inital basis of $\cN$ is $B_{0}(\cN) = B_{0}(\matroid) - a$ where $a$ is the maximal element of $C(B_{0}(\matroid);e)- \{e\}$.
		So $A = A'$ and 
			\begin{align*}
				T &= \IP(B)\cap B_{0}(\cN)
				\\&= (\IP(B') - \{e\})\cap (B_{0}(\matroid)-a)
				\\&= (\IP(B') \cap B_{0}(\matroid))- \{e,a\}
				\\&= T' - \{a\}
			\end{align*}
		where the final equation follows since $e \in S'$ implies $e \notin T'$.
		A similar computation shows that $S = (S'-e) \cap(\IP(B') \cap a)$.
		Finally, as $a \in T(B';e)$ if and only if~$T(B';e)\neq \emptyset$, we have $S=S'-e \cup a$ and $T=T'-a$ when $T(B';e)\neq \emptyset$.
		If~$T(B';e)= \emptyset$, then $S=S'-e$ and $T=T'$, as desired.
	\end{proof}


	The preceding lemma gives a case-by-case analysis of the $\STA$-decomposition of~$\minBasis_{\cN}(I)$ when $e \in \minBasis_{\matroid}(I)$.
	The next lemma provides a similar analysis when $e \notin \minBasis_{\matroid}(I)$.


	\begin{lemma}
	\label{proposition:relateSTAnoe}
		Let $\matroid = (E,\bases, \phi)$ be a perfect ordered matroid, let $e \in E$, and let~$\cN=\matroid/e$.
		Fix $I \in \independents(\cN)$ and let $B, B'$ and $B''$ be as above.
		Then if~$e$ is not in $B'$ then $e \in \IP(B'')$ and the decompositions of $B, B'$ and $B''$ are related as follows:~$(S,T,A)$ and $(S'',T'',A'')$ satisfy the relations of Lemma \ref{proposition:relateSTAwe} and
			\[
				(S,T,A) = 
					\begin{cases}
						(S'-\{e\}, T', A'-a') 
							\text{ if $e \in S''$ and $T_{e}= \emptyset$,}
						\\(S' \cup a, T' \cup T_{e}-a, A'-T_{e}-a') 
							\text{  if $e \in S''$ and $T_{e}\neq \emptyset$, and}
						\\ (S', T' \cup \{t \in T_{f} \suchthat t\le e\}, A' - \{t \in T_{f} \suchthat t < e\}) \text{ if $e \in T''$},
					\end{cases}
			\]
		where $T_{e}:= T(B'';e)$, $T_{f}:= T(B'';f)$, $a := \max(C(B_{0}(\matroid);e)-\{e\})$ and $a'$ is the maximal element of $C(B';e)-(I \cup \{e\})$.
	\end{lemma}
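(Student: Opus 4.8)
The plan is to compare the three bases by routing everything through $B''$. Recall first that, by Proposition \ref{proposition:relateMinBas}, the hypothesis $e\notin B'$ forces $B''=B'-a'\cup e$, where $a':=\max\left(C(B';e)-(I\cup\{e\})\right)$ is an internally active element of $B'$; also $B''=B\cup e$ by construction. The scheme has four steps: first prove $e\in\IP(B'')$; second, relate $(S,T,A)$ to $(S'',T'',A'')$ by re-using the argument of Lemma \ref{proposition:relateSTAwe}; third, relate $(S'',T'',A'')$ to $(S',T',A')$ by analyzing the internally active pivot $B'\longleftarrow^{a'}_{e}B''$; and fourth, compose the second and third steps.

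For the first step, observe that $C^*(B'';e)$ is a cocircuit of $\matroid$ contained in $(E-B'')\cup\{e\}$. Since $B''=\minBasis_{\matroid}(I\cup e)\supseteq I$ and $e\notin I$ (otherwise $e\in\minBasis_{\matroid}(I)=B'$), this cocircuit is disjoint from $I$, so it is contained in $E-I$. If $e$ were internally active in $B''$, that is, if $e=\min C^*(B'';e)$, then $e$ would be the minimum element of a cocircuit of $\matroid$ contained in $E-I$, whence $e\in\minBasis_{\matroid}(I)=B'$, contradicting $e\notin B'$. Hence $e\in\IP(B'')$, and only the last three of the four cases of Lemma \ref{proposition:relateSTAwe} can arise.

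For the second step, the proof of Lemma \ref{proposition:relateSTAwe} only uses the relationship between $\minBasis_{\cN}(\cdot)$ and its one-element extension by $e$ to a basis of $\matroid$ — through Proposition \ref{proposition:IAandIPofContractions} and the comparison of $B_0(\cN)$ with $B_0(\matroid)$ — and nowhere uses that this extension equals $\minBasis_{\matroid}(I)$; so the same argument applies verbatim with $B''$ in the role of ``$B'$'', which is precisely the assertion that $(S,T,A)$ and $(S'',T'',A'')$ satisfy the relations of Lemma \ref{proposition:relateSTAwe} (with the auxiliary element $a=\max(C(B_0(\matroid);e)-\{e\})$). For the third step, since $a'$ is internally active in $B'$ and $e\in C^*(B';a')-a'$ (because $B''=B'-a'\cup e$ is a basis and $e\ne a'$), the exchange $B''=B'-a'\cup e$ realizes $B'\longleftarrow^{a'}_{e}B''$, so $B'\intOrder B''$ and $\IP(B')\subseteq\IP(B'')$ by Theorem \ref{theorem:internalOrderIsAPoset}. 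Letting $X$ be the set of elements internally active in $B'$ and internally passive in $B''$, this gives $\IP(B'')=\IP(B')\cup\{e\}\cup X$ and $\IA(B'')=\IA(B')-(\{a'\}\cup X)$ (disjoint unions), and $X\subseteq\IA(B')\subseteq B_0(\matroid)$ by Corollary \ref{corollary:activeInB0}. I would then identify $X$ by comparing the fundamental cocircuits $C^*(B';g)$ and $C^*(B'';g)$ for $g\in\IA(B')-\{a'\}$: such a $g$ keeps its status unless $e\in C^*(B';g)$, equivalently $g\in C(B';e)$, and the maximality defining $a'$ together with the perfection of $B''$ (so that $T''$ is the disjoint union of the parts $T(B'';h)$, $h\in S''$) pins $X$ down as the appropriate initial segment of $T(B'';e)$ when $e\notin B_0(\matroid)$, and of $T(B'';f)$ — for the unique $f\in S''$ with $e\in T(B'';f)$ — when $e\in B_0(\matroid)$. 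Feeding this description of $(S'',T'',A'')$ into the relations from the second step produces the asserted case table.

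The main obstacle is the third step: nailing down $X$. A single internally active pivot need not be a covering relation of the internal order — when $T(B'';e)\ne\emptyset$ the height jumps by more than one — so one cannot simply conclude $\IP(B'')=\IP(B')\cup\{e\}$; instead one must carry out the fundamental-cocircuit bookkeeping around the exchange $a'\leftrightarrow e$ and genuinely exploit perfection to separate $T''$ into its disjoint $f$-parts. A secondary point of care is keeping the two distinct auxiliary elements $a=\max(C(B_0(\matroid);e)-\{e\})$ and $a'=\max(C(B';e)-(I\cup\{e\}))$ apart and tracking how they move between the $S$- and $T$-parts when passing from $B'$ to $B''$ and then from $\matroid$ to $\cN$.
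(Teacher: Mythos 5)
Your skeleton is the same as the paper's: show $e\in\IP(B'')$, obtain the relations between $(S,T,A)$ and $(S'',T'',A'')$ by re-running Lemma \ref{proposition:relateSTAwe} with $B''=B\cup e$ in the role of ``$B'$'', then relate $(S',T',A')$ to $(S'',T'',A'')$ and compose. Your first two steps are sound: the paper instead deduces $e\in\IP(B'')$ from $I\subseteq B'\cap B''$ and $B'$ being lexicographically smaller than $B''$, but your cocircuit argument via the description of $\minBasis_{\matroid}(I)$ works equally well, and your remark that the proof of Lemma \ref{proposition:relateSTAwe} never uses that the one-element extension equals $\minBasis_{\matroid}(I)$ is exactly the justification the paper leaves implicit.

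The genuine gap is your third step, which you yourself flag as the main obstacle: identifying $X=\IA(B')\cap\IP(B'')$ is precisely where internal perfection does its work, and you do not carry it out; moreover your anticipated answer is wrong in the principal case. When $e\in S''$ the lemma forces $X=T_{e}=T(B'';e)$ in its entirety: every element of the $e$-part of $T''$ becomes internally active in $B'=B''-e\cup a'$, while every element of $T(B'';f)$ for $f\in S''-e$ stays passive, yielding $S'=S''-e$, $T'=T''-T_{e}$ and $A'=A''\cup T_{e}\cup a'$; it is not a proper ``initial segment'' of $T_{e}$ cut off by $a'$. Only in the case $e\in T''$ is $X$ a threshold set, namely $\{t\in T_{f}: t<e\}$ for the unique $f\in S''$ with $e\in T_{f}$. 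Your bookkeeping principle (the status of $g$ is unchanged unless $e\in C^{*}(B';g)$, equivalently $g\in C(B';e)$) is correct as far as it goes, but it only identifies which elements \emph{may} change status; the two containments that actually need proof --- each $t\in T_{e}$ is internally active in $B'$, and each $t\in T''-T_{e}$ remains internally passive in $B'$, together with the analogous statements for $T_{f}$ when $e\in T''$ --- are exactly what the paper extracts from $B'\intOrder B''$ combined with the perfection of $B''$, and they are absent from your proposal. As it stands the proposal is a correct reduction plus an unproven (and partly misstated) claim at the crux.
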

	\begin{proof}
		Suppose $e \notin B'$, so that $B' \neq B$.
		Lemma \ref{proposition:relateSTAwe} assures us that 
			
			\begin{equation}
			\label{eq:decomps}
				(S,T,A) = 
				\begin{cases}
					(S'',T'',A''-e) \text{ if $e\in A''$,}
					\\(S'',T''-e,A'') \text{ if $e \in T''$,}
					\\(S''-e,T'',A'') \text{ if $e \in S''$ and $T_{e}=\emptyset$, and}
					\\(S''-e \cup a,T''-a,A'') \text{ if $e \in S''$ and $T_{e}\neq\emptyset$}.
				\end{cases}			
			\end{equation}
		By the minimality of $B''$, it is straightforward to show that  $B'' = B' - \{a'\} \cup \{e\}$ and hence that $B' = B \cup a'$.
		Since $I \subseteq B' \cap B''$ and $B'= \minBasis_{\matroid}(I)$, the basis~$B'$ is lexicographically smaller than $B''$ and hence $e \in \IP(B'') = S'' \smallDisjointUnion T''$.		

		We will prove the lemma by first relating $(S',T',A')$ and $(S'',T'',A'')$ and then applying the appropriate case of \eqref{eq:decomps}.
		Note that, as $B' \intOrder B''$, Theorem \ref{theorem:internalOrderIsAPoset} implies that $\IP(B') \subsetneq \IP(B'')$ and $A'' \subseteq A'$.
		
		To prove the first case of the proposition, let $e \in S''$.
		Then $S' = S'' - \{e\}$ and the perfection of $\matroid$ implies that $T' = T'' - T_{e}$ and $A' = A'' \cup T_{e}$.
		We now apply one of the two last cases of \eqref{eq:decomps} to obtain
			\begin{alignat*}{2}
				S &= S'' - e &&= S',
				\\ T &= T'' &&= T' \cup T_{e}, \text{ and}
				\\ A &= A'' &&= A' - T_{e} - a',
			\end{alignat*}
		proving the lemma in the case when $e \in S''$.

		Now suppose $e \in T''$.
		The prefection of $\matroid$ implies that there is a unique $f \in S''$ such that $e \in T_{f}$.
		It follows that in this case $S' = S''$, $T'= T'' - \{t \in T_{f} \suchthat t \le e\}$, and $A' = A'' \cup \{t \in T_{f} \suchthat t < e\}$.
		Using these facts and the second case of \eqref{eq:decomps}, we obtain $S = S'' = S'$, 
			\begin{alignat*}{2}
				T &= T'' - e &&=T' \cup \{t \in T_{f} \suchthat t \le e\} \text{ and}
				\\A &= A'' &&= A' - \{t \in T_{f} \suchthat t < e\},
			\end{alignat*}
		as desired.
	\end{proof}



	\begin{theorem}
	\label{theorem:perfectClosedUnderContractions}
		The class of perfect ordered matroids is closed under contraction.
	\end{theorem}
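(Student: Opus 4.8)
The goal is to prove that if $\matroid = (E,\bases,\phi)$ is an internally perfect ordered matroid and $e \in E$, then every basis of $\cN := \matroid/e$ is internally perfect. The cases in which $e$ is a loop or a coloop, and the case $|\bases(\matroid)| = 1$, have already been settled (the latter by Proposition \ref{proposition:perfectBasesExist}), so the plan is to fix an arbitrary basis $B = S^{T}_{A}$ of $\cN$ and verify the defining identity $T = \disjointUnion_{f \in S} T(B;f)$.

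First I would set up a lift of $B$ to $\matroid$. Since every basis of $\matroid$ lying over a basis of $\cN$ contains $e$, and since restriction to $E \setminus e$ preserves the lexicographic order, the map $B^{*} \mapsto B^{*} \setminus e$ is a lexicographic isomorphism from the set of bases of $\matroid$ containing $e$ onto $\bases(\cN)$; by Proposition \ref{proposition:IAandIPofContractions} and Theorem \ref{theorem:internalOrderIsAPoset} it is also an isomorphism of internal orders, and in particular $\minBasis_{\matroid}(I \cup e) = \minBasis_{\cN}(I) \cup e$ for every $I \in \independents(\cN)$. Applying this with $I := \IP_{\cN}(B) = S \cup T$, so that $\minBasis_{\cN}(I) = B$ by Proposition \ref{proposition:minBases}, gives $B'' := B \cup e = \minBasis_{\matroid}(I \cup e)$. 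Hence Lemmas \ref{proposition:relateSTAwe} and \ref{proposition:relateSTAnoe} apply with this $I$, and (whether or not $e$ lies in $\minBasis_{\matroid}(I)$) they express the $\STA$-decomposition $(S,T,A)$ of $B$ in $\cN$ through one of the four alternatives of Lemma \ref{proposition:relateSTAwe} relative to the $\STA$-decomposition $(S'',T'',A'')$ of $B''$ in $\matroid$: either $e \in A''$ and $(S,T,A) = (S'',T'',A'' \setminus e)$; or $e \in T''$ and $(S,T,A) = (S'',T'' \setminus e,A'')$; or $e \in S''$ with $T(B'';e) = \emptyset$ and $(S,T,A) = (S'' \setminus e,T'',A'')$; or $e \in S''$ with $T(B'';e) \neq \emptyset$ and $(S,T,A) = (S'' \setminus e \cup a, T'' \setminus a, A'')$, where $a := \max(C(B_{0}(\matroid);e) \setminus e) \in T(B'';e)$. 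In every case $T \subseteq T''$. Because $\matroid$ is internally perfect, $B''$ is an internally perfect basis, so $T'' = \disjointUnion_{h \in S''} T(B'';h)$; in particular each of $e$ and $a$ lies in $T(B'';h)$ for at most one $h \in S''$.

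Next I would pass to the principal bases that control the $f$-parts. For $f \in S$ set $B_{f} := \minBasis_{\cN}(\{f\} \cup T)$; by Proposition \ref{proposition:minBases} this is $f$-principal in $\cN$ with $T(B_{f}) = T(B;f)$, and by Proposition \ref{proposition:fPartIsMax} it is the largest $f$-principal basis of $\cN$ with $B_{f} \intOrder B$. Lifting, $B_{f} \cup e = \minBasis_{\matroid}(\{f\} \cup T \cup e) \intOrder B''$, and Lemmas \ref{proposition:relateSTAwe} and \ref{proposition:relateSTAnoe} applied to $\{f\} \cup T$ again match the $\STA$-decomposition of $B_{f}$ in $\cN$ with that of $B_{f} \cup e$ in $\matroid$ via the same four alternatives. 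The key point to extract is that $B_{f} \cup e$ sits at or below the $f$-component $\minBasis_{\matroid}(\{f\} \cup T'')$ of $B''$ (and, in the single exceptional case $f = a$ of the fourth alternative above, at or below the $e$-component $\minBasis_{\matroid}(\{e\} \cup T'')$); this follows from Proposition \ref{proposition:fPartIsMax} in $\matroid$, the fact that $B_{f} \cup e$ is internally perfect by Proposition \ref{proposition:perfectionFiltersDown} (it lies below the perfect basis $B''$), and the internal-order compatibility of the lift. Feeding this back through Theorem \ref{theorem:internalOrderIsAPoset} and Proposition \ref{proposition:IAandIPofContractions} yields $T(B;f) \subseteq T(B'';f)$ for $f \neq a$ and $T(B;a) \subseteq T(B'';e)$, whence the sets $T(B;f)$, $f \in S$, are pairwise disjoint because the sets $T(B'';h)$, $h \in S''$, are.

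It remains to prove that $\disjointUnion_{f \in S} T(B;f)$ exhausts $T$. One inclusion is automatic, since $B_{f} \intOrder B$ forces $T(B;f) = T(B_{f}) \subseteq T(B) = T$. For the other, let $t \in T$; then $t \in T'' = \disjointUnion_{h \in S''} T(B'';h)$, so $t \in T(B'';h_{0})$ for a unique $h_{0} \in S''$, and running through the four alternatives shows that this $h_{0}$ in fact lies in $S$ (in the fourth alternative $a \in S$ takes over the bookkeeping role played by $e$ in $B''$, so the only $h_{0}$ one could lose is $e$ itself, which is excluded since it would force $t \in T(B'';e) = \emptyset$ in the third alternative or $t \in T(B;a)$ in the fourth). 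A refinement of the comparison in the previous paragraph then shows $t \in T(B;h_{0})$, completing the proof that $B$, and hence $\cN$, is internally perfect. I expect the main difficulty to be exactly this last refinement, and more specifically the case $e \notin B_{0}(\matroid)$: there the initial basis of $\cN$ is $B_{0}(\matroid) \setminus a$ rather than $B_{0}(\matroid)$, and the lift of a principal basis of $\cN$ need not be principal in $\matroid$, so identifying $T(B;f)$ precisely requires carefully tracking the three competing correction elements of Lemmas \ref{proposition:relateSTAwe} and \ref{proposition:relateSTAnoe} — $e$ itself, the element $a = \max(C(B_{0}(\matroid);e) \setminus e)$, and the element $a'$ of Proposition \ref{proposition:relateMinBas} — through both rounds of the case analysis, and checking that they migrate consistently between the $\STA$-decomposition of $B$ and those of the $B_{f}$.
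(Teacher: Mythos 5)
Your strategy is the same one the paper uses: lift a basis $B=S^{T}_{A}$ of $\cN=\matroid/e$ to $B''=B\cup e$, invoke the perfection of $B''$ in $\matroid$, and push the disjoint-union property through the four alternatives of Lemmas \ref{proposition:relateSTAwe} and \ref{proposition:relateSTAnoe}. The skeleton and even the claimed containments ($T(B;f)\subseteq T(B'';f)$ for $f\neq a$, $T(B;a)\subseteq T(B'';e)$) agree with what the paper ultimately computes. But the step that carries the whole proof is missing: both the disjointness containments and, above all, the coverage statement that every $t\in T$ lies in some $T(B;h_{0})$ with $h_{0}\in S$ are asserted (``the key point to extract is\dots'', ``a refinement of the comparison then shows\dots'') rather than proved, and you flag this yourself as the main difficulty. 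That deferred ``refinement'' is essentially the entire proof in the paper: there Lemmas \ref{proposition:relateSTAwe} and \ref{proposition:relateSTAnoe} are applied a second time, to the sets $\{f\}\cup T'$ for each $f$ (and, in the hardest case $e\in S'$ with $T(B';e)\neq\emptyset$, to $T(B';e)$ itself and to $\{f\}\cup T(B';f)\cup e$), to compute each $f$-part exactly, e.g.\ $T(B;a)=T(B';e)-a$ and $T(B;f)=T(B';f)$ for $f\in S'-e$.

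Moreover, the one concrete mechanism you offer for the key containment fails as stated. You claim $B_{f}\cup e=\minBasis_{\matroid}(\{f\}\cup T\cup e)$ ``sits at or below'' $\minBasis_{\matroid}(\{f\}\cup T'')$ in $\poset_{\Int}(\matroid)$, citing Proposition \ref{proposition:fPartIsMax}. That proposition compares $f$-principal bases of $\matroid$, and $B_{f}\cup e$ is in general not $f$-principal in $\matroid$: whenever $e\notin B_{0}(\matroid)$ the element $e$ is perpetually passive in every basis of $\matroid$ containing it, so $e\in S(B_{f}\cup e)$. Worse, in exactly those cases (the third and fourth alternatives, where $e\in S''$) the asserted order relation is false, not merely unjustified: $T''\subseteq B_{0}(\matroid)$ and $e\notin B_{0}(\matroid)$ force $e\notin\minBasis_{\matroid}(\{f\}\cup T'')$, while $e\in\IP(B_{f}\cup e)$, so Theorem \ref{theorem:internalOrderIsAPoset}\ref{eq:ipContainment} rules out $B_{f}\cup e\intOrder\minBasis_{\matroid}(\{f\}\cup T'')$. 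The intended conclusions about the $f$-parts are still true, but they must be obtained the way the paper does it --- by the second round of case analysis in $\cN$ tracking $e$, $a=\max(C(B_{0}(\matroid);e)-e)$ and $a'$ --- not by comparing the lifted principal bases inside $\poset_{\Int}(\matroid)$. Until that bookkeeping is actually carried out, the proof is incomplete.
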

	\begin{proof}
		Let $B = S^{T}_{A} \in \bases(\cN)$.
		Then $B' = S'^{T'}_{A'} := B \cup e \in \bases(\matroid)$ is perfect and so 
			\[B' = \bigvee_{f \in S'}\minBasis_{\matroid}(f \cup T)\]
		and this is the unique way to write $B'$ as the join of principal bases.
		We now consider separately the three cases determined by which set of the $\STA$-decomposition of $B'$ contains $e$.
		
		If $e \in A'$, then $e \in A(\minBasis_{\matroid}(f \cup T'))$ for all $f \in S'$.
		It follows immediately that $\minBasis_{\cN}(f \cup T') = \minBasis_{\matroid}(f\cup T')-e$.
		Thus the first case of Lemma \ref{proposition:relateSTAwe} applies and we obtain $S = S'$ and $T(B;f) = T(B',f)$ for all $f \in S$.
		So
			\[T = T'
				= \disjointUnion_{f \in S'}T(B';f)
				=\disjointUnion_{f \in S}T(B;f)		\]
		shows that $B$ is perfect when $e \in A'$.

		Now suppose $e \in T'$.
		Then applying the second case of Lemma \ref{proposition:relateSTAwe} yields $S = S'$, $T = T'-e$ and $A = A'$.
		By the perfection of $\matroid$, there is a unique $g \in S'$ such that $e \in T(B';g)$.
		Note that $T(B;g) = T(B';g)-e$ and, for all $f \in S'-g$ the element $e$ is active in $\minBasis_{\matroid}(f \cup T')$.
		It follows that $T(B;f)=T(B';f)$ for all~$f \in S'-g$ and so
			\begin{align*}
				T &= T' - e
				\\&= \disjointUnion_{f \in S'}T(B';f)
				\\&= (T(B;g)-e) \smallDisjointUnion \disjointUnion_{f \in S'-g}T(B;g)
				\\&=\disjointUnion_{f \in S}T(B;g),				
			\end{align*}
		and so $B$ is perfect when $e \in T'$.

		Suppose $e \in S'$.
		We deal with the cases $T(B';e)=\emptyset$ and $T(B';e)\neq\emptyset$ separately.
		If $T(B';e) = \emptyset$, then $S = S'-e$ and $T = T'$.
		Moreover, for all $f \in S'-e$ we have 
			\begin{align*}
				T(\minBasis_{\matroid}(f \cup T')) &= T(\minBasis_{\matroid}(f \cup T))
				\\&= T(\minBasis_{\cN}(f \cup T)),				
			\end{align*}
		where the last equation holds by the first case of Lemma \ref{proposition:relateSTAnoe}.
		Now we see that
			\begin{align*}
				T = T'&= \disjointUnion_{f \in S'}T(B';f)
				\\&= \disjointUnion_{f \in S'-e}T(B';f)
				\\&= \disjointUnion_{f \in S}T(B;f)
			\end{align*}
		as desired.

		Finally suppose $e \in S'$ and that $T(B';e)\neq \emptyset$.
		Then $S = S' - e \cup a$ and $T = T'-a$.
		The perfection of $B'$ implies that $a \in T(B;f)$ \Iff $f=e$.
		For any $f \in S'$, write $B'_{f} = \minBasis_{\matroid}(f \cup T')$.
		When $f=e$, we have $a \in T(B';f) = T(B'_{f};f)$ and~$T(B';f)\in \independents(\cN)$.
		So in this case we have 
			\begin{align*}
			S(\minBasis_{\cN}(T(B';f))) &= a, and
			\\ T(\minBasis_{\cN}(T(B';f)))&=T(B';f)-a.
			\end{align*}
		When $f \in S'-e$, let $B''_{f} := \minBasis_{\matroid}(f \cup T(B';f) \cup e)$.
		Applying the first case of Lemma \ref{proposition:relateSTAnoe} gives 
			\begin{alignat*}{2}
				S(\minBasis_{\cN}(f \cup T(B';f))) &= S(B'_{f}) &&= \{f\},
				\\T(\minBasis_{\cN}(f \cup T(B';f))) &= T(B'_{f}) &&= T(B';f), and
				\\A(\minBasis_{\cN}(f \cup T(B';f))) &= A(B'_{f})-a'_{f} &&
			\end{alignat*}
		where $a'_{f}$ is the maximal element of $C(B'_{f};e)-(F \cup T(B;f) \cup e)$.
		We now have all of the ingredients necessary to show that $B$ is perfect when $e \in S'$:
			\begin{align*}
				T &= T' - a 
				\\&= \disjointUnion_{f \in S'} T(B';f) - a
				\\&= (T(B';e)-a) \smallDisjointUnion \disjointUnion_{f \in S'-e} T(B';f)
				\\&= T(B;a) \smallDisjointUnion \disjointUnion_{f \in S-a} T(B;f)
				\\&=\disjointUnion_{f \in S}T(B;f).
			\end{align*}
	\end{proof}

Combining Corollary \ref{corollary:delIsPerfect} and Theorem \ref{theorem:perfectClosedUnderContractions} proves Theorem \ref{theorem:minorClosedIfDelOffB0}.
We saw in Example \ref{example:notMinorClosed} that if we wish to preserve the ordering of the ground set of a perfect matroid~$\matroid$ when passing to a minor $\cN$, then in general we may only delete or contract elements of $B_{0}$ that are coloops.
If, on the other hand, we allow a reordering of the ground set of $\cN$, then we have found no example of a minor of a perfect matroid that is not perfect.
Thus we close by offering the following conjecture that would follow directly from our results here and Conjecture \ref{conjecture:unorderedDelAlwaysPerfect}.

\begin{conjecture}
\label{conjecture:minorClosed}
	The family of internally perfect unordered matroids is minor-closed.
\end{conjecture}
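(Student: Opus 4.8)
The plan is to derive Conjecture~\ref{conjecture:minorClosed} from the results of this section together with Conjecture~\ref{conjecture:unorderedDelAlwaysPerfect}, exactly as anticipated above. Every minor of a matroid is obtained from it by a finite sequence of single-element deletions and single-element contractions, so it suffices to show that the single-element deletion $\matroid\setminus e$ and the single-element contraction $\matroid/e$ of an internally perfect \emph{unordered} matroid $\matroid$ are again internally perfect. The general case then follows by induction on the number of operations: each operation produces an internally perfect unordered matroid, on which the next operation acts, and since deletion and contraction at disjoint elements commute we may if we like perform all contractions first and all deletions afterward.

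For contractions the argument is immediate and unconditional. Given an internally perfect unordered matroid $\matroid = (E,\bases)$, choose a linear ordering $\phi$ of $E$ for which the ordered matroid $(\matroid,\phi)$ is internally perfect. By Theorem~\ref{theorem:perfectClosedUnderContractions}, the ordered matroid $(\matroid/e,\phi|_{E-e})$ is internally perfect, and hence $\matroid/e$ is internally perfect as an unordered matroid. Thus the family of internally perfect unordered matroids is closed under contraction with no appeal to any conjecture.

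For deletions one splits into two cases according to whether $e$ lies in the initial basis $B_{0}$ determined by the chosen good ordering $\phi$. If $e\notin B_{0}$, then Corollary~\ref{corollary:delIsPerfect} shows that $(\matroid\setminus e,\phi|_{E-e})$ is internally perfect, so $\matroid\setminus e$ is internally perfect as an unordered matroid. If $e\in B_{0}$, Example~\ref{example:notMinorClosed} shows that the induced ordering need not witness internal perfection, and one must instead exhibit a \emph{different} linear ordering of $E-e$ making $\matroid\setminus e$ internally perfect; this is precisely the content of Conjecture~\ref{conjecture:unorderedDelAlwaysPerfect}. Granting that conjecture, closure under deletion follows, and combining it with the contraction case completes the proof of Conjecture~\ref{conjecture:minorClosed}.

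The main obstacle is therefore Conjecture~\ref{conjecture:unorderedDelAlwaysPerfect}: given a perfect ordered matroid $\matroid$ and an element $e\in B_{0}(\matroid)$, one must produce a reordering of $E-e$ that restores perfection of $\matroid\setminus e$. In Example~\ref{example:notMinorClosed} this succeeded because $e$ was parallel to an element lying outside $B_{0}$, so that transposing the two in the order effectively moved the deleted element off the initial basis and reduced matters to Corollary~\ref{corollary:delIsPerfect}; but no such parallel element need exist in general. A promising line of attack is to analyze, via the $\STA$-decomposition machinery of Section~\ref{section:perfectAbundantDeficient}, how the initial basis and the $f$-parts of each $T$ transform under an order-transposition, and to argue that the ``defect'' created by removing an element of $B_{0}$ can always be pushed off a suitably chosen new initial basis---for instance by promoting in the order a well-chosen element $a\in C(B_{0};e)-e$, the element that plays the role of $a$ in Lemmas~\ref{proposition:relateSTAwe} and~\ref{proposition:relateSTAnoe}. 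Making such an argument robust, rather than relying on ad hoc features such as parallelism, is the crux; at present only the small-rank cases $r(\matroid)\le 4$ have been verified, by exhaustive computation over all orderings.
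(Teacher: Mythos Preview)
Your proposal is correct and matches the paper's own treatment precisely. The paper does not prove Conjecture~\ref{conjecture:minorClosed}; it merely offers it as a conjecture, noting in the sentence immediately preceding it that the statement ``would follow directly from our results here and Conjecture~\ref{conjecture:unorderedDelAlwaysPerfect}.'' Your write-up spells out exactly this reduction---unconditional closure under contraction via Theorem~\ref{theorem:perfectClosedUnderContractions}, closure under deletion off $B_0$ via Corollary~\ref{corollary:delIsPerfect}, and the remaining case of deletion at a non-coloop element of $B_0$ handled by appeal to Conjecture~\ref{conjecture:unorderedDelAlwaysPerfect}---and you correctly identify that last conjecture as the genuine obstacle that remains open. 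Your additional speculation about promoting an element of $C(B_0;e)-e$ in the order is not in the paper, but it is a reasonable heuristic and you rightly flag it as unproven.
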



\section{Internally Perfect Matroids and Stanley's Conjecture}
\label{section:internallyPerfectMatroids}

	As mentioned in the introduction, our study of internally perfect matroids is motivated by the desire to prove Stanley's Conjecture using the internal order of a matroid as follows:
	Let $\matroid = (E, \bases, \phi)$ be an ordered matroid and let $\poset$ be the internal order of $\matroid$ less the artificial top element $\hat{1}$.
	Write $F = E - (B_{0}(\matroid) \cup \cL(\matroid)$ and let $\cS$ be the monoid over $\bbN$ generated by $F$, that is,
		\[\cS := \bigoplus_{f \in F} \bbN \bfe_{f}.\]
	The dominance relation on $\cS$ is defined by$\bfu \preccurlyeq_{\dom} \bfv$ if $\bfv_{f}-\bfu_{f}\ge 0$ for all $f \in F$.
	Call a map $\mu: \poset \to (\cS, \preccurlyeq_{\dom})$ \defn{valid} if $\mu$ is a height-preserving map whose image is an order ideal of $(\cS, \preccurlyeq_{\dom})$.
	Then since $\poset$ is pure and the number of bases of $\matroid$ at height $i$ in $\poset$ equals $h_{i}(\matroid)$, the existence of a valid map $\mu$ implies that $\matroid$ satisfies Stanley's Conjecture.

	The main result of this article is that internally perfect matroids satisfy Stanley's Conjecture:
	\begin{theorem}
		\label{theorem:perfectImpliesStanley}
		Given an unordered matroid $\matroid$, if there exists an ordering of the ground set that makes $\matroid$ into an internally perfect matroid then $\matroid$ satisfies Stanley's Conjecture.
	\end{theorem}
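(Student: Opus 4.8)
The plan is to prove Stanley's Conjecture for $\matroid$ by fixing an ordering $\phi$ that makes $\matroid$ internally perfect and then exhibiting an explicit \emph{valid} map $\mu\colon\poset\to(\cS,\preccurlyeq_{\dom})$; as explained in the paragraph preceding the theorem, such a map forces $h(\matroid)$ to be a pure $\cO$-sequence. The map I would use sends a basis $B=S^{T}_{A}$ to
\[
  \mu(B):=\sum_{f\in S}\bigl(1+|T(B;f)|\bigr)\,\bfe_{f}\in\cS.
\]
That $\mu$ is well defined is immediate: each $f\in S=\IP(B)-B_{0}$ lies in a basis, so is not a loop, and lies outside $B_{0}$, whence $S\subseteq F$. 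Since $B$ is perfect, $T=\disjointUnion_{f\in S}T(B;f)$, so the coordinate sum of $\mu(B)$ equals $|S|+|T|=|\IP(B)|=\height(B)$ by Theorem~\ref{theorem:internalOrderFacts}; thus $\mu$ is height-preserving.

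Next I would check injectivity. From $\mu(B)$ one reads off $S=\supp\mu(B)$, and for each $f\in S$ Proposition~\ref{proposition:principalBasesGiveChains} tells us the $f$-principal bases form a saturated (hence graded) chain, containing one basis of each height $1,2,\dots$ starting from the clean basis $\minBasis(\{f\})$; the $f$-principal basis of height $\mu(B)_{f}=1+|T(B;f)|$ is exactly $\minBasis(\{f\}\cup T)$ by Propositions~\ref{proposition:minBases} and~\ref{proposition:fPartIsMax}. Then $B=\bigvee_{f\in S}\minBasis(\{f\}\cup T)$ is the unique decomposition of a perfect basis from Proposition~\ref{proposition:joinsOfProjections}, so $B$ is recovered from $\mu(B)$.

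The crux is showing that $\image(\mu)$ is an order ideal. Fix a basis $B$ and $\bfv\preccurlyeq_{\dom}\mu(B)$. For $f\in\supp\bfv$ one has $1\le\bfv_{f}\le 1+|T(B;f)|$, so the $f$-principal chain contains a unique basis $P_{f}$ of height $\bfv_{f}$, with $P_{f}\intOrder\minBasis(\{f\}\cup T)\intOrder B$. Put $B':=\bigvee_{f\in\supp\bfv}P_{f}$. Since $\IP(P_{f})\subseteq\IP(B)$ for every $f$, the union $\bigcup_{f}\IP(P_{f})$ is independent, so by Theorem~\ref{theorem:internalOrderFacts} the join $B'$ is an honest basis, $B'=\minBasis\bigl(\bigcup_{f}\IP(P_{f})\bigr)$, and $B'$ is perfect. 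Writing this as $I\cup J$ with $I=\bigcup_{f}\IP(P_{f})$ and $J\subseteq B_{0}$, and using $\IP(P_{f})=\{f\}\cup T(P_{f})$ with $T(P_{f})\subseteq B_{0}$, one gets $S(B')=\supp\bfv$; the uniqueness clause of Proposition~\ref{proposition:joinsOfProjections} then forces $P_{f}=\minBasis(\{f\}\cup T(B'))$, so $|T(B';f)|=|T(P_{f})|=\bfv_{f}-1$ and hence $\mu(B')=\bfv$. Thus every $\bfv\preccurlyeq_{\dom}\mu(B)$ lies in $\image(\mu)$.

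Finally I would confirm purity and conclude. The elements of $\image(\mu)$ of largest coordinate sum are those of sum $r-c$, the common height of the coatoms of $\widehat{\poset}_{\Int}$ (Corollary~\ref{corollary:htOfIntOrder}); and if $\mu(B)$ has smaller coordinate sum, then $B$ is covered in $\widehat{\poset}_{\Int}$ by a basis $B''$, and a short case split—either $S(B'')=S\cup\{x\}$ with $T(B'')=T$ and $T(B'';x)=\emptyset$, or $S(B'')=S$ with $T(B'')=T\cup\{x\}$—together with the perfection of $B''$ gives $\mu(B'')=\mu(B)+\bfe_{g}$ for a suitable $g$, so $\mu(B'')\succcurlyeq_{\dom}\mu(B)$. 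Hence no element of $\image(\mu)$ below the top level is maximal, so $\image(\mu)$ is a pure order ideal; since $\mu$ is a height-preserving bijection onto it and $\poset$ has $h_{i}(\matroid)$ bases at height $i$, the $\cO$-sequence of $\image(\mu)$ is $h(\matroid)$, which is therefore a pure $\cO$-sequence. The main obstacle is the order-ideal step: the real work is verifying that the candidate $B'=\bigvee_{f}P_{f}$ has the predicted $\STA$-decomposition, and this is exactly where one needs the full hypothesis that \emph{every} basis of $\matroid$—not just $B$—is internally perfect, via the unique decomposability of perfect bases and the containment $J\subseteq B_{0}$.
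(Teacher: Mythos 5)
Your proposal is correct and, at the top level, it is the paper's construction: your map $\mu(B)=\sum_{f\in S}\bigl(1+|T(B;f)|\bigr)\bfe_{f}$ coincides with the paper's $\mu(B)=\sum_{f\in S}\bigl|\{f\}\cup T(B;f)\bigr|\,\bfe_{f}$, and your injectivity argument (at most one $f$-principal basis per height on a principal chain, then unique decomposability) is exactly Lemma \ref{lemma:muIsInjective}. Where you genuinely diverge is in the proof that the image is a pure order ideal (Theorem \ref{theorem:muGivesOrderIdeal}). The paper only checks closure under single coordinate decrements, and does so by combining injectivity with Proposition \ref{proposition:characterizationsByCovers} (a perfect basis covers exactly $|S|$ bases), so that the covered bases must realize all vectors $\mu(B)-\bfe_{f}$; it then gets purity by citing gradedness of the internal order (Theorem \ref{theorem:internalOrderFacts}). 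You never use Proposition \ref{proposition:characterizationsByCovers}: instead, for an arbitrary $\bfv\preccurlyeq_{\dom}\mu(B)$ you build an explicit preimage $B'=\bigvee_{f}P_{f}$ by selecting on each $f$-principal chain the basis of height $\bfv_{f}$ (Proposition \ref{proposition:principalBasesGiveChains}), and you pin down its $\STA$-decomposition via the uniqueness clause of Proposition \ref{proposition:joinsOfProjections}; you also verify purity by hand, exhibiting for each basis of height below $r-c$ a covering basis whose image dominates $\mu(B)$ (using Corollary \ref{corollary:htOfIntOrder}). The trade-off: the paper's route is shorter once the cover-counting proposition is available, while yours is more self-contained at this step, produces an explicit inverse on the whole downset (making the isomorphism onto the order ideal transparent), and replaces the paper's rather terse purity claim with a concrete argument. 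Both arguments are sound and rest on the same structural inputs (principal chains and unique decomposability of perfect bases).
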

	In order to prove that such a matroid satisfies Stanley's Conjecture we need to produce a pure order ideal whose $\cO$-sequence equals the $h$-vector of the matroid.
	We will prove the theorem via a sequence of lemmas that actually imply a stronger result: the internal order of an internally perfect matroid is isomorphic to a pure order ideal.

	For the remainder of this section we fix a rank-$r$ internally perfect matroid $\matroid$ on the ground set $E$.
	If $e \in E$ is a loop in $\matroid$, then $e$ does not occur in any basis.
	This in turn implies that the $h$-vector of $\matroid - \{e\}$ equals the $h$-vector of $\matroid$.
	Therefore, we assume without loss of generality that $\matroid$ contains no loops.
	The first step is to define the monoid in which we will construct the appropriate order ideal.
	Let $B_{0}$ be the lexicographically-least basis of $\matroid$ and write $F := E - B_{0}$.
	For each $f \in F$ fix a generator $\bfe_{f}$ and let 
		\[\cS := \bigoplus_{f \in F}\bbN \bfe_{f}.\]
	As $|F| = h_{1}(\matroid)$ we have $\cS \isomorphic \bbN^{h_{1}}$.

	Recall that, by Theorem \ref{theorem:internalOrderFacts}, there are exactly $h_{i}(\matroid)$ bases at height $i$ in the internal order of $\matroid$.
	Thus a natural next step is to define a map that sends the bases of $\matroid$ to elements of $\cS$ in such a way that the coordinate sum of the image of each basis equals the cardinality of its set of internally passive elements.
	Since the matroid $\matroid$ is internally perfect, Proposition \ref{proposition:joinsOfProjections} guarantees that each basis~$B = S^{T}_{A}$ of $\matroid$ can be written uniquely as the join of principal $f$-bases 
	\[\bigvee_{f \in S} \minBasis(f \cup T(B;f))\]
	where $S \subseteq E - B_{0}$ and the set $\{T(B;f)\suchthat f \in S\}$ is a partition of $T$.
	It is therefore natural to consider the map $\mu: \bases(\matroid) \to \cS$ defined by
		\[\mu(B) = \sum_{f \in S(B)} \big|\{f\} \cup T(B;f)\big|\bfe_{f}.\]
	It is evident that for any basis $B$ the coordinate sum of $\mu(B)$ is equal to the number of internally passive elements of $B$, and hence $\mu$ is height-preserving.

	The first lemma we will prove is that $\mu$ injective.
	\begin{lemma}
		\label{lemma:muIsInjective}
		Let $\matroid$ be an internally perfect matroid and let $\mu: \bases(\matroid) \to \cS$ be as above.
		Then $\mu$ is injective.
	\end{lemma}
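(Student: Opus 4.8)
The plan is to show that $B$ can be reconstructed from $\mu(B)$, which yields injectivity at once. The underlying idea is that $\mu(B)$ records exactly the collection of principal bases occurring in the unique decomposition of $B$ guaranteed, via internal perfection, by Proposition \ref{proposition:joinsOfProjections}.

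First I would read $S(B)$ and the cardinalities $|T(B;f)|$ off of $\mu(B)$. Since $\mu(B) = \sum_{f \in S(B)} \big|\{f\} \cup T(B;f)\big|\,\bfe_{f}$ and every coefficient $\big|\{f\} \cup T(B;f)\big|$ is at least $1$, the support of $\mu(B)$ in the free monoid $\cS = \bigoplus_{f \in F}\bbN\bfe_{f}$ is precisely $S(B)$, and the $\bfe_{f}$-coordinate of $\mu(B)$ equals $1 + |T(B;f)|$. Consequently, if $\mu(B_{1}) = \mu(B_{2})$, then --- writing $S_{i} = S(B_{i})$ and $T_{i} = T(B_{i})$ --- we obtain $S_{1} = S_{2} =: S$ and $|T(B_{1};f)| = |T(B_{2};f)|$ for every $f \in S$.

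The main step is to deduce from $|T(B_{1};f)| = |T(B_{2};f)|$ that the $f$-principal bases $\minBasis(f \cup T_{1})$ and $\minBasis(f \cup T_{2})$ coincide. Here I would use that $\IP\big(\minBasis(f \cup T_{i})\big) = \{f\} \cup T(B_{i};f)$, together with the fact that a minimal basis is recovered from its internally passive set (noted just before Proposition \ref{proposition:minBases}), so that $\minBasis(f \cup T_{i}) = \minBasis\big(f \cup T(B_{i};f)\big)$ is genuinely an $f$-principal basis. By Proposition \ref{proposition:principalBasesGiveChains} the $f$-principal bases form a saturated chain in $\poset_{\Int}(\matroid)$, and by Theorem \ref{theorem:internalOrderFacts} the height of a basis in the internal order is its number of internally passive elements; two elements of a chain with equal height are equal, so $\minBasis(f \cup T_{1}) = \minBasis(f \cup T_{2})$ for all $f \in S$.

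Finally, I would invoke internal perfection once more: by Proposition \ref{proposition:joinsOfProjections} each $B_{i}$ is uniquely decomposable, hence $B_{i} = \bigvee_{f \in S}\minBasis(f \cup T_{i})$, and since the joinands agree for $i = 1,2$ we conclude $B_{1} = B_{2}$. I do not anticipate a genuine obstacle; the argument is essentially bookkeeping built on the observation that principal bases carrying a fixed label $f$ are totally ordered by height. The one point meriting care is the identification $\minBasis(f \cup T) = \minBasis\big(f \cup T(B;f)\big)$, which rests on the equality $B' = \minBasis(\IP(B'))$; I would also confirm that $\mu$ is well defined (this is exactly where perfection enters, through uniqueness of the decomposition) before running the reconstruction.
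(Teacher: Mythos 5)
Your proposal is correct and follows essentially the same route as the paper: read $S$ and the cardinalities $|T(B;f)|$ off $\mu(B)$, use the fact that the $f$-principal bases form a chain (so at most one occurs at each height) to identify the principal bases $\minBasis(f\cup T(B_1;f))=\minBasis(f\cup T(B_2;f))$, and then invoke the unique decomposition of perfect bases from Proposition \ref{proposition:joinsOfProjections} to conclude $B_1=B_2$. Your extra care about the identification $\minBasis(f\cup T)=\minBasis\bigl(f\cup T(B;f)\bigr)$ via $B'=\minBasis(\IP(B'))$ is a point the paper leaves implicit, but it is the same argument.
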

	\begin{proof}
		Suppose there are two bases,~$B = S^{T}_{A} $ and~$ B' = S'^{T'}_{A'}$, of an internally perfect matroid $\matroid$ such that~$\mu(B) = \mu(B')$.
		Then by the definition of $\mu$ we have $S = S'$ and, for all $f \in S$, the cardinalities of the sets $T(B;f)$ and $T'(B';f)$ are equal.
		As the set of all principal $f$-bases forms a maximal chain in $\poset_{\Int}$, there can be at most one prinicpal $f$-basis at any height.
		This implies that 
		\[\minBasis(\{f\} \cup T(B;f)) = \minBasis(\{f\} \cup T'(B';f))\]
		for all $f \in F$.
		Since $\matroid$ is internally perfect, so are both $B$ and $B'$. 
		So applying Proposition \ref{proposition:joinsOfProjections} for perfect bases, we see that
			\begin{align*}
				B &= \bigvee_{f \in S} \minBasis(f \cup T(B;f))
				\\&= \bigvee_{f \in S'} \minBasis(f \cup T'(B';f))
				\\&= B',
			\end{align*}
		which shows $\mu$ is injective.
	\end{proof}

	We will now prove that if $\matroid$ is an internally perfect matroid, then $\matroid$ satisfies Stanley's Conjecture by showing that the image of $\mu$ is a pure order ideal in $\cS$. 
	Given a matroid $\matroid$, we call an order ideal $\cO \subset \cS(\matroid)$ \defn{valid} for $\matroid$ if $\cO$ is a pure order ideal whose $\cO$-sequence equals the $h$-vector of $\matroid$. 	

	\begin{theorem}
		\label{theorem:muGivesOrderIdeal}
		Let $\matroid$ be an internally perfect matroid and let $\mu: \bases(\matroid) \to \cS$ be as above.
		Then the image of $\mu$ is a valid order ideal for $\matroid$.
	\end{theorem}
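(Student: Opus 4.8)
The plan is to show two things: first that the image $\mu(\bases(\matroid))$ is an order ideal of $(\cS,\preccurlyeq_{\dom})$, and second that it is pure, i.e. that its maximal elements all have the same coordinate sum. Once both are established, Lemma \ref{lemma:muIsInjective} together with the fact (Theorem \ref{theorem:internalOrderFacts}) that $\matroid$ has exactly $h_{i}(\matroid)$ bases at height $i$ shows immediately that the $\cO$-sequence of $\mu(\bases(\matroid))$ equals $h(\matroid)$, so the order ideal is valid.

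For the order-ideal property, I would take a basis $B = S^{T}_{A}$ with $\mu(B) = \sum_{f\in S}\big|\{f\}\cup T(B;f)\big|\bfe_{f}$ and an element $\bfv \preccurlyeq_{\dom}\mu(B)$, and produce a basis $B''$ with $\mu(B'') = \bfv$. Writing $\bfv = \sum_{f\in S}c_{f}\bfe_{f}$ with $1\le c_{f}\le \big|\{f\}\cup T(B;f)\big|$ on the support of $\bfv$ (and $c_{f}=0$ off it), the idea is to truncate each principal chain: for each $f$ in the support of $\bfv$, Proposition \ref{proposition:principalBasesGiveChains} says the $f$-principal bases form a saturated chain, so there is a unique $f$-principal basis $P_{f}$ at height $c_{f}$, and moreover $P_{f}\intOrder \minBasis(f\cup T(B;f))$ so that $T(P_{f})\subseteq T(B;f)$. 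Then set $B'' := \bigvee_{f\in\supp(\bfv)}P_{f}$. Using the join formula of Theorem \ref{theorem:internalOrderFacts} and the fact that the sets $T(B;f)$ (hence the $T(P_{f})$) are pairwise disjoint because $B$ is perfect, the union $\bigcup_{f}\intPassive(P_{f}) = \supp(\bfv)\sqcup\bigsqcup_{f}T(P_{f})$ is an independent set contained in $\intPassive(B)$, so the join is not $\hat 1$ and $\intPassive(B'') = \supp(\bfv)\sqcup\bigsqcup_{f}T(P_{f})$. One then checks that $S(B'')=\supp(\bfv)$, that $T(B'';f)=T(P_{f})$ for each such $f$ (this is where I would lean on Proposition \ref{proposition:fPartIsMax}, since $P_{f}$ is an $f$-principal basis below $B''$ and I must see it is the maximal one), and hence $\mu(B'')=\bfv$. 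Since $\matroid$ is perfect, $B''$ is perfect too, so this computation of $\mu(B'')$ via the unique principal decomposition is legitimate.

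For purity, the maximal elements of $\mu(\bases(\matroid))$ correspond under the bijection $\mu$ to the bases $B$ that are maximal in $\poset_{\Int}(\matroid)$, i.e. the coatoms below $\hat 1$: indeed if $B\intOrder B'$ then $\mu(B)\preccurlyeq_{\dom}\mu(B')$ (a shorter truncation dominates a longer one, as in the previous paragraph), and conversely any strict domination $\mu(B)\precneqq_{\dom}\mu(B')$ with both in the image forces $B\precneqq_{\Int} B'$. By Corollary \ref{corollary:htOfIntOrder} every such maximal basis has exactly $r-c$ internally passive elements, where $c$ is the number of coloops of $\matroid$; since the coordinate sum of $\mu(B)$ equals $|\intPassive(B)|$, all maximal elements of the image have coordinate sum $r-c$, so the order ideal is pure.

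The main obstacle I anticipate is the bookkeeping in the order-ideal step — specifically, verifying that the truncated join $B''$ really has $S(B'')=\supp(\bfv)$ and $T(B'';f)=T(P_{f})$, rather than some element of $T(P_{f})$ migrating into $S$ or an active element sneaking in. This is exactly the place where perfection of $\matroid$ is essential (it guarantees the $T(B;f)$ are disjoint and that no abundance/deficiency obstructs reading off the decomposition from the principal pieces), and it should follow cleanly from Propositions \ref{proposition:minBases}, \ref{proposition:fPartIsMax}, and \ref{proposition:joinsOfProjections}, but it is the step requiring the most care.
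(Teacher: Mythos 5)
Your plan is correct, and its overall architecture matches the paper's (show the image of $\mu$ is a pure order ideal, then use Lemma \ref{lemma:muIsInjective}, height-preservation, and Theorem \ref{theorem:internalOrderFacts} to identify the $\cO$-sequence with $h(\matroid)$), but you handle the order-ideal step by a genuinely different mechanism. The paper argues locally: since a perfect basis $B=S^{T}_{A}$ covers exactly $|S|$ bases (Proposition \ref{proposition:characterizationsByCovers}) and $\mu$ is injective, the bases covered by $B$ account for the vectors $\mu(B)-\bfe_{f}$, $f\in S$, so the image is closed under the one-step-down moves and hence is an order ideal. You instead construct, for an arbitrary $\bfv\preccurlyeq_{\dom}\mu(B)$, an explicit preimage: truncate each $f$-principal chain at height $c_{f}$ (Proposition \ref{proposition:principalBasesGiveChains} guarantees the truncation $P_{f}$ exists and satisfies $P_{f}\intOrder\minBasis(f\cup T(B;f))$, so $T(P_{f})\subseteq T(B;f)$), and take $B''=\bigvee_{f\in\supp(\bfv)}P_{f}$; the identification $\IP(B'')=\supp(\bfv)\sqcup\bigsqcup_{f}T(P_{f})$ follows from Theorem \ref{theorem:internalOrderIsAPoset}\ref{eq:ipContainment} (each $P_{f}\intOrder B''$) together with the fact that elements of $\minBasis(I)$ outside $I$ are internally active, and the equality $T(B'';f)=T(P_{f})$ follows from Proposition \ref{proposition:fPartIsMax} plus the disjoint-cover property of the perfect basis $B''$, exactly as you anticipate. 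Your route is longer on bookkeeping but more robust: it produces a preimage of every dominated vector directly, rather than relying on the cover count and a counting/injectivity argument, and in particular it makes the needed monotonicity of $\mu$ (used again in your purity argument) explicit via the principal chains. For purity the paper simply invokes gradedness of the internal order, while you argue through maximal bases and Corollary \ref{corollary:htOfIntOrder}; these are essentially the same content, and only the forward implication ($B\intOrder B'$ implies $\mu(B)\preccurlyeq_{\dom}\mu(B')$) of your claimed correspondence is actually needed there.
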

	\begin{proof}
		To see that the image of $\mu$ is an order ideal we need to check that for any basis $B = S^{T}_{A}$ of $\matroid$ and any $\bfe_{f}$ in the support of $\mu(B)$, the vector $\mu(B)-\bfe_{f}$ is in the image of $\mu$.
		Note that for any element $f \in F$ the generator $\bfe_{f}$ is in the support of $\mu(B)$ \Iff the element $f \in S$.
		Since $B$ is a perfect basis it covers $|S|$ bases in the internal order by Proposition \ref{proposition:characterizationsByCovers}.
		By Lemma \ref{lemma:muIsInjective}, the map~$\mu$ is injective, so the $|S|$ bases covered by $B$ in $\poset_{\Int}$ get mapped to $|S|$ distinct vectors in $\cS$ with support contained in $S$ whose coordinate sum is one less than that of $\mu(B)$.
		As $\mu(B)$ has support $|S|$, it follows that for every $f \in S$ there is a unique basis $B' \in \bases$ such that $\mu(B') = \mu(B)-\bfe_{f}$.

		Now we check that the image of $\mu$ is a pure order ideal.
		By Theorem \ref{theorem:internalOrderFacts}, the internal order $\widehat{\poset}_{\Int}$ is a graded lattice.
		It follows that $\poset_{\Int}$ (the internal order with the top element removed) is a pure graded poset, and so the image of $\mu$ is a pure order ideal.

		Finally, since $\mu$ is an injective map sending the $h_{i}(\matroid)$ bases in $\poset_{\Int}$ at height~$i$ to vectors with coordinate sum $i$ in $\cS$, it follows that the $h$-vector of $\matroid$ is a pure~$\cO$-sequence completing the proof.
	\end{proof}

	The preceding proof shows that the internal order of a perfect matroid $\matroid$ is isomorphic to the corresponding valid order ideal given by the map $\mu$, and hence that $\matroid$ satisfies Stanley's Conjecture.
	Stanley's Conjecture is known to hold for a number of families of matroids, and in the next section we compare these families to the family of internally perfect matroids.
	In particular, in Example \ref{example:perfectNew} of the next section we provide a perfect matroid that is not in any of the families for which Stanley's Conjecture is known to hold.


\section{Perfect Matroids: Constructions, Examples and Counterexamples}
\label{section:examples}

We now turn to the construction of internally perfect matroids.
We have already seen in Proposition \ref{proposition:perfectBasesExist} that every rank-$2$ matroid is internally perfect.
Other matroids that are trivially internally perfect include the graphic matroid $\matroid(C_n)$ where $C_n$ is the the cycle on $n$ vertices, as well as the $0$- and $1$-sums of perfect matroids.

The smallest graphic matroid that is not internally perfect for any ordering of its ground set is $\matroid(K_4)$.
As $K_{4}$ is planar and self-dual, we see that the cographic matroids are not contained in the set of internally perfect matroids.
On the other hand we now construct an infinite family of nontrivial perfect cographic matroids as follows.
Let $\matroid_{1} = \matroid(C_{2})$ be the cycle matroid on two elements represented by the matrix $M_{1} := [1~1]$ and, for $r >1$, define $\matroid_{r}:= \PE(\PL(\matroid_{r-1},\emptyset), \{2r-2,2r-1\})$ where $\PE$ and $\PL$ denote the principal extension and principal lift, respectively.
It is easy to see that, for all positive integers $r$, the matroid $M_{r}$ is a graphic.  
Indeed, for $r>1$ if we let $G_{r} = (V_{r}, E_{r})$ be defined by $V_{r-1} \cup \{r+1\}$ and $E_{r} = E_{r-1} \cup \{e,f\}$ where $e = (r,r+1)$ and $f= (1,r+1)$, then we have $\matroid(G_{r}) = \matroid_{r}$.
For $r = 1,2,3$, the graphs corresponding to the $\matroid_{r}$ are given in Figure \ref{figure:graphExamples}.
The graph corresponding to $\matroid_{r}$ is planar and self-dual, so $\matroid_{r}$ is also cographic.

\begin{figure}[htbp]
\centering
	\begin{tikzpicture}[-,shorten >=1pt,node distance=1.75cm,
  thick,main node/.style={draw, fill=black, circle, inner sep=0pt}]

  \node[main node] (1) {1};
  \node[main node] (2) [right of=1] {2};
  \node[main node] (3) [right of=2] {3};
  \node[main node] (4) [right of=3] {4};
  \node[main node] (5) [right of=4] {5};
  \node[main node] (6) [right of=5] {6};
  \node[main node] (7) [right of=6] {7};
  \node[main node] (8) [right of=7] {8};
  \node[main node] (9) [right of=8] {9};

  \path[every node/.style={font=\sffamily\small}]
    (1) edge node {1} (2)
    	edge [bend right] node [right] {2} (2)
    (3) edge node {1} (4)
    	edge [bend right] node [right ]{2} (4)
    	edge [bend right] node {4} (5)
    (4) edge node {3} (5)
    (6) edge node {1} (7)
    	edge [bend right] node [right] {2} (7)
    	edge [bend right] node [right] {4} (8)
    	edge [bend right] node [right] {6} (9)
    (7) edge node  {3} (8)
    (8) edge node  {5} (9);
\end{tikzpicture}
\caption{graphs giving rise to $\matroid_{r}$ for $r=1,2,3$}
\label{figure:graphExamples}
\end{figure}

	\begin{proposition}
	\label{proposition:myGraphicsArePerfect}
		For any positive integer $r$, the matroid $M_{r}$ is internally perfect with respect to the natural ordering on its ground set.
	\end{proposition}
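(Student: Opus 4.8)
The plan is to induct on $r$, using throughout the graph realization $\matroid_r = \matroid(G_r)$ recorded just before the statement. The structural input is that $G_r$ is obtained from $G_{r-1}$ by adjoining one new vertex $r+1$ together with the two edges $2r-1 = (r,r+1)$ and $2r = (1,r+1)$, which carry the two largest labels; hence $\{2r-2,2r-1,2r\}$ is a triangle of $G_r$, and the greedy construction of the lexicographically least spanning tree gives $B_0(\matroid_r) = B_0(\matroid_{r-1}) \cup \{2r-1\}$. The base case $r=1$ is immediate: $\matroid_1$ has only the initial basis and one principal basis, both perfect by Proposition \ref{proposition:perfectBasesExist}.

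For the inductive step I would split the bases of $\matroid_r$, i.e.\ the spanning trees $B$ of $G_r$, according to the role of the degree-two vertex $r+1$ in $B$: \textbf{(A)} $r+1$ is a leaf of $B$ joined by $2r-1$; \textbf{(B)} $r+1$ is a leaf of $B$ joined by $2r$; \textbf{(C)} $r+1$ has degree two in $B$. In each case $B$ determines a basis $\widehat B$ of $\matroid_{r-1}$ — namely $B - \{2r-1\}$ in (A), $B - \{2r\}$ in (B), and $(B - \{2r-1,2r\})\cup\{2r-2\}$ in (C) — and in (C) one checks that $\widehat B$ is automatically a spanning tree of $G_{r-1}$ through the maximal element $2r-2$ and that $B\mapsto\widehat B$ is a bijection onto such trees. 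Because fundamental cocircuits of a graphic matroid are edge cuts and the two new edges exceed every element of $E(G_{r-1})$, the $\STA$-decomposition of $B$ can be read off from that of $\widehat B$ by a local computation: in (A), $2r-1$ is a pendant edge of $B$ (hence internally active) and nothing else changes, so $(S(B),T(B)) = (S(\widehat B),T(\widehat B))$ and $A(B) = A(\widehat B)\cup\{2r-1\}$; in (B), $S(B) = S(\widehat B)\cup\{2r\}$, $T(B) = T(\widehat B)$, $A(B) = A(\widehat B)$; in (C), the fundamental cocircuit of $2r-1$ in $B$ contains the edge $2r-2 = (1,r)$, so $2r-1$ is internally passive, and intersecting $\IP(B)$ with $B_0(\matroid_r)$ yields $S(B) = (S(\widehat B)-\{2r-2\})\cup\{2r\}$, $T(B) = T(\widehat B)\cup\{2r-1\}$, $A(B) = A(\widehat B)$.

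The remaining step is to compute the parts $T(B;f)$ and check that they partition $T(B)$, using the inductive hypothesis that the corresponding parts partition $T(\widehat B)$. A short greedy argument shows that for $f \in S(\widehat B)\cap E(G_{r-1})$ one has $\minBasis_{\matroid_r}(\{f\}\cup T(B)) = \minBasis_{\matroid_{r-1}}(\{f\}\cup T(\widehat B))\cup\{2r-1\}$ — the minimal basis attaches $r+1$ by the cheaper of its two edges — whence $T(B;f) = T(\widehat B;f)$ (computed in $\matroid_{r-1}$); this settles (A) outright. Case (B) needs in addition that $T(B;2r) = \emptyset$, which holds because $T(B) = T(\widehat B)$ is disjoint from $\{2r-1\}$, so $\minBasis_{\matroid_r}(\{2r\}\cup T(B)) = B_0(\matroid_r)\setminus\{2r-1\}\cup\{2r\}$ is clean. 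The crux is Case (C), where I must show $T(B;2r) = T(\widehat B;2r-2)\cup\{2r-1\}$. I would prove this by observing $\minBasis_{\matroid_r}(\{2r\}\cup T(B)) = \minBasis_{\matroid_r}(\{2r-1,2r\}\cup T(\widehat B))$ and then running the greedy algorithm to recognise the latter as the Case-(C) basis built from the $(2r-2)$-principal basis $\widehat B_\ast := \minBasis_{\matroid_{r-1}}(\{2r-2\}\cup T(\widehat B))$ of $\matroid_{r-1}$; applying the Case-(C) $\STA$ formula now to $\widehat B_\ast$ gives $T(B;2r) = T(\widehat B_\ast)\cup\{2r-1\} = T(\widehat B;2r-2)\cup\{2r-1\}$. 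Feeding everything back, in each case $\bigsqcup_{f\in S(B)} T(B;f)$ recovers $T(B)$ as a disjoint union, so $B$ is perfect, completing the induction. I expect Case (C) — in particular the identification of $\minBasis_{\matroid_r}(\{2r\}\cup T(B))$ as another Case-(C) basis — to be the main obstacle, with the careful tracking of the pendant element $2r-1$ across the cut computations the other point demanding care.
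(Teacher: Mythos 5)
Your proof is correct and follows essentially the same route as the paper: induction on $r$ with the same three-way case split according to how the new vertex $r+1$ attaches to the spanning tree, transferring the $\STA$-decomposition of $B$ from that of the corresponding basis of $\matroid_{r-1}$. In fact your write-up is more careful than the paper's — you explicitly verify via the greedy/minimal-basis computations that the $f$-parts $T(B;f)$ transfer (which the paper leaves implicit), and in your case (B) you correctly record $2r$ as internally passive (so $2r \in S(B)$ with $T(B;2r)=\emptyset$, consistent with the basis ${8}^{}_{135}$ in Figure \ref{figure:intOrderGoodExample}), whereas the paper's proof loosely asserts that the new element is internally active in that case.
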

	\begin{proof}
		We proceed by induction on $r$ and note that the base case is trivial.
		For $r$ greater than $1$, each basis of $\matroid_{r}$ takes exactly one of the following three forms:
		\begin{enumerate}
            \item \label{eq:type1}$B = B' \cup \{2n-1\}$ where $B'$ is a basis of $\matroid_{r-1}$;
            \item \label{eq:type2}$B = B' \cup \{2n\}$ where $B'$ is a basis of $\matroid_{r-1}$;
            \item \label{eq:type3}$B = B' - \{2n-2\} \cup \{2n-1,2n\}$ where $B'$ is a basis of $\matroid_{r-1}$ such that $2n-2 \in B'$.
        \end{enumerate}
        If $B = B' \cup e$ is a basis of $\matroid_{r}$ of type \eqref{eq:type1} or \eqref{eq:type2} then $e$ is internally active in $B$ and hence $S(B)=S(B')$ and $T(B)=T(B')$.
        It follows that $B$ is perfect.
        Otherwise we have $B = B' - \{2n-2\} \cup \{2n-1,2n\}$ where both $e:= 2n-1$ and $f:=2n$ are internally passive in $B$.
        Moreover $e$ is in the $f$-part of $T$ and in no other $g$-part of~$T$ for $g \in S(B)$.
        It follows that $T = \smallDisjointUnion_{e \in S(B)} T(B;e)$ and so $B$ is internally perfect.
        The result follows.
	\end{proof}

	\begin{example}
  \label{example:perfectNew}
  To illustrate this proposition we give the internal order of $\matroid_{4}$ in Figure~\ref{figure:intOrderGoodExample}.
    The internal order of $\matroid_{3}$ is a subposet of $\poset_{\Int}(\matroid_{4})$ and we have highlighted the corresponding bases in blue.
    Notice that these bases are precisely those containing the element $7$ but not the element $8$, and in every such basis the element~$7$ is an internally active element. \qed
  \end{example}
	\begin{figure}[htbp]
	\centering
	\tikzstyle{every node}=[draw=black, thick, fill = white, rectangle,inner sep=2pt]
	\begin{tikzpicture}[scale=1, vertices/.style={draw, fill=black, circle, inner sep=0pt}]
               \node (0)  at (-0   +0,0) 		    {\scriptsize$\color{blue}{\emptyset^{}_{1357}}$};
               \node (13) at (-15/8+ 0/4,8/5) 	{\scriptsize$\color{blue}{{2}^{}_{357}}$};
               \node (5)  at (-15/8+ 5/4,8/5) 	{\scriptsize$\color{blue}{{4}^{}_{157}}$};
               \node (2)  at (-15/8+10/4,8/5)	  {\scriptsize$\color{blue}{{6}^{}_{137}}$};
               \node (1)  at (-15/8+15/4,8/5) 	{\scriptsize${{8}^{}_{135}}$};
               \node (18) at (-5   + 0/4,16/5) 	{\scriptsize$\color{blue}{{24}^{}_{57}}$};
               \node (15) at (-5   + 5/4,16/5) 	{\scriptsize$\color{blue}{{26}^{}_{37}}$};
               \node (26) at (-5   +10/4,16/5) 	{\scriptsize$\color{blue}{{4}^{3}_{57}}$};
               \node (7)  at (-5   +15/4,16/5) 	{\scriptsize$\color{blue}{{46}^{}_{17}}$};
               \node (10) at (-5   +20/4,16/5) 	{\scriptsize$\color{blue}{{6}^{5}_{17}}$};
               \node (14) at (-5   +25/4,16/5) 	{\scriptsize${{28}^{}_{35}}$};
               \node (6)  at (-5   +30/4,16/5) 	{\scriptsize${{48}^{}_{15}}$};
               \node (3)  at (-5  +35/4,16/5) 	{\scriptsize${{68}^{}_{13}}$};
               \node (4)  at (-5  +40/4,16/5) 	{\scriptsize${{8}^{7}_{13}}$};
               \node (20) at (-55/8+0/4,24/5) 	{\scriptsize$\color{blue}{{246}^{}_{7}}$};
               \node (23) at (-55/8+5/4,24/5) 	{\scriptsize$\color{blue}{{26}^{5}_{7}}$};
               \node (28) at (-55/8+10/4,24/5) 	{\scriptsize$\color{blue}{{46}^{3}_{7}}$};
               \node (31) at (-55/8+15/4,24/5) 	{\scriptsize$\color{blue}{{6}^{35}_{7}}$};
               \node (19) at (-55/8+20/4,24/5) 	{\scriptsize${{248}^{}_{5}}$};
               \node (16) at (-55/8+25/4,24/5) 	{\scriptsize${{268}^{}_{3}}$};
               \node (17) at (-55/8+30/4,24/5) 	{\scriptsize${{28}^{7}_{3}}$};
               \node (8)  at (-55/8+35/4,24/5) 	{\scriptsize${{468}^{}_{1}}$};
               \node (27) at (-55/8+40/4,24/5) 	{\scriptsize${{48}^{3}_{5}}$};
               \node (9)  at (-55/8+45/4,24/5) 	{\scriptsize${{48}^{7}_{1}}$};
               \node (11) at (-55/8+50/4,24/5) 	{\scriptsize${{68}^{5}_{1}}$};
               \node (12) at (-55/8+55/4,24/5) 	{\scriptsize${{8}^{57}_{1}}$};
               \node (21) at (-35/8+0/4,32/5) 	{\scriptsize${{2468}^{}_{}}$};
               \node (22) at (-35/8+5/4,32/5) 	{\scriptsize${{248}^{7}_{}}$};
               \node (24) at (-35/8+10/4,32/5) 	{\scriptsize${{268}^{5}_{}}$};
               \node (25) at (-35/8+15/4,32/5) 	{\scriptsize${{28}^{57}_{}}$};
               \node (29) at (-35/8+20/4,32/5) 	{\scriptsize${{468}^{3}_{}}$};
               \node (30) at (-35/8+25/4,32/5) 	{\scriptsize${{48}^{37}_{}}$};
               \node (32) at (-35/8+30/4,32/5) 	{\scriptsize${{68}^{35}_{}}$};
               \node (33) at (-35/8+35/4,32/5) 	{\scriptsize${{8}^{357}_{}}$};
       \foreach \to/\from in {0/1, 1/3, 1/4, 1/14, 1/6, 2/3, 3/8, 3/16, 3/11, 4/12, 4/9, 4/17,  5/6, 6/8, 6/9, 6/27, 6/19,  7/8, 8/21, 8/29, 9/22, 9/30,  10/11, 11/24, 11/32, 12/25, 12/33, 13/14,  14/16, 14/17, 14/19,  15/16, 16/24, 16/21, 17/25, 17/22, 18/19, 19/21, 19/22, 20/21, 23/24, 26/27, 27/29, 27/30, 28/29, 31/32}
       \draw [-] (\to)--(\from);
       18)15)26)7) 10)
       20 23 28 31
       \foreach \to/\from in {0/2, 0/13, 0/5, 2/10, 2/15, 2/7, 5/26, 5/18, 5/7, 7/20, 7/28, 10/23, 10/31, 13/18, 13/15, 15/20, 15/23, 18/20, 26/28}
       \draw [blue, -] (\to)--(\from);
       \end{tikzpicture}
       \caption{the internal order of $\matroid_{4}$}
       \label{figure:intOrderGoodExample}
	\end{figure}

  Let $\matroid$ be a rank-$r$ matroid on $n$ elements and let $h(\matroid) = (h_{0},h_{1},\dots, h_{s},\dots, h_{r})$ where $h_{s}$ is the last nonzero entry of $h(\matroid)$.
  Then $\matroid$ satisfies Stanley's Conjecture if any of the following hold:
      \begin{enumerate}[]      
      \item \label{property:1} $\matroid^{*}$ is graphic \cite{lopez1997chip},
      \item \label{property:2} $\matroid^{*}$ is transversal \cite{oh2013generalized},      
      \item \label{property:3} $\matroid^{*}$ has no more than $n-r+2$ parallel classes \cite{constantinescu2012generic}, 
      \item \label{property:4} $n\le9$ or $\rank(\matroid^{*}) \le 2$ \cite{deloera2011h},
      \item \label{property:5} $r \le 4$ \cite{klee2014lexicographic},
      \item \label{property:6} $\matroid$ is paving \cite{merino2012structure},
      \item \label{property:7} $\matroid$ is  a truncation \cite{constantinescu2012generic}, or
      \item \label{property:8} $h_{s}\le 5$ \cite{constantinescu2012generic}.      
      \end{enumerate}
  This list represents the state of the art concerning Stanley's Conjecture at the writing of this article.
  For the sake of brevity let us call a matroid $ \matroid $ \defn{interesting} if it satisfies none of the above properties.
  \emph{A priori} it is not clear that there are any interesting internally perfect matroids.
  Thus, to show that Theorem \ref{theorem:perfectImpliesStanley} is of theoretical interest, we must exhibit such a matroid.
  In the next example we describe an interesting, perfect, rank-7 matroid on 10 elements.
  Then we generalize this matroid to obtain an infinite family of interesting matroids and we conjecture that every such matroid is perfect.

  \begin{example}
  \label{example:interestingPerfectMatroid}
  Let  $\cN = \cN(N)$ be the rank-3 ordered vector matroid over $\bbQ$ on 10 elements given by the columns of the matrix      
      \[ N :=
        \makeatletter\c@MaxMatrixCols=12\makeatother
          \kbordermatrix{          
          \empty & e_{1} & e_{2} & e_{3} & e_{4}  & e_{5} & e_{6} & e_{7} & e_{8}& e_{9} & e_{10} \cr
          \empty & 2 & 1 & 3 & 3 & -1 & -1 & 0  & 0  & -1 & -1\cr
          \empty & 1 & 1 & 1 & 1 & 1  &  1 & 1  & 1  & 0  & 0\cr
          \empty & 0 & 0 & 0 & 0 & 0  & 0  & -1 & -1 & 1  & 1\\
          }      
      \]      
    and let $\matroid = \cN^{*}$ be the dual of $\cN$.
  
    We first show that $\matroid$ is interesting and then prove that $\matroid$ is internally perfect.
    The restriction of $\matroid^{*} = \cN$ to $\{e_{1},e_{2},e_{3},e_{5}\}$ is isomorphic to the uniform matroid~$\uniformMatroid{2}{4}$ which shows that $\matroid^{*}$ is not graphic.
    So $\matroid$ does not satisfy Property~\eqref{property:1} above.
    To see that~$\matroid$ doesn't satisfy \eqref{property:2} above note that the restriction of $\matroid^{*}$ to the set~
    $\{e_{5},e_{6},e_{7},e_{8},e_{9},e_{10}\}$ is the matroid obtained from the cycle $C_{3}$ by adding a parallel element to each edge.
    This restriction is not transversal by Theorem 14.3.1 in~\cite{welsh1976matroid}.  
    It is easy to see that transversal matroids are closed under taking restrictions, and it follows that $\matroid^{*}$ is not transversal.
    Also, $\matroid^{*}$ is a rank-three matroid on ten elements with six parallel classes, and so satisfies neither \eqref{property:3} nor \eqref{property:4} above.
  
    Now we show $\matroid$ does not satisfy any of the final four properties listed above.
    As the rank of  $\matroid$ is six, $\matroid$ violates \eqref{property:5}.
    The circuits of $\matroid$ are 
    \[
    \circuits(\matroid) = \{1234,125678,1345678,2345678,12569\overline{10},134569\overline{10},234569\overline{10},789\overline{10}\}
    \]
    where, for example, $789\overline{10}$ refers to the set $\{e_{7},e_{8},e_{9},e_{10}\}$.
    Since $1234$ is a circuit with fewer than $\rank(\matroid)$ elements, $\matroid $ is not paving.
    Moreover, one can show that for any circuit $C$ of $\matroid$ the cardinality of the closure of $C$ is at most 8. 
    So $\matroid$ contains no spanning circuits.
    By Remark 1.12 in \cite{constantinescu2012generic}, this shows that $\matroid$ is not a truncation.
    Finally, the $h$-vector of $\matroid$ is $(1, 3, 6, 10, 13, 15, 14, 6)$ which shows that~$\matroid$ does not satisfy \eqref{property:8} above.
    So $\matroid$ is interesting.

    Now we show that $\matroid$ is internally perfect \wrt the linear order induced by the order of the columns of the matrix $N$.
    By Proposition \ref{proposition:perfectionFiltersDown}, it is enough to check that the maximal bases of $\matroid$ (with respect to the internal order) are perfect.
    Each of these bases is of the form $S^{T}_{A}$ where $S=\{3,7,9\}$ and $A=\emptyset$.
    We give these bases below together with the partition $3^{T(B_{i};3)}7^{T(B_{i};7)}9^{T(B_{i};9)}$:
      \begin{alignat*}{4}
        B_{1} &= {379}^{1256}  &&= 3^{12}7^{56}9^{\emptyset}\\ 
        B_{2} &= {379}^{1258}  &&= 3^{12}7^{\emptyset}9^{58}\\  
        B_{3} &= {379}^{1456}  &&= 3^{\emptyset}7^{1456}9^{\emptyset}\\
        B_{4} &= {379}^{1458}  &&= 3^{\emptyset}7^{\emptyset}9^{1458}\\ 
        B_{5} &= {379}^{2456}  &&= 3^{2}7^{456}9^{\emptyset}\\  
        B_{6} &= {379}^{2458}  &&= 3^{2}7^{\emptyset}9^{458}.
      \end{alignat*}
    For $i \in [6]$, we have $T(B_{i}) = \disjointUnion_{f \in S(B_{i})} T(B_{i};f)$ and so $B_{i}$ is internally perfect.
    It follows that $\matroid$ is internally perfect, as desired. \qed
  \end{example}

  Building on the previous example, we now define an infinite family of interesting matroids.
  For $n \ge 3$, let $G = G_{n,D}$ be a graph consisting of the double cycle $C^{2}_{n}$ (the $n$-cycle with two copies of each edge) together with some subset $D$ of the edges~$\{(1,i) \suchthat i \in [n] \setminus{1}\}$.
  Orient each edge $\{i,j\}$ so that $i \to j$ if $i<j$, and order the set of all edges lexicographically.
  Write $N = N(n,D)$ for the vertex-edge incidence matrix of $G$ and let $N'$ be the $(n \times 4)$-matrix with first two rows given by
    \[
    \begin{pmatrix}
      2 & 1 & 3 & 3\\
      1 & 1 & 1 & 1
    \end{pmatrix}
    \]
  and all other rows having all entries equal to zero.
  Let $\cN = \cN_{n,D}$ be the ordered vector matroid given by the matrix $[N' | N]$ and let $\matroid = \cN^{*}$.
  Note that $\cN$ is a rank-$n$ matroid on $2n + |D| + 4$ elements with $n+|D|+3$ parallel classes.
  Moreover, the matroids $\uniformMatroid{2}{4}$ and $C^{2}_{n}$ can be obtained as restrictions of $\cN$.
  This shows that the matroid~$\matroid = \cN^{*}$ does not satisfy any of the properties \eqref{property:1}-\eqref{property:4} above.
  The matroid~$\matroid$ has rank $n+|D|+4 \ge 6$ and the set $\{e_{1},e_{2},e_{3}\}$ is always a circuit of $\matroid$, and so~$\matroid$ does not satisfy \eqref{property:5} or \eqref{property:6}.
  Moreover, it is not hard to show by induction on $n$ and $|D|$ that $\matroid$ has no spanning circuit and that the last entry of the $h$-vector of $\matroid$ is at least 6.
  Thus $\matroid$ does not satisfy either \eqref{property:7} or~\eqref{property:8}.

  Computer experiments have shown that for $n \le 7$ and any collection of diagonals~$D$ the matroid $\matroid = (\cN_{n,D})^{*}$ is internally perfect. 
  We conjecture that this is always the case.
    \begin{conjecture}
    \label{conjecture:newPerfects}
      For $n \ge 3$, let $\matroid$ be the dual of the ordered matroid $\cN_{n,D}$ defined above. 
      Then $\matroid$ is internally perfect.
    \end{conjecture}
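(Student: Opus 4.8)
The plan is to prove this by induction on the pair $(n,|D|)$, modeled on the inductive argument of Proposition~\ref{proposition:myGraphicsArePerfect} and on the direct verification carried out in Example~\ref{example:interestingPerfectMatroid} (which supplies the base case $(3,\emptyset)$; the finitely many cases $n=3$, $D\neq\emptyset$ can be checked separately or folded into the general step). By Proposition~\ref{proposition:perfectionFiltersDown} it suffices at each stage to show that every coatom of $\poset_{\Int}(\matroid)$ — every basis covered by $\hat{1}$ — is internally perfect, so the argument only ever manipulates maximal bases.

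The first move is to pass to the dual picture. A basis of $\matroid=\cN_{n,D}^{*}$ is the complement of a basis of $\cN:=\cN_{n,D}$; the cocircuits of $\matroid$ are the circuits of $\cN$; and if $\bar B=E\setminus B$ is the basis of $\cN$ complementary to a basis $B$ of $\matroid$, then the fundamental cocircuit $\cocircuit(B;f)$ in $\matroid$ equals the fundamental circuit $\circuit(\bar B;f)$ in $\cN$. Next I would catalogue the circuits of $\cN$. Since $\cN$ is the column matroid of $[N'\mid N]$, with $N$ the incidence matrix of the doubled cycle $C_n^2$ together with the chords of $D$, and with $N'$ contributing a rank-$2$ block that meets the graph part only in the (at most two) columns representing the edge $(1,2)$, the circuits of $\cN$ are of a few restricted types: circuits of the rank-$2$ block on $\{e_1,\dots,e_4\}$ (the triples and the digon $\{e_3,e_4\}$), digons of $C_n^2$, simple cycles of the graph, and paths between vertices $1$ and $2$ completed inside the rank-$2$ block. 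Each of these is supported on an \emph{interval of consecutive vertices} of the $n$-cycle, together with a bounded set of extra elements (block columns, at most one chord). Dually, every cocircuit of $\cN$, hence every fundamental cocircuit $\cocircuit(B;f)$ of $\matroid$, is a generalized cut supported on such an interval. This localized structure of fundamental cocircuits is what drives the proof.

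With this dictionary in place I would pin down the initial basis $B_{0}(\matroid)$ — equivalently the lexicographically largest basis of $\cN$ — and then describe the maximal bases of $\matroid$. Example~\ref{example:interestingPerfectMatroid} indicates the shape of the answer: every maximal basis should have the form $S^{T}_{A}$ with $A=\emptyset$ and $S$ equal to one fixed set (in the example $\{3,7,9\}$) of size $\height\bigl(\poset_{\Int}(\matroid)\bigr)$, containing one designated non-$B_{0}$ element per doubled position (an edge-pair of the cycle, or the block digon $\{e_{3},e_{4}\}$), so that only $T$ varies with $B$. For such a basis, perfection is precisely the statement that the $f$-parts $T(B;f)=T\bigl(\minBasis(f\cup T)\bigr)$, $f\in S$, partition $T$, and here the locality of fundamental cocircuits is decisive: $T(B;f)$ consists exactly of the provisionally passive elements on the interval attached to $f$; distinct $f\in S$ are attached to intervals whose overlap is confined to elements already in $B_{0}$ and internally active, hence not in $T$; and every provisionally passive element lies on at least one such interval. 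The inductive step then amounts to deleting the last doubled edge (a deletion followed by a contraction — the operations governed by Theorems~\ref{theorem:minorClosedIfDelOffB0} and~\ref{theorem:perfectClosedUnderContractions}, but here run in reverse, to build up rather than cut down) and to deleting a chord, in each case matching the coatoms of $\cN_{n,D}^{*}$ with those of the smaller matroid and transporting the $\STA$-decompositions across.

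The step I expect to be the main obstacle is the handling of the chords. Inserting a chord $(1,i)$ can change the fundamental cocircuit — hence the internally active elements — of many bases at once, and the danger is that two $f$-parts $T(B;f)$ and $T(B;g)$ that are disjoint when $D=\emptyset$ acquire a common provisionally passive element once a chord is present. I would meet this by strengthening the inductive hypothesis to record not merely the internal perfection of $\cN_{n,D}^{*}$ but the full list of its maximal bases, their $\STA$-decompositions, and the interval supporting each $T(B;f)$, and then proving that inserting one chord only truncates or extends these intervals at endpoints that lie in $B_{0}$ and are internally active — so that no two of them ever come to overlap and no provisionally passive element is ever dropped. Proving this ``intervals stay disjoint under chord insertion'' statement uniformly in $n$ and $D$ is where the real work lies; granting it, the conjecture follows by combining it with Propositions~\ref{proposition:joinsOfProjections} and~\ref{proposition:characterizationsByCovers} and the reduction to coatoms above.
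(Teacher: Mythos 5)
What you have written is a programme, not a proof, and the statement you are addressing is in fact left open in the paper itself (it is stated as a conjecture, supported only by computer verification for $n\le 7$), so the entire burden of a complete argument falls on you — and the crucial steps are not discharged. Concretely: (i) your description of the coatoms of $\poset_{\Int}(\matroid)$ — that every maximal basis has $A=\emptyset$ and one \emph{fixed} set $S$ consisting of a designated non-$B_{0}$ element per doubled position — is extrapolated from the single case of Example \ref{example:interestingPerfectMatroid} (``should have the form'') and is never established for general $n$ and $D$; Corollary \ref{corollary:htOfIntOrder} gives you $A=\emptyset$ when there are no coloops, but nothing in the paper or in your argument forces $S$ to be the same for all coatoms once chords are present. (ii) The ``locality'' claim — that each $T(B;f)$ is supported on an interval of the cycle attached to $f$, that distinct intervals meet only in internally active elements of $B_{0}$, and that every provisionally passive element lies on some interval — is exactly the content of internal perfection for these matroids; you assert it rather than prove it, and you yourself flag the chord-insertion version of it as ``where the real work lies.'' A plan whose central lemma is acknowledged as unproven is not a proof.

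(iii) The inductive engine is also missing. Theorem \ref{theorem:minorClosedIfDelOffB0} and Theorem \ref{theorem:perfectClosedUnderContractions} only pass perfection \emph{down} to minors (and even then with restrictions on deletions); there is no result that lets you run them ``in reverse'' to conclude that an extension or lift of a perfect matroid is perfect, and the paper's own evidence cuts against any cheap transfer principle: Example \ref{example:notMinorClosed} shows the induced order on a deletion can destroy perfection, and the end of Section \ref{section:examples} shows the class is not closed under duality. So when you add the last doubled edge or insert a chord $(1,i)$, you have no lemma matching the coatoms of $(\cN_{n,D})^{*}$ with those of the smaller dual matroid or transporting $\STA$-decompositions across the extension — fundamental cocircuits of \emph{many} bases change at once, as you note. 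Until you (a) prove the structural description of the coatoms and their $f$-parts for arbitrary $n$ and $D$, and (b) prove the chord-insertion/edge-doubling stability statement (or replace the induction by a direct verification of the coatoms via Proposition \ref{proposition:perfectionFiltersDown} and Proposition \ref{proposition:joinsOfProjections}), the argument does not close; as it stands it reproduces the conjecture, not a proof of it.
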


We now compare the family of internally perfect matroids to each of the families of matroids for which Stanley's Conjecture is known to hold.
We write $\intPerfectMatroids$ for the set of all internally perfect matroids and $\cF_{i}$ for the family of all matroids satisfying property $(i)$ above.
The interesting perfect matroid in Example \ref{example:interestingPerfectMatroid} shows that $\intPerfectMatroids \nsubseteq \cF_{i}$ for all $i \in [8]$.
We will now show that none of the opposite inclusions hold by providing a matroid in each family $\cF_{i}$ that is not internally perfect for any linear order of its ground set.

\begin{example}
\label{example:K4notPerfect}
As noted at the beginning of this section the matroid~$\matroid = \matroid(K_{4})$ is a cographic matroid that is not internally perfect for any linear ordering of the ground set.
One easily verifies that $\matroid$ is a rank-$3$, self-dual, paving matroid on six elements.
Moreover, it satisfies the first condition of property \eqref{property:4}.
It follows that~$\cF_{i} \nsubseteq \intPerfectMatroids$ for $i \in \{1,4,5,6\}$.
\qed
\end{example}

\begin{example}
\label{example:UrnNotPerfect}
Next we consider the rank-$r$ uniform matroid on $n$ elements, $\matroid := \uniformMatroid{r}{n}$.
We will show that if $r>2$ and $n\ge r+1$, that $\matroid$ is not internally perfect with respect to the natural ordering of the ground set, from which it will follow that $\matroid$ is not perfect for any such ordering.
Note that, since $\matroid^{*}$ is also uniform, if $B$ is a basis of $\matroid$ and $e \in B$ then the fundamental cocircuit $C^{*}(B;e) = E - B \cup e$.
It follows that if $B$ and $B'$ are two bases such that $B'= B - e \cup f$ and if $e'\in B \cap B'$, then 
  \begin{equation}
  \label{equation:circuitEq}
    C^{*}(B;e')=C^{*}(B';e')-e \cup f.
  \end{equation}
Now let $B$ be an $f$-principal basis of $\matroid$ and let $g \in [n]-[r]$. 
The basis $B' := B-f \cup g$ is a $g$-principal basis.
Moreover, it is evident from \eqref{equation:circuitEq} that if $e \in B \cap B'$ is internally passive in $B$ if and only if it is internally passive in $B'$.
Thus if we write $B = f^{T}_{A}$, then $B' = g^{T}_{A}$.
It follows that the join of $B$ and $B'$ in $\poset_{\Int}$ is perfect if and only if $T = \emptyset$.
This implies the following result. 

  \begin{proposition}
  \label{proposition:perfectionAndUniform}
  The rank-$r$ uniform matroid on $n$ elements is perfect if and only if $r=2$ or $n = r+1$.
  \end{proposition}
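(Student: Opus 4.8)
The plan is to prove the two implications separately, using throughout that every relabeling of the ground set of $\uniformMatroid{r}{n}$ produces an isomorphic ordered matroid; hence ``internally perfect'' holds either for every linear order or for none, and it is enough to work with the natural order $1<2<\cdots<n$.

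For the ``if'' direction there are two (overlapping) cases. If $r=2$, then $\uniformMatroid{r}{n}$ has no coloops when $n\ge 3$ and exactly one basis when $n=2$, so Proposition \ref{proposition:perfectBasesExist} gives perfection at once. If $n=r+1$, then $B_{0}=\{1,\dots,r\}$ and $E-B_{0}=\{r+1\}$, so every basis $B\ne B_{0}$ is an $r$-subset of $[r+1]$ containing $r+1$; thus $S(B)=B-B_{0}=\{r+1\}$, $B$ is principal, and Proposition \ref{proposition:perfectBasesExist} again shows $B$ is perfect, while $B_{0}$ is perfect trivially.

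For the ``only if'' direction I would assume $r\ge 3$ and $n\ge r+2$ and exhibit a non-perfect basis. Since $\matroid^{*}$ is uniform, $\cocircuit(B;e)=E-B\cup e$ for every basis $B$ and every $e\in B$; from this one reads off that for $e_{0}\in[r]$ and $f\in E-[r]$ the set $[r]-e_{0}\cup f$ equals $\minBasis(\{e_{0}+1,\dots,r\}\cup f)$ and is $f$-principal with provisionally passive set $\{e_{0}+1,\dots,r\}$. Taking $f=r+1$ and $e_{0}=r-1$ gives $B:=\{1,\dots,r-2,r,r+1\}=\minBasis(\{r,r+1\})$ with $S(B)=\{r+1\}$, $T(B)=\{r\}$, and $|\intPassive(B)|=2$. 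Because $n\ge r+2$ we may put $g:=r+2$; the same computation (or the identity \eqref{equation:circuitEq}) shows $B':=B-\{r+1\}\cup\{r+2\}=\minBasis(\{r,r+2\})$ is $g$-principal with $T(B')=\{r\}$. Now $\intPassive(B)\cup\intPassive(B')=\{r,r+1,r+2\}$ has size $3\le r$, so it is independent and $D:=B\join B'=\minBasis(\{r,r+1,r+2\})$ is a genuine basis with $\intPassive(D)=\{r,r+1,r+2\}$, whence $S(D)=\{r+1,r+2\}$ and $T(D)=\{r\}$. Finally, Proposition \ref{proposition:minBases} yields $\minBasis(\{r+1\}\cup T(D))=B$ and $\minBasis(\{r+2\}\cup T(D))=B'$, so $T(D;r+1)=T(B)=\{r\}=T(B')=T(D;r+2)$; the $f$-parts of $T(D)$ therefore cover $T(D)$ but are not disjoint, so $D$ is abundant, hence not perfect. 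Thus $\uniformMatroid{r}{n}$ is not internally perfect for the natural order, and by the symmetry remark it is not internally perfect for any order.

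The only genuine work is the bookkeeping in the ``only if'' step: computing the $\STA$-decompositions of $B$, $B'$, and $D$ from the cocircuit formula, and --- the crucial point --- checking that $B\join B'$ does not collapse to $\hat{1}$, which is exactly where the hypothesis $r\ge 3$ enters, since it is what keeps $\{r,r+1,r+2\}$ independent. I expect this to be straightforward given Theorems \ref{theorem:internalOrderIsAPoset} and \ref{theorem:internalOrderFacts}; the one point requiring care is to isolate $r\ge 3$, $n\ge r+2$ as the genuinely non-perfect regime, the remaining values with $n\ge r+1$ being exactly those handled by the ``if'' direction.
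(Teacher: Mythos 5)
Your proof is correct and takes essentially the same route as the paper: the key mechanism in both is that in a uniform matroid an $f$-principal basis and the $g$-principal basis obtained by swapping $f$ for $g$ (both outside $B_{0}$) have the same provisionally passive set $T$, so whenever $T\neq\emptyset$ and the union of their internally passive sets stays independent, their join is an abundant basis; you simply instantiate this with $T=\{r\}$, $f=r+1$, $g=r+2$. If anything you are slightly more careful than the paper, which glosses over the independence condition and the need for a second element outside $[r]$ (it states its hypothesis as $n\ge r+1$, whereas $n\ge r+2$ is what the argument, and the proposition itself, actually require).
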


In particular, the rank-$3$ uniform matroid $\uniformMatroid{3}{5}$ is not perfect.
It is well-known (and easy to prove) that $(\uniformMatroid{3}{5})^{*} = \uniformMatroid{2}{5}$ is a rank-$2$ transversal matroid.
Moreover, $\uniformMatroid{3}{5}$ is obviously a truncation of $\uniformMatroid{4}{5}$.
Thus $\uniformMatroid{3}{5} \in \cF_{i}$ for $i \in \{2,4,5,6,7\}$ and we have shown that $\cF_{i} \nsubseteq \intPerfectMatroids$  for any such $i$.
\qed
\end{example}


\begin{example}
\label{example:r2n8}
Let $\cN$ be the rank-2 linear matroid on eight elements defined by the matrix 
  \[
    \begin{pmatrix}
      1 & 1 & 0 & 0 &1  &1  & 1   & 1  \\
      0 & 0 & 1 & 1 &-1 &-1  & 1 & 1   
    \end{pmatrix}
  \]
and let $\matroid = \cN^{*}$.
Then $\matroid$ is a rank-6 matroid on eight elements whose dual has four parallel classes.
Moreover, the $h$-vector of $\matroid$ is $\{1,2,3,4,5,6,3\}$.
It follows that $\matroid \in \cF_{i}$ for $i \in \{3,4,8\}$c .
Checking all $8!$ linear orders of the ground set, one can show that $\matroid$ is not internally perfect for any such ordering.
This implies that $\intPerfectMatroids \nsubseteq \cF_{i}$ for $i \in \{3,4,8\}$.
\qed
\end{example}

The previous examples show that none of the families of matroids for which Stanley's Conjecture is known to hold consist entirely of perfect matroids.
Moreover, note that the duals of the matroids $\uniformMatroid{3}{5}$ and $\matroid$ from Example \ref{example:UrnNotPerfect} and  \ref{example:r2n8}, respectively, are rank-$2$ matroids.
This implies that $(\uniformMatroid{3}{5})^{*}$ and $\matroid^{*}$ are internally perfect for any ordering of their respective ground sets, and hence that the set of internally perfect matroids is not closed under matroid duality.

\bibliographystyle{plain}
\bibliography{internalOrderStanley}	

\begin{thebibliography}{10}

\bibitem{bjorner40homology}
Anders Bj{\"o}rner.
\newblock Homology and shellability of matroids and geometric lattices.
\newblock {\em Matroid {A}pplications}, 40:226--283, 1992.

\bibitem{brylawski1992tutte}
Thomas Brylawski and James Oxley.
\newblock The {T}utte polynomial and its applications.
\newblock {\em Matroid {A}pplications}, 40:123--225, 1992.

\bibitem{constantinescu2012generic}
Alexandru Constantinescu, Thomas Kahle, and Matteo Varbaro.
\newblock Generic and special constructions of pure {O}-sequences.
\newblock {\em Bulletin of the London Mathematical Society}, page bdu047, 2014.

\bibitem{constantinescu2012h}
Alexandru Constantinescu and Matteo Varbaro.
\newblock $h$-vectors of matroid complexes.
\newblock {\em {\tt arXiv:1212.3226}}, 2012.

\bibitem{deloera2011h}
Jesus DeLoera, Yvonne Kemper, and Steven Klee.
\newblock $ h $-vectors of small matroid complexes.
\newblock {\em {\tt arXiv:1106.2576}}, 2011.

\bibitem{M2}
Daniel~R. Grayson and Michael~E. Stillman.
\newblock Macaulay2, a software system for research in algebraic geometry.
\newblock Available at \url{http://www.math.uiuc.edu/Macaulay2/}.

\bibitem{katona1987theorem}
Gyula Katona.
\newblock A theorem of finite sets.
\newblock In {\em Classic Papers in Combinatorics}, pages 381--401. Springer,
  1987.

\bibitem{klee2014lexicographic}
Steven Klee and Jose~Alejandro Samper.
\newblock Lexicographic shellability, matroids and pure order ideals.
\newblock {\em {\tt arXiv:1406.1951}}, 2014.

\bibitem{kruskal1963number}
Joseph~B. Kruskal.
\newblock The number of simplices in a complex.
\newblock {\em Mathematical optimization techniques}, page 251, 1963.

\bibitem{lasVergnas2001active}
Michel Las~Vergnas.
\newblock Active orders for matroid bases.
\newblock {\em European Journal of Combinatorics}, 22(5):709--721, 2001.

\bibitem{lopez1997chip}
Criel~Merino L{\'o}pez.
\newblock Chip firing and the {T}utte polynomial.
\newblock {\em Annals of Combinatorics}, 1(1):253--259, 1997.

\bibitem{merino2012structure}
Criel L{\'o}pez~Merino, Steven~D Noble, Marcelino Ram{\'\i}rez-Iba{\~n}ez, and
  Rafael Villarroel-Flores.
\newblock On the structure of the $h$-vector of a paving matroid.
\newblock {\em European Journal of Combinatorics}, 33(8):1787--1799, 2012.

\bibitem{miller2005combinatorial}
Ezra Miller and Bernd Sturmfels.
\newblock {\em Combinatorial {C}ommutative {A}lgebra}, volume 227.
\newblock Springer, 2005.

\bibitem{oh2013generalized}
Suho Oh.
\newblock Generalized permutohedra, $h$-vectors of cotransversal matroids and
  pure o-sequences.
\newblock {\em The Electronic Journal of Combinatorics}, 20(3):P14, 2013.

\bibitem{oxley1992matroid}
James~G. Oxley.
\newblock {\em Matroid {T}heory}, volume 1997.
\newblock Oxford {U}niversity {P}ress, 1992.

\bibitem{schweig2010h}
Jay Schweig.
\newblock On the $h$-vector of a lattice path matroid.
\newblock {\em Electron. J. Combin}, 17(1):N3, 2010.

\bibitem{stanley1977cohen}
Richard~P. Stanley.
\newblock Cohen-{M}acaulay complexes.
\newblock In {\em Higher {C}ombinatorics}, pages 51--62. Springer, 1977.

\bibitem{stanley2011enumerative}
Richard~P. Stanley.
\newblock {\em Enumerative {C}ombinatorics}, volume~1.
\newblock Cambridge {U}niversity {P}ress, 2011.

\bibitem{welsh1976matroid}
Dominic~J.A. Welsh.
\newblock {\em Matroid {T}heory}.
\newblock Academic Press, 1976.

\end{thebibliography}
\end{document}